\DeclareMathAlphabet{\mymathbb}{U}{BOONDOX-ds}{m}{n}
\newcommand{\range}{\operatorname{ran}}
\newcommand{\dom}{\operatorname{dom}}
\newcommand{\frakc}{\mathfrak{c}}
\newcommand{\frakb}{\mathfrak{b}}
\newcommand{\frakd}{\mathfrak{d}}
\newcommand{\Pow}{\mathcal{P}}
\newcommand{\non}{\operatorname{non}}
\newcommand{\cov}{\operatorname{cov}}
\newcommand{\add}{\operatorname{add}}
\newcommand{\nul}{\mathcal{N}}
\newcommand{\meager}{\mathcal{M}}
\newcommand{\scrA}{\mathcal{A}}
\newcommand{\ZFC}{\mathrm{ZFC}}
\newcommand{\frakr}{\mathfrak{r}}
\newcommand{\bI}{\mathfrak{b}_\mathrm{game}^{\mathrm{I}}}
\newcommand{\bII}{\mathfrak{b}_\mathrm{game}^{\mathrm{II}}}
\newcommand{\dI}{\mathfrak{d}_\mathrm{game}^{\mathrm{I}}}
\newcommand{\dII}{\mathfrak{d}_\mathrm{game}^{\mathrm{II}}}
\newcommand{\rI}{\mathfrak{r}_\mathrm{game}^{\mathrm{I}}}
\newcommand{\rII}{\mathfrak{r}_\mathrm{game}^{\mathrm{II}}}
\newcommand{\aNI}{\add(\nul)_\mathrm{game}^{\mathrm{I}}}
\newcommand{\aNII}{\add(\nul)_\mathrm{game}^{\mathrm{II}}}
\newcommand{\bIstar}{\mathfrak{b}_\mathrm{game^\ast}^{\mathrm{I}}}
\newcommand{\bIIstar}{\mathfrak{b}_\mathrm{game^\ast}^{\mathrm{II}}}
\newcommand{\dIstar}{\mathfrak{d}_\mathrm{game^\ast}^{\mathrm{I}}}
\newcommand{\dIIstar}{\mathfrak{d}_\mathrm{game^\ast}^{\mathrm{II}}}
\newcommand{\rIstar}{\mathfrak{r}_\mathrm{game^\ast}^{\mathrm{I}}}
\newcommand{\rIIstar}{\mathfrak{r}_\mathrm{game^\ast}^{\mathrm{II}}}
\newcommand{\aNIstar}{\add(\nul)_\mathrm{game^\ast}^{\mathrm{I}}}
\newcommand{\aNIIstar}{\add(\nul)_\mathrm{game^\ast}^{\mathrm{II}}}
\newcommand{\append}{{}^\frown}
\newcommand{\suc}{\operatorname{succ}}
\newcommand{\idealI}{\mathcal{I}}
\newcommand{\idealJ}{\mathcal{J}}
\newcommand{\zero}{\mymathbb{0}}
\newcommand{\one}{\mymathbb{1}}
\newcommand{\seq}[1]{{\langle#1\rangle}}
\DeclarePairedDelimiter\abs{\lvert}{\rvert}
\DeclarePairedDelimiterX{\norm}[1]{\lVert}{\rVert}{#1}
\theoremstyle{definition}
\newtheorem{thm}{Theorem}[section]
\newtheorem*{thm*}{Theorem}
\newtheorem{defi}[thm]{Definition}
\newtheorem*{defi*}{Definition}
\newtheorem{lem}[thm]{Lemma}
\newtheorem*{lem*}{Lemma}
\newtheorem{fact}[thm]{Fact}
\newtheorem*{fact*}{Fact}
\newtheorem{prop}[thm]{Proposition}
\newtheorem*{rmk*}{Remark}
\newtheorem{cor}[thm]{Corollary}
\newtheorem*{cor*}{Corollary}
\newtheorem*{convention*}{Convention}
\newtheorem{question}{Question}
\title{Game-theoretic variants of cardinal invariants}
\author{Jorge Antonio Cruz Chapital}
\affil{Departament of Mathematics, Univeristy of Toronto, 27 King's College Cir, Toronto, Canada. E-mail: chapi@matmor.unam.mx}
\author{Tatsuya Goto\thanks{Supported by JSPS KAKENHI Grant Number JP22J20021}}
\affil{Graduate School of System Informatics, Kobe University, 1-1 Rokkodai, Nada-ku, 657-8501 Kobe, Japan. E-mail: 202x603x@stu.kobe-u.ac.jp}
\author{Yusuke Hayashi\thanks{Supported by JST SPRING, Japan Grant Number JPMJSP2148}}
\affil{Graduate School of System Informatics, Kobe University, 1-1 Rokkodai, Nada-ku, 657-8501 Kobe, Japan. E-mail: 219x504x@stu.kobe-u.ac.jp}
\date{\today}
\begin{document}
	\maketitle

    \begin{abstract}
        We investigate game-theoretic variants of cardinal invariants of the continuum. The invariants we treat are the reaping number $\mathfrak{r}$, the bounding number $\mathfrak{b}$, the dominating number $\mathfrak{d}$, and the additivity number of the null ideal $\operatorname{add}(\mathsf{null})$.
        We also consider games, called tallness games, defined according to ideals on $\omega$ and characterize that each of Player I and Player II has a winning strategy.
    \end{abstract}

    \section{Introduction}

    The study of cardinal invariants of the continuum is important in set theory of reals. On the other hand, the study of infinite games is also an important topic in set theory.
    We study variants of cardinal invariants using infinite games.
    The invariants we treat are the reaping number $\frakr$, the bounding number $\frakb$, the dominating number $\frakd$, and the additivity number of the null ideal $\add(\nul)$.
    Furthermore, in a forthcoming paper, \cite{cghy}, the authors and T. Yamazoe will consider game-theoretic variants of splitting numbers along the lines of this paper.
    
    Depending on the definition of each cardinal invariant, there are normal versions of games and *-versions of games, and we consider 8 games in total.

    In the normal version, Player II must in each turn say $0$ or $1$. Player II wins if there is a real in the prescribed family and the values of this real at the points where Player II played $1$ have the given  relation to the natural number that Player I played.
    In contrast, in the *-version, Player II must in each turn play a natural number. Player II wins if the real consisting of the play of Player II is in the prescribed family and this real has the given relation to the real consisting of Player I's moves.

    For each game, two cardinal invariants are defined: the minimum size of a family such that Player II has a winning strategy and the minimum size of a family such that Player I has no winning strategy.

    Figure \ref{table:ourresults} summarizes our results.

    {
    \renewcommand{\arraystretch}{1.2}
    \begin{figure}
        \begin{table}[H]
            \begin{center}
            \begin{tabular}{c|cc}
            game & $\mathfrak{x}_\mathrm{game}^{\mathrm{I}}$ & $\mathfrak{x}_\mathrm{game}^{\mathrm{II}}$ \\ \hline
            bounding & $\frakb$ & $\frakd$ \\
            bounding* & $\frakb$ & $\frakc$ \\
            dominating & $\frakd$ & $\frakd$ \\
            dominating* & $\frakd$ & $\frakc$ \\
            reaping & $\max{ \{ \frakr , \frakd \} } \leq {?} \leq \max{ \{ \frakr_{\sigma}, \frakd \} }$ & $\frakc$ \\
            reaping* & $\infty$ & $\infty$ \\
            anti-localizing & $\add(\nul)$ & $\cov(\meager)$ \\
            anti-localizing* & $\add(\nul)$ & $\frakc$
            \end{tabular}
            \end{center}
        \end{table}
        \caption{Our results}\label{table:ourresults}
    \end{figure}
    }

    In addition to investigating cardinal invariants, in Section \ref{sec:tallness}, we study games defined according to ideal on $\omega$ regarding its tallness. Moreover, in Section \ref{sec:twoversions}, we generalize the results in Section \ref{sec:tallness} to show the Definable Ideal Dichotomy.

    Game-theoretic considerations of cardinal invariants can be found in \cite{kada2000more}, \cite{brendle2019construction}, and \cite{meagersetsinfinite} but our approach differs from these.

    In the rest of this section, we fix our notation.

    $(\forall^\infty n)$ and $(\exists^\infty n)$ are abbreviations to say ``for all but finitely many $n$" and ``there exist infinitely many $n$", respectively.

    For $A, B \subseteq \omega$, the relation $A \subseteq^* B$ means that $A \setminus B$ is finite. We say $B$ almost contains $A$ if $A \subseteq^* B$ holds.
    In addition, for $x, y \in \omega^\omega$, the relation $x \le^* y$ means $(\forall^\infty n)(x(n) \le y(n))$. We say $y$ dominates $x$ if $x \le^* y$ holds.
    
    $\zero$ is the set of all eventually zero sequences and $\one$ is that of eventually one sequences.

    $\frakc$ denotes the cardinality of the continuum.
    
    The following is the standard definition of cardinal invariants.

    \begin{defi}
        \begin{enumerate}
            \item $\scrA \subseteq \omega^\omega$ is a dominating family if for every $x \in \omega^\omega$, there is $y \in \scrA$ that dominates $x$. Define the dominating number $\frakd$ by $\frakd = \min \{ \abs{\scrA} : \scrA \subseteq \omega^\omega \text{ a dominating family} \}$.
            \item $\scrA \subseteq \omega^\omega$ is an unbounded family if for every $x \in \omega^\omega$, there is $y \in \scrA$ that is not dominated by $x$. Define the bounding number $\frakb$ by $\frakb = \min \{ \abs{\scrA} : \scrA \subseteq \omega^\omega \text{ an unbounded family} \}$.
            \item For $x \in \Pow(\omega)$ and $y \in [\omega]^\omega$, we say $y$ reaps $x$ if either $y \subseteq^* x$ or $y \subseteq^* \omega \setminus x$ holds. it is equivalent to say $x$ does not split $y$. $\scrA \subseteq [\omega]^\omega$ is a reaping family if for every $x \in \Pow(\omega)$, there is $y \in \scrA$ such that $y$ reaps $x$. Define the reaping number $\frakr$ by $\frakr = \min \{ \abs{\scrA} : \scrA \subseteq [\omega]^\omega \text{ a reaping family} \}$.
            \item $\scrA \subseteq [\omega]^\omega$ is a $\sigma$-reaping family if for every $f \in (\Pow(\omega)^\omega$, there is $y \in \scrA$ such that $y$ reaps $f(n)$ for every $n \in \omega$. Define the $\sigma$-reaping number $\frakr_\sigma$ by $\frakr_\sigma = \min \{ \abs{\scrA} : \scrA \subseteq [\omega]^\omega \text{ a $\sigma$-reaping family} \}$.
            \item $\add(\nul)$ is the minimum cardinality $\kappa$ such that the Lebesgue null ideal is not $\kappa$-additive.
            \item $\cov(\meager)$ is the minimum cardinality $\kappa$ such that the Cantor space $2^\omega$ can be covered by $\kappa$ many meager sets.
        \end{enumerate}
    \end{defi}

    As for the details of these cardinal invariants, see \cite{blass2010combinatorial}.
	
	\section{Bounding games}

    In this section, we consider games related to unbounded families.
 
	Fix a set $\scrA \subseteq \omega^\omega$.
	We call the following game the \textit{bounding game} with respect to $\scrA$:
	
	\begin{table}[H]
		\centering
		\begin{tabular}{l|llllll}
			Player I  & $n_0$ &       & $n_1$ &       & $\dots$ &     \\  \hline
			Player II &       & $i_0$ &       & $i_1$ &         & $\dots$
		\end{tabular}
	\end{table}
	
	Here, $\seq{n_k : k  \in \omega}$ is a sequence of numbers in $\omega$ and $\seq{i_k : k  \in \omega}$ is a sequence of numbers in $2$.
	Player II wins when Player II played $1$ infinitely often and there is $g \in \scrA$ such that 
	\begin{align*}
		\{ k \in \omega :  i_k = 1 \} = \{ k \in \omega : n_k < g(k) \}.
	\end{align*}

	We call the following game the \textit{bounding* game} with respect to $\scrA$:
	
	\begin{table}[H]
		\centering
		\begin{tabular}{l|llllll}
			Player I  & $n_0$ &       & $n_1$ &       & $\dots$ &     \\  \hline
			Player II &       & $m_0$ &       & $m_1$ &         & $\dots$
		\end{tabular}
	\end{table}
 
	Here, $\seq{n_k : k  \in \omega}$ and $\seq{m_k : k  \in \omega}$ are sequences of numbers in $\omega$.
    Player II wins when
	\begin{align*}
        \seq{m_k : k  \in \omega} \in \mathcal{A} \text{ and } (\exists^\infty k)(n_k < m_k).
    \end{align*}

    \begin{defi}\label{defi:bIetc}
    	We define
    	\begin{align*}
    		\bI &= \min \{ \abs{\scrA} : \text{Player I has no winning strategy} \\
            & \hspace{15ex} \text{for the bounding game with respect to } \scrA \}, \\
    		\bII &= \min \{ \abs{\scrA} : \text{Player II has a winning strategy} \\
            & \hspace{15ex} \text{for the bounding game with respect to } \scrA \}, \\
    		\bIstar &= \min \{ \abs{\scrA} : \text{Player I has no winning strategy} \\
            & \hspace{15ex} \text{for the bounding* game with respect to } \scrA \}, \text{ and} \\
    		\bIIstar &= \min \{ \abs{\scrA} : \text{Player II has a winning strategy} \\
            & \hspace{15ex} \text{for the bounding* game with respect to } \scrA \}.
    	\end{align*}
     \end{defi}

    Since the star version is harder for Player II than the non-star version, we have the following inequality.
    \[
    \tikz{
    \node (bI) at (0, 0) {$\bI$};
    \node (bIstar) at (3, 0) {$\bIstar$};
    \node (bII) at (0, 1.5) {$\bII$};
    \node (bIIstar) at (3, 1.5) {$\bIIstar$};
    \node at (1.5, 0) {$\le$};
    \node at (1.5, 1.5) {$\le$};
    \node at (0, 0.75) {\rotatebox{90}{$\le$}};
    \node at (3, 0.75) {\rotatebox{90}{$\le$}};
    }
    \]
 
	\begin{thm}
		$\bI = \frakb$ holds.
	\end{thm}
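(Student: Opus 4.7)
I would prove $\bI = \frakb$ by establishing the two inequalities $\bI \ge \frakb$ and $\bI \le \frakb$, the key idea being that a strategy for Player I can be ``flattened'' to an element of $\omega^\omega$ via taking a maximum over the finite tree of Player II's possible responses.

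For $\bI \ge \frakb$, I take $\scrA \subseteq \omega^\omega$ with $\abs{\scrA} < \frakb$ and build a winning strategy for Player I. By definition of $\frakb$, there is $h \in \omega^\omega$ dominating every member of $\scrA$. I let Player I play $n_k = h(k)$ at each step, ignoring Player II's moves. Then for every $g \in \scrA$ the set $\{k : n_k < g(k)\} = \{k : h(k) < g(k)\}$ is finite, so no matter how Player II plays, the equation $\{k : i_k = 1\} = \{k : n_k < g(k)\}$ combined with Player II playing $1$ infinitely often is impossible. Hence Player I wins, so $\abs{\scrA} \ge \bI$ fails.

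For $\bI \le \frakb$, I fix an unbounded family $\scrA$ of size $\frakb$ and show that Player I has no winning strategy against $\scrA$. Given any strategy $\tau : 2^{<\omega} \to \omega$ for Player I, I define $h_\tau \in \omega^\omega$ by
\[
h_\tau(k) = \max\{\tau(s) : s \in 2^k\},
\]
which is well defined because $2^k$ is finite. The crucial observation is that no matter how Player II plays, Player I's $k$-th move $n_k$ equals $\tau(i_0, \dots, i_{k-1})$ for some sequence in $2^k$, hence $n_k \le h_\tau(k)$. By unboundedness of $\scrA$ there is $g \in \scrA$ with $h_\tau(k) < g(k)$ for infinitely many $k$. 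I now let Player II respond according to $g$: at step $k$, after seeing $n_k$, play $i_k = 1$ iff $n_k < g(k)$. Then by construction $\{k : i_k = 1\} = \{k : n_k < g(k)\}$, and this set contains the infinite set $\{k : h_\tau(k) < g(k)\}$, so Player II plays $1$ infinitely often and wins via $g \in \scrA$. Thus $\tau$ is not winning, so Player I has no winning strategy, and $\bI \le \abs{\scrA} = \frakb$.

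Both directions look routine; the only subtle point is the passage from a strategy $\tau$ to the single function $h_\tau$, exploiting that Player II's alphabet is $\{0,1\}$ so that the tree of possible responses at level $k$ is finite. This is precisely where the non-star version becomes strictly easier for Player II than the star version, and it is also what will break in the $\bIstar$ analysis later.
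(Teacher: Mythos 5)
Your proof is correct. Both directions go through, and the overall shape agrees with the paper's: reduce Player I's strategy to a single function and apply unboundedness of $\scrA$. The implementation of the reduction differs, though, in a way worth noting. The paper bounds only the plays of Player I along eventually-zero response sequences: it uses that $\zero$ is countable to find one $f$ dominating $\seq{\sigma(\bar i\upharpoonright k):k\in\omega}$ for every $\bar i\in\zero$, then defines Player II's play $\bar i$ by the self-referential rule $i_k=1$ iff $\sigma(\bar i\upharpoonright k)<g(k)$ and argues by contradiction that $\bar i\notin\zero$. You instead bound \emph{all} possible moves of Player I at stage $k$ at once by $h_\tau(k)=\max\{\tau(s):s\in 2^k\}$, exploiting the finiteness of $2^k$; this makes the inequality $n_k\le h_\tau(k)$ hold unconditionally, so the honest response $i_k=1$ iff $n_k<g(k)$ immediately yields infinitely many $1$'s with no case split or contradiction argument. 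Your version is more elementary and arguably cleaner here; the paper's ``dominate the countable set of tame plays'' device is the one that survives in settings where the levels are infinite (e.g.\ the anti-localizing game, where Player I's moves range over $[\omega]^{k+1}$ and a pointwise maximum is unavailable), which is presumably why the authors phrase it that way. Your closing remark that the finiteness of Player II's alphabet is exactly what fails for $\bIstar$ is accurate.
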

	\begin{proof}
		That $\bI \ge \frakb$ is easy. 
		We show $\bI \le \frakb$.
		Take an unbounded family $\mathcal{A} \subseteq \omega^\omega$.
		Take Player I's strategy $\sigma \colon 2^{<\omega} \to \omega$.
		We want to show that $\sigma$ is not a winning strategy for the bounding game with respect to $\mathcal{A}$.
		
		Since $\zero$ is a countable set, we can get $f \in \omega^\omega$ that dominates $\seq{\sigma(\bar{i} \upharpoonright k) : k \in \omega}$ for every $\bar{i} \in \zero$.
		Since $\mathcal{A}$ is an unbounded family, we can take $g \in \mathcal{A}$ such that $f$ doesn't dominate $g$.
		We now put $\bar{i} \in 2^\omega$ by
		\[
		i_k = \begin{cases} 1 & \text{(if $\sigma(\bar{i} \upharpoonright k) < g(k))$} \\ 0 & \text{(otherwise)} \end{cases}
		\]
		If $\bar{i} \in \zero$, then $\seq{\sigma(\bar{i} \upharpoonright k) : k \in \omega}$ does not dominate $g$ by the choice of $g$.
		But this fact and the choice of $\bar{i}$ imply $\bar{i} \not \in \zero$. It's a contradiction. So $\bar{i} \not \in \zero$. Therefore $\bar{i}$ is a play of Player II that wins against Player I's strategy $\sigma$.
	\end{proof}
	
	\begin{thm}\label{thm:bII}
		$\bII = \frakd$ holds.
	\end{thm}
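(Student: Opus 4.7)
The plan is to prove both $\bII \leq \frakd$ and $\bII \geq \frakd$ separately.

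For the upper bound $\bII \leq \frakd$, I start with a dominating family $\scrA_0 \subseteq \omega^\omega$ of size $\frakd$ and close under adding natural-number constants, setting $\scrA = \{g + c : g \in \scrA_0, c \in \omega\}$, which still has cardinality $\frakd$. I claim that the trivial Player II strategy of playing $1$ at every round is winning for this $\scrA$. Given any Player I play $\bar{n}$, the dominating property yields $g \in \scrA_0$ with $\bar{n} \leq^* g$, say $n_k \leq g(k)$ for all $k \geq K$. Choosing $c \in \omega$ larger than $\max\{n_k - g(k) + 1 : k < K\}$ ensures $g + c > \bar{n}$ pointwise, so $g + c \in \scrA$ witnesses the set equality $\omega = \{k : i_k = 1\} = \{k : n_k < (g+c)(k)\}$, and Player II wins.

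For the lower bound $\bII \geq \frakd$, assume $\tau$ is a winning Player II strategy for the bounding game with respect to $\scrA$, and the goal is to deduce $|\scrA| \geq \frakd$ by showing that $\scrA$ is a dominating family. Given $f \in \omega^\omega$, I plan to construct a Player I play $\bar{n}$ recursively, using $f$ and $\tau$, such that the resulting witness $g \in \scrA$ satisfies $f \leq^* g$. At stage $k$ with history $s_k = (n_0, \dots, n_{k-1})$, the idea is to seek $n \geq f(k)$ with $\tau(s_k{}\append n) = 1$ and play such an $n$; this forces the eventual witness to satisfy $g(k) > n_k \geq f(k)$, contributing one index towards $g \geq^* f$.

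The main obstacle is handling the ``bad'' stages, where no $n \geq f(k)$ produces $\tau(s_k{}\append n) = 1$, and proving that such stages form a finite set along the constructed play. The plan is to argue that persistence of bad stages forces a pattern in $\tau$ that can be exploited by an alternative Player I play to prevent any $g \in \scrA$ from realizing the set equality, contradicting that $\tau$ is winning; in particular the facts that $\tau$ must produce infinitely many $1$'s on every play and that the equality $\{k : i_k = 1\} = \{k : n_k < g(k)\}$ pins down $g$ at both $1$- and $0$-positions are the key constraints to leverage. This case analysis—showing that the constructed play really does yield a witness dominating $f$, rather than merely exceeding it infinitely often—is the technical heart of the proof and the place where the gap between $\frakb$ (sufficient for Theorem~2.1) and $\frakd$ is closed.
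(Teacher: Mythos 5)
Your upper bound is fine and matches the paper: the paper likewise takes a \emph{totally} dominating family (your closure under adding constants achieves the same thing) and lets Player II play $1$ at every round.

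The lower bound, however, has a genuine gap, and it is not merely that the ``technical heart'' is left unexecuted: the intermediate claim you are aiming for is false. Player II can have a winning strategy with respect to a family $\scrA$ that is \emph{not} a dominating family. For a concrete counterexample, let $\mathcal{D}$ be a totally dominating family of size $\frakd$ and let $\scrA = \{ g_d : d \in \mathcal{D} \}$ where $g_d(2k) = d(k)$ and $g_d(2k+1) = 0$. The strategy ``play $1$ at even rounds, $0$ at odd rounds'' wins for Player II: given Player I's play $\seq{n_k}$, choose $d$ with $d(k) > n_{2k}$ for all $k$; then $\{k : n_k < g_d(k)\}$ is exactly the set of even indices. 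Yet no member of $\scrA$ eventually dominates the constant function $1$. In this example the ``bad'' stages of your recursion (where no response $1$ is available, or where the witness is forced to be small) occur at every odd round, so they cannot be shown to form a finite set, and the witness $g$ you extract will exceed $f$ only on the good coordinates. This is exactly why your plan to ``close the gap between $\frakb$ and $\frakd$'' by proving finiteness of bad stages cannot succeed as stated.

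The paper closes the gap differently, by a case analysis on the game tree $T$ of the winning strategy. In Case 1 (below some node, the strategy's response at each round is eventually constant in Player I's move), one identifies the infinitely many rounds $r_n$ where the forced response is $1$, has Player I play $\max\{m_n, f(n)\}$ there, and concludes that the \emph{derived} family $\scrA' = \{ \seq{g(r_n/2) : n \in \omega} : g \in \scrA \}$ --- obtained by selecting a subsequence of coordinates, not $\scrA$ itself --- is dominating, whence $\abs{\scrA} \ge \frakd$. In Case 2 one builds a perfect subtree of $T$ whose distinct branches yield distinct witnesses, giving $\abs{\scrA} = \frakc \ge \frakd$. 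To repair your argument you would need to replace ``$\scrA$ is dominating'' by a statement about such a derived family, and you would still need something like the paper's Case 2 to handle strategies whose responses are not eventually determined by the size of Player I's move.
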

	\begin{proof}
		We first prove $\bII \le \frakd$.
		Take a dominating family $\scrA \subseteq \omega^\omega$ of $(\omega^\omega, \le)$ (the total domination order).
		Then the strategy that plays $1$ always is a winning strategy for Player II.
		
		We next prove $\frakd \le \bII$.
		Fix $\scrA \subseteq \omega^\omega$ with a winning strategy of Player II for the bounding game with respect to $\scrA$.
		Consider the game tree $T$ decided by the winning strategy. So every node in $T$ of even length has full successor nodes and every node in $T$ of odd length has the only successor node determined by the strategy.
		We first consider the next case:
		\begin{itemize}
			\item (Case 1) There is a $\sigma \in T$ of even length such that for every even number $r \ge \abs{\sigma}$, there is $i \in 2$ such that for all but finitely many $m$, for every $\tau \in T$ extending $\sigma$, we have $[\tau(r) = m \implies \tau(r+1) = i]$.
		\end{itemize}
		Fix a witness $\sigma$ and $\seq{i_r : r \ge \abs{\sigma} \text{ even}}$ for Case 1.
		
		Then we have $(\exists^\infty r)(i_r = 1)$.
		Otherwise, we have $(\forall^\infty r)(i_r = 0)$.
		Then considering an appropriate play of Player I, Player II plays $0$ eventually along the winning strategy. This is a contradiction to the rule of the game.
		
		Consider the increasing enumeration $\{r_n : n \in \omega\}$ of $\{ r \in \omega : i_r = 1\}$.
		For each $n \in \omega$, we have $m_n \in \omega$ satisfying for every $\tau \in T$ extending $\sigma$, we have $[\tau(r_n) \ge m_n \implies \tau(r_n+1) = 1]$.
		Fix $f \in \omega^\omega$.
		Consider the play of Player I that plays $\max\{m_n, f(n)\}$ at stage $r_n / 2$.
		Since Player II wins, there is $g \in \scrA$ such that
		\[
		\max\{m_n, f(n)\} \le g(r_n / 2).
		\]
		So $\scrA' = \{ \seq{g(r_n / 2) : n \in \omega} : g \in \scrA \}$ is a dominating family.
		We have $\abs{\scrA} \ge \frakd$.

		We next consider the next case:
		\begin{itemize}
			\item (Case 2) For every $\sigma \in T$ of even length, there is an even number $r \ge \abs{\sigma}$ such that for every $i \in 2$, there exist infinitely many $m$ and there is $\tau \in T$ extending $\sigma$ such that $[\tau(r) = m \land \tau(r+1) = i]$.
		\end{itemize}
		
		In this case, we can construct a perfect subtree of $T$ and each distinct path of this subtree gives distinct element of $\mathcal{A}$.
		
		In detail, we construct $r_s$, $\sigma_s$ and $m^s_0 < m^s_1$ for $s \in 2^{<^\omega}$ such that $\sigma_{s \append i}(r_s) = m^s_i$, $\sigma_{s \append i}(r_s + 1) = i$ for every $i < 2$.
		For each $f \in 2^\omega$, put $\sigma_f = \bigcup_{n \in \omega} \sigma_{f \upharpoonright n}$.
		Since II wins, we can take $g_f \in \scrA$ that witnesses $\sigma_f$ is a winning play.
		Take distinct $f$ and $f'$ in $2^\omega$.
		Let $\Delta = \min \{ n : f(n) \ne f'(n) \}$ and $s = f \upharpoonright \Delta = f' \upharpoonright \Delta$.
		We may assume that  $f(\Delta) = 0$ and $f'(\Delta) = 1$.
		We have $\sigma_f(r_s) = m^s_0, \sigma_f(r_s+1) = 0, \sigma_{f'}(r_s) = m^s_1$ and $\sigma_{f'}(r_s+1) = 1$.
		Then by the rule of the game, we have
		\[
		g_f(r_s / 2) \le m^s_0 < m^s_1 < g_{f'}(r_s / 2).
		\]
		So we have $g_f \ne g_{f'}$.
		Therefore, we have $\abs{\mathcal{A}} = \frakc$ in this case.
		
		In either case, we have $\abs{\scrA} \ge \frakd$, so we have shown $\bII \ge \frakd$.
	\end{proof}

    Using terminology in \cite[Section 10]{blass2010combinatorial}, $\bIstar$ is equal to the global, adaptive, finite prediction version of the evasion number. Moreover, in the article it was shown that this invariant is equal to $\frakb$. So we have $\bIstar = \frakb$. But for the sake of completeness, we include the proof.

    \begin{thm}
        $\bIstar = \frakb$ holds.
    \end{thm}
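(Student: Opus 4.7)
I plan to prove the two inequalities separately. For $\bIstar \ge \frakb$, if $\abs{\scrA} < \frakb$ then $\scrA$ is $\le^*$-bounded by some $h \in \omega^\omega$, and Player I's non-adaptive strategy playing $h(k)$ at stage $k$ is winning: for any play $\seq{m_k} = g \in \scrA$ of Player II, $m_k \le h(k) = n_k$ holds cofinitely often, so $(\exists^\infty k)(n_k < m_k)$ fails.

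For $\bIstar \le \frakb$, take an unbounded family $\scrA$ with $\abs{\scrA} = \frakb$ and an arbitrary Player I strategy $\sigma \colon \omega^{<\omega} \to \omega$; the goal is to produce $g \in \scrA$ whose play defeats $\sigma$. The main construction is, for each $s \in \omega^{<\omega}$, a local bound $h_s \in \omega^\omega$ defined recursively by $h_s(i) = 0$ for $i < \abs{s}$, $h_s(\abs{s}) = \sigma(s) + 1$, and for $k > \abs{s}$,
\[
h_s(k) = 1 + \max\{\sigma(t) : t \in \omega^k,\ t \upharpoonright \abs{s} = s,\ t(i) \le h_s(i) \text{ for all } \abs{s} \le i < k\}.
\]
The maximum is over a finite set, so $h_s$ is well-defined; its design guarantees the key property that if $g$ extends $s$ and $g(i) \le h_s(i)$ for all $\abs{s} \le i < k$, then $\sigma(g \upharpoonright k) < h_s(k)$. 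Since $\omega^{<\omega}$ is countable, fix $H \in \omega^\omega$ with $h_s \le^* H$ for every $s$, and by unboundedness of $\scrA$ pick $g \in \scrA$ with $g \not\le^* H$. Construct inductively indices $k_0 < k_1 < \cdots$ witnessing $\sigma(g \upharpoonright k_i) < g(k_i)$: set $s_0 = \emptyset$ and $s_i = g \upharpoonright (k_{i-1}+1)$ for $i \ge 1$, and let $k_i$ be the least $k \ge \abs{s_i}$ with $g(k) > h_{s_i}(k)$. Such $k_i$ exists because $g(k) > H(k) \ge h_{s_i}(k)$ for infinitely many large $k$. By minimality, $g(j) \le h_{s_i}(j)$ for all $\abs{s_i} \le j < k_i$, and applying the key property of $h_{s_i}$ yields $\sigma(g \upharpoonright k_i) < h_{s_i}(k_i) < g(k_i)$.

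The main obstacle I anticipate is the bookkeeping of the restart mechanism, namely that a single $g \in \scrA$ which is unbounded above $H$ suffices to escape every tail strategy based at $s_i$ infinitely often. The recursive design of $h_s$ together with $\le^*$-directedness of countable subsets of $\omega^\omega$ is exactly what makes this uniform choice of $H$ adequate; once that is in place, the verifications of the key property and the iteration are routine.
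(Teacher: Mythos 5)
Your proposal is correct and follows essentially the same route as the paper: both define, for each $s \in \omega^{<\omega}$, a local bound controlling the plays that $\sigma$ can capture past $s$ (the paper's $\sigma_s$ bounds the captured plays themselves, your $h_s$ bounds $\sigma$'s responses along them, which is the same finitely-branching observation), dominate these countably many functions by a single $H$, and defeat $\sigma$ with one $g \in \scrA$ unbounded over $H$. The only difference is stylistic: the paper concludes by contradiction from a tail $g \upharpoonright n_0$, while you directly extract infinitely many escape indices $k_i$ via the restart mechanism.
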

    \begin{proof}
        It is clear that $\frakb \le \bIstar$. We show $\bIstar \le \frakb$.

        Take an unbounded family $\mathcal{A}$ of $\omega^\omega$.
        Take an arbitrary strategy $\sigma \colon \omega^{<\omega} \to \omega$ of Player I.
        We have to show that $\sigma$ is not a winning strategy for the bounding* game with respect to $\mathcal{A}$.

        Fix an enumeration $\seq{s_i : i \in \omega}$ of $\omega^{<\omega}$ that satisfies $\abs{s_i} \le i$ for every $i$.
        For each $s \in \omega^{<\omega}$ and $n \in \omega \setminus \abs{s}$, we put
        \[
        \sigma_s(n) = \max \{ x(n) : s \subseteq x \in \omega^\omega \text{ and } (\forall k \ge \abs{s})(x(k) \le \sigma(x \upharpoonright k) \}.
        \]
        It can be easily checked that $\sigma_s(n)$ is in $\omega$.
        We define $f$ by
        \[
        f(n) = \max (\{ \sigma_{s_i}(n) : i < n \} \cup \{ 0 \}).
        \]
        Take $g \in \scrA$ that is not dominated by $f$.
        Consider the play in which Player I obeys the strategy $\sigma$ and Player II plays $g$.
        Suppose that Player I wins. Then there is $n_0 \in \omega$ such that $(\forall n \ge n_0)(g(n) \le \sigma(g \upharpoonright n))$.
        Take $m_0 \in \omega$ such that $s_{m_0} = g \upharpoonright n_0$.
        Then we have for every $m > m_0$:
        \[
        g(m) \le \sigma_{s_{m_0}}(m) \le f(m).
        \]
        This means that $f$ dominates $g$, which is a contradiction. 
    \end{proof}
	
    \begin{thm}\label{thm:bIIstar}
        $\bIIstar = \frakc$ holds.
    \end{thm}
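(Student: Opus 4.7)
The upper bound $\bIIstar \le \frakc$ is trivial: for $\scrA = \omega^\omega$, Player II wins every play by responding $m_k = n_k + 1$. The substance of the theorem is $\bIIstar \ge \frakc$, which I propose to prove by a perfect-tree argument.

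Suppose $\tau$ is a winning strategy for Player II for the bounding* game with respect to some $\scrA$. For $h \in \omega^{<\omega}$, let $\mu_\tau(h)$ denote the finite sequence of Player II's responses when Player I plays $h$ and Player II follows $\tau$. The plan is to construct a Cantor scheme $\seq{\sigma_s : s \in 2^{<\omega}}$ of finite Player I plays such that $\sigma_s \subsetneq \sigma_{s \append i}$ for $i \in 2$, $|\sigma_{s \append 0}| = |\sigma_{s \append 1}|$, and $\mu_\tau(\sigma_{s \append 0}) \ne \mu_\tau(\sigma_{s \append 1})$ for every $s$. For any two distinct $f, g \in 2^\omega$, the associated infinite plays $\bar n_f := \bigcup_n \sigma_{f \upharpoonright n}$ and $\bar n_g$ produce Player II responses that extend the two distinct finite sequences at the first split of $f, g$, and hence are themselves distinct. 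Since $\tau$ is winning, each such response lies in $\scrA$, giving an injection $2^\omega \hookrightarrow \scrA$ and therefore $|\scrA| \ge \frakc$.

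The crux is the following splitting lemma: for every $\sigma \in \omega^{<\omega}$ there exist $h_0, h_1 \supseteq \sigma$ with $|h_0| = |h_1|$ and $\mu_\tau(h_0) \ne \mu_\tau(h_1)$. If it fails at some $\sigma$, then $\mu_\tau(h)$ depends only on $|h|$ for $h \supseteq \sigma$, so there is a fixed sequence $\tilde m(k)$ for $k \ge |\sigma|$ with the property that Player II, following $\tau$, plays $\tilde m(k)$ at stage $k$ regardless of Player I's choices past stage $|\sigma|$. Player I can then play $\sigma$ for the first $|\sigma|$ stages and $n_k := \tilde m(k) + 1$ thereafter, ensuring $n_k > m_k$ for every $k \ge |\sigma|$. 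This violates $(\exists^\infty k)(n_k < m_k)$ and contradicts that $\tau$ is winning.

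The main obstacle is precisely this splitting lemma: its proof is where the winning condition $(\exists^\infty k)(n_k < m_k)$ is genuinely used, forbidding Player II from using a strategy whose responses past some stage are independent of Player I's tail moves. Once the lemma is in hand, the Cantor scheme is constructed by a routine induction, picking $\sigma_{s \append 0}$ and $\sigma_{s \append 1}$ as equal-length extensions of $\sigma_s$ witnessing it.
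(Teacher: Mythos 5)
Your argument is correct and is essentially the paper's own proof: the paper's ``Case 1'' is exactly the failure of your splitting lemma (refuted there by having Player I copy the forced responses $m_k$ rather than play $\tilde m(k)+1$, an immaterial difference), and its ``Case 2'' perfect-subtree construction is your Cantor scheme, with distinct branches yielding distinct members of $\scrA$ for the same reason. No gaps; nothing further to add.
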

    \begin{proof}
		Fix $\scrA \subseteq \omega^\omega$ such that Player II has a winning strategy $\tau$ for the bounding* game with respect to $\scrA$. We shall show that $\scrA$ is of size $\frakc$.
		Consider the game tree $T \subseteq \omega^{<\omega}$ that the strategy determines.
		
		First, assume the following.
		\begin{itemize}
			\item (Case 1) There is a $\sigma \in T$ such that for every odd $k \ge |\sigma|$, there is an $m_k < \omega$ such that for every $\tau \in T$ extending $\sigma$ with $|\tau| > k$, we have $\tau(k) = m_k$.
		\end{itemize}
		
		Fix the witness $\sigma, \seq{m_k : k \ge |\sigma|}$ for Case 1.
		
		
		Consider the next play.
		\begin{table}[H]
			\centering
			\begin{tabular}{l|llllllllllll}
				Player I  & $\sigma(0)$ &              & $\dots$ & $\sigma(|\sigma|-2)$ &               & $m_{|\sigma|}$ &       & $m_{|\sigma| + 2}$ &       & $\dots$ \\ \hline
				Player II &             & $\sigma(1)$ &       $\dots$      &               & $\sigma(|\sigma|-1)$ &       & $m_{|\sigma|}$ &       & $m_{|\sigma| + 2}$ &        
			\end{tabular}
		\end{table}
		
		Then the sequence defined by the play of Player II does not dominate that defined by the play of Player I. So Player II loses. This is a contradiction.
		
		So Case 1 is false.
		Thus we have
		\begin{itemize}
			\item (Case 2) For every $\sigma \in T$, there is an odd number $k \ge |\sigma|$ such that for every $m < \omega$, there is $\tau \in T$ extending $\sigma$ with $|\tau| > k$ such that $\tau(k) \neq m$.
		\end{itemize}
        Note that there are $\tau_0, \tau_1 \supseteq \sigma$ with $|\tau_0|, |\tau_1| > k$ such that $\tau_0(k) \neq \tau_1(k)$ in Case 2.
  
		Now we can construct a subtree of $T$ in the following manner.
		First we put $\sigma_\varnothing = \varnothing$.
		Suppose we have $\seq{\sigma_s : s \in 2^{\le l}}$.
		Then for each $s \in 2^l$, we can take $\sigma_{s \append 0}, \sigma_{s \append 1} \supseteq \sigma_s$ and $k_s \ge |\sigma_s|$ such that $\sigma_{s \append 0}(k_s) \neq \sigma_{s \append 1}(k_s)$.
		
		Now for each $f \in 2^\omega$, we put $\sigma_f$ by $\sigma_f = \bigcup_{n \in \omega} \sigma_{f \upharpoonright n}$.
		
		For each $f \in 2^\omega$, we have $\sigma_f \in [T]$. So Player II wins at the play $\sigma_f$.
		So by the definition of the game, we can take $x_f \in \scrA$ such that $x_f(k) = \sigma_f(2k + 1)$.
		
		We now claim that if $f$ and $g$ are distinct elements of $2^\omega$, then we have $x_f \ne x_g$.
		Let $n := \min \{ n' : f(n') \ne g(n') \}$. Put $s = f \upharpoonright n = g \upharpoonright n$.
		We may assume that $f(n) = 0$ and $g(n) = 1$.
		We have the following:

        \[ x_f\left( \dfrac{k_s -1}{2} \right) = \sigma_f(k_s) = \sigma_{s \append 0}(k_s) \neq \sigma_{s \append 1}(k_s) = \sigma_g(k_s) = x_g\left( \dfrac{k_s -1}{2} \right). \]
		So we have $x_f \ne x_g$.
		
		Therefore we have $\abs{\scrA} \ge \abs{\{x_f : f \in 2^\omega\}} = \frakc$.
	\end{proof}

	\section{Dominating games}
 
    In this section, we consider games related to  dominating families.
	
	Fix a set $\scrA \subseteq \omega^\omega$.
	We call the following game the \textit{dominating game} with respect to $\scrA$:
	
	\begin{table}[H]
		\centering
		\begin{tabular}{l|llllll}
			Player I  & $n_0$ &       & $n_1$ &       & $\dots$ &     \\  \hline
			Player II &       & $i_0$ &       & $i_1$ &         & $\dots$
		\end{tabular}
	\end{table}
	
	Here, $\seq{n_k : k  \in \omega}$ is a sequence of numbers in $\omega$ and $\seq{i_k : k  \in \omega}$ is a sequence of numbers in $2$.
	Player II wins when Player II played $1$ eventually and there is $g \in \scrA$ such that 
	\begin{align*}
		\{ k \in \omega :  i_k = 1 \} = \{ k \in \omega : n_k < g(k) \}.
	\end{align*}

	We call the following game the \textit{dominating* game} with respect to $\scrA$:
	
	\begin{table}[H]
		\centering
		\begin{tabular}{l|llllll}
			Player I  & $n_0$ &       & $n_1$ &       & $\dots$ &     \\  \hline
			Player II &       & $m_0$ &       & $m_1$ &         & $\dots$
		\end{tabular}
	\end{table}
 
	Here, $\seq{n_k : k  \in \omega}$ and $\seq{m_k : k  \in \omega}$ are sequences of numbers in $\omega$.
    Player II wins when
	\begin{align*}
        \seq{m_k : k  \in \omega} \in \mathcal{A} \text{ and } (\forall^\infty k)(n_k < m_k).
    \end{align*}
	
	We define $\dI, \dII, \dIstar$ and $\dIIstar$ by using dominating games and dominating* games in the same fashion as Definition \ref{defi:bIetc}.

    \begin{thm}
        $\dI = \dII = \dIstar = \frakd$ and $\dIIstar = \frakc$ hold.
    \end{thm}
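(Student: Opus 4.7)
I plan to treat each of the four equalities separately, adapting the bounding-case arguments of the preceding section. For the ``$\ge \frakd$'' directions in $\dI$, $\dII$, and $\dIstar$, the same template works: given $\scrA$ witnessing the invariant, test Player~I's constant-$f$ strategy (or play) for each $f \in \omega^\omega$; Player~II's winning response produces a dominator of $f$ in $\scrA$ (a $g \in \scrA$ with $\{k : f(k) < g(k)\}$ cofinite in the normal case, or an $\bar m \in \scrA$ with $(\forall^\infty k)\,f(k) < m_k$ in the star case), so $\scrA$ is a dominating family. For $\dI \le \frakd$, I adapt the proof of $\bI \le \frakb$: given dominating $\scrA$ and $\sigma\colon 2^{<\omega}\to\omega$, set $f(k) = \max\{\sigma(t) : t \in 2^k\}$, pick $g \in \scrA$ with $g(k) > f(k)$ eventually, and let $i_k = 1 \iff \sigma(\bar i \upharpoonright k) < g(k)$; since eventually $\sigma(\bar i \upharpoonright k) \le f(k) < g(k)$, this yields $\bar i \in \one$ and the required set equality holds by construction. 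For $\dII \le \frakd$, I take a pointwise-totally-dominating family of size $\frakd$ (obtained from any dominating family by closing under adding constants) and have Player~II always play $1$. For $\dIIstar = \frakc$, $\scrA = \omega^\omega$ gives the upper bound, and the lower bound follows by the same Case~1/Case~2 analysis of Player~II's strategy tree as in Theorem~\ref{thm:bIIstar}: Case~1 (eventually deterministic Player~II moves) contradicts the winning condition by having Player~I play large values past the determinism point, forcing $(\forall^\infty k)\,n_k < m_k$ to fail, while Case~2 yields a perfect subtree with $\frakc$ distinct witnesses in $\scrA$.

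The technical step is $\dIstar \le \frakd$. The plan is to fix a dominating family $\scrA$ of size $\frakd$ closed under suitable operations --- at minimum, finite modifications and shifts, both of which preserve the cardinality $\frakd$ --- and to show that for any Player~I strategy $\sigma\colon \omega^{<\omega}\to\omega$ some $\bar m \in \scrA$ satisfies $(\forall^\infty k)\,m_k > \sigma(\bar m \upharpoonright k)$. The key tool is the coordinate-stabilizing iteration $g^{(0)} = g$, $g^{(i+1)}(k) = \max(g^{(i)}(k),\,\sigma(g^{(i)} \upharpoonright k) + 1)$: by induction on $k$ each coordinate $g^{(i)}(k)$ stabilizes after finitely many iterations, so the limit $g^{(\infty)}$ is well defined, satisfies $g^{(\infty)}(k) > \sigma(g^{(\infty)} \upharpoonright k)$ for every $k$, and dominates $g$ pointwise. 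With $g \in \scrA$ chosen appropriately relative to the strategy $\sigma$, I intend to argue that the iteration only modifies $g$ on finitely many coordinates, placing $g^{(\infty)}$ in $\scrA$ via the finite-modification closure.

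The hardest point will be this last verification. In the $\bIstar \le \frakb$ proof the corresponding bound on ``losing plays'' is obtained by a single function $f$ because the $\exists^\infty$ winning condition lets one uniformly bound them; the $\forall^\infty$ condition in $\dIstar$ blocks this direct adaptation, so controlling the iteration requires a more delicate choice of $g \in \scrA$ --- perhaps guided by an auxiliary function such as the (game-dependent) minimal fixed point of the recursion $h(k) = \sigma(h \upharpoonright k) + 1$ --- together with further closure properties of $\scrA$, tailored so that the limit of the iteration stays inside $\scrA$ without inflating the cardinality beyond $\frakd$.
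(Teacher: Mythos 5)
Most of your proposal is sound: the lower bounds, $\dII \le \frakd$, your direct argument for $\dI \le \frakd$, and the treatment of $\dIIstar = \frakc$ all match the paper or are easy variants of it (the paper simply deduces $\dIIstar \ge \bIIstar = \frakc$ because winning the dominating* game is harder for Player II than winning the bounding* game). The genuine gap is exactly where you flag it, in $\dIstar \le \frakd$, and it is not a deferred routine verification --- the approach as stated does not work. Your coordinate-stabilizing iteration does converge (each coordinate stabilizes by induction on $k$), but its limit $g^{(\infty)}$ is in general an infinite modification of $g$: already $g^{(1)}$ differs from $g$ at every $k$ with $\sigma(g\upharpoonright k) \ge g(k)$, and nothing prevents this from happening infinitely often. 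Worse, $g^{(\infty)}$ depends on the strategy $\sigma$, and there are $\frakc$ many strategies; closing a family of size $\frakd$ under the maps $g \mapsto g^{(\infty)}_\sigma$ would in general produce a family of size $\frakc$, defeating the purpose. No closure under finite modifications and shifts can substitute for this.

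The paper supplies the missing idea: make the preprocessing of $g$ independent of $\sigma$. Fix a bijection $\pi \colon \omega \to \omega^{<\omega}$ and, for each $g$ in a dominating family $\mathcal{F}$ of size $\frakd$, build by recursion a single $g'$ satisfying $(g \circ \pi^{-1})(g' \upharpoonright n) < g'(n)$ for every $n$; put $\scrA = \{ g' : g \in \mathcal{F} \}$, of size at most $\frakd$. Given a strategy $\sigma$ of Player I, regard $\sigma \circ \pi$ as an element of $\omega^\omega$ and choose $g \in \mathcal{F}$ dominating it. Since the initial segments $g' \upharpoonright n$ are pairwise distinct, the values $\pi^{-1}(g' \upharpoonright n)$ escape every finite set, so for all but finitely many $n$ one gets $\sigma(g' \upharpoonright n) \le g(\pi^{-1}(g' \upharpoonright n)) < g'(n)$; that is, $g'$ defeats $\sigma$. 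The point is that a single dominating function over $\omega$ controls $\sigma$ on all of $\omega^{<\omega}$ simultaneously, which is precisely what your $\sigma$-dependent iteration cannot achieve within a family of size $\frakd$.
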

    \begin{proof}
        $\frakd \le \dI$ is easy.
        $\dII \le \frakd$ follows from the observation that for every totally dominating family $\scrA$, Player II has a winning strategy for the dominating game with respect to $\scrA$.
        So we have $\dI = \dII = \frakd$.

        $\dIIstar = \frakc$ follows from $\bIIstar = \frakc$ which was shown in Theorem \ref{thm:bIIstar}, since the dominating* game is harder for Player II than the bounding* game.

        We know $\frakd = \dI \le \dIstar$. So the remaining work is to show $\dIstar \le \frakd$.
        To show it, let $\pi \colon \omega \to \omega^{<\omega}$ be a bijection.
        Fix a dominating family $\mathcal{F} \subseteq \omega^\omega$.
        For $g \in \mathcal{F}$, we define $g' \in \omega^\omega$ so that 
        \[
        (\forall n)((g \circ \pi^{-1})(g' \upharpoonright n) < g'(n)).
        \]
        This $g'$ can be constructed by induction on $n$.
        Put
        \[
        \scrA = \{ g' : g \in \mathcal{F} \}.
        \]
        Take an arbitrary strategy $\sigma \colon \omega^{<\omega} \to \omega$ of Player I. We have to show that $\sigma$ is not a winning strategy.
        Since $\mathcal{F}$ is a dominating family, we can take $g \in \mathcal{F}$ that dominates $\sigma \circ \pi$. Then for all but finitely many $m$, we have
        \[
        \sigma(g' \upharpoonright n) = \sigma(\pi(\pi^{-1}(g' \upharpoonright n))) \le g(\pi^{-1}(g' \upharpoonright n)) \le g'(n).
        \]
        This inequality means if Player II plays $g'$, then Player II wins against Player I who obeys the strategy $\sigma$.
        So we have proved $\sigma$ is not a winning strategy.
    \end{proof}
 
	\section{Reaping games}

    In this section, we consider reaping games and reaping* games, which are related to reaping families.
    The main result of this section is that $\max \{ \frakr, \frakd \} \le \rI \le \max \{ \frakr_\sigma, \frakd \}$. Of course, if $\mathfrak{r}=\mathfrak{r}_\sigma$ it would turn out that $\max\{\frakr,\frakd\}=\rI$. It is in fact an open question if it is consistent that $\frakr\not=\frakr_\sigma$.  For more information the reader may want to look at \cite{splittingreaping}.
    
	Fix a set $\scrA \subseteq [\omega]^\omega$.
	We call the following game the \textit{reaping game} with respect to $\scrA$:
	
	\begin{table}[H]
		\centering
		\begin{tabular}{l|llllll}
			Player I  & $n_0$ &       & $n_1$ &       & $\dots$ &     \\  \hline
			Player II &       & $i_0$ &       & $i_1$ &         & $\dots$
		\end{tabular}
	\end{table}
 
 	Here, $n_0 < n_1 < n_2 < \dots < n_k < \dots$ are increasing numbers in $\omega$, $i_k$ ($k \in \omega$) are elements in $\{0,1\}$.
	Player II wins when there is $A \in \scrA$ such that 
	\begin{align*}
		\{ n_k : k \in \omega \} \cap A = \{ n_k : k \in \omega \text{ and } i_k = 1\} \text{ and } A \text{ reaps } \{ n_k : k \in \omega \}.
	\end{align*}
 
	We call the following game the \textit{reaping* game} with respect to $\scrA$:
	
	\begin{table}[h]
		\centering
		\begin{tabular}{l|llllll}
			Player I  & $i_0$ &       & $i_1$ &       & $\dots$ &     \\  \hline
			Player II &       & $j_0$ &       & $j_1$ &         & $\dots$
		\end{tabular}
	\end{table}
	
	Here, $\seq{i_k : k  \in \omega}$ and $\seq{j_k : k  \in \omega}$ are sequences of elements in $\{0,1\}$.
	Player II wins when Player II played $1$ infinitely often and 
	\begin{align*}
    \{ k \in \omega : j_k = 1 \} \in \scrA \text{ and  reaps } \{ k \in \omega : i_k = 1\}. 
	\end{align*}
	
	We define $\rI, \rII, \rIstar$ and $\rIIstar$ using reaping games and reaping* games in the same fashion as Definition \ref{defi:bIetc}.

    Note that, when Player II wins in the reaping game, then the set $A \in \scrA$ that witnesses Player II wins satisfies the following condition.
    \begin{enumerate}
        \item if $\seq{i_k : k \in \omega}$ is eventually zero, $A$ is almost disjoint from $\seq{n_k : k \in \omega}$.
        \item if the digit 1 appears infinitely often in $\seq{i_k : k \in \omega}$, $A$ is almost contained in $\seq{n_k : k \in \omega}$ and $A =^* \{ n_k : i_k = 1 \}$.
    \end{enumerate}
 
	\begin{thm}
		$\rII = \frakc$ holds.
	\end{thm}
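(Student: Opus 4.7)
The plan is to follow the strategy of Theorem~\ref{thm:bII}: fix a winning strategy $\tau$ of Player~II for the reaping game with respect to $\scrA$ and consider the induced game tree $T\subseteq\omega^{<\omega}$. A key preliminary is the observation preceding the theorem: any witness $A$ for a play $\langle n_k\rangle$ is either of \emph{type~(a)}, with $A=^*\{n_k:i_k=1\}$ when $1$ appears infinitely often in the response, or of \emph{type~(b)}, with $A$ almost disjoint from $\{n_k\}$ when the response is eventually~$0$. I split into two cases parallel to the proof of Theorem~\ref{thm:bII}.

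In Case~1, some $\sigma\in T$ of even length has Player~II's response at each subsequent even position $r\ge|\sigma|$ eventually fixed to some $i_r\in\{0,1\}$ uniformly in large $\pi(r)$ over extensions $\pi\in T$ of $\sigma$. Player~I then forces the response to be $\langle i_r\rangle$ by playing all its moves large enough. If $1$ appears infinitely often in $\langle i_r\rangle$, the witnesses are $=^*\{n_r:i_r=1\}$; varying Player~I's moves at the $1$-positions through an almost disjoint family of size~$\frakc$ produces $\frakc$ distinct witnesses. If $\langle i_r\rangle$ is eventually~$0$, every witness is almost disjoint from $\{n_r\}$; this sub-case is delicate and will require a separate combinatorial argument (e.g., diagonalizing Player~I's plays against $\scrA$) to show it either forces $|\scrA|\ge\frakc$ directly or contradicts $\tau$ being winning.

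In Case~2, I build a perfect subtree $\langle\sigma_s:s\in 2^{<\omega}\rangle$ of $T$ as in Theorem~\ref{thm:bII}, with splitting positions $r_s$ and moves $m_0^s\ne m_1^s$ chosen large and pairwise distinct across all pairs $(i,s)\in 2\times 2^{<\omega}$, arranging further that Player~I's moves in $\sigma_{s\append i}$ added after $r_s$ exceed every previously used $m$-value. This guarantees that $m_1^{s'}\in X_f$ iff $s'\append 1\prec f$, where $X_f$ is the set of Player~I moves along $\sigma_f=\bigcup_n\sigma_{f\upharpoonright n}$. I then restrict attention to $f\in 2^\omega$ with infinitely many $1$s (a set of size~$\frakc$): the splitting-position bits force $\langle i_{f,k}\rangle$ to have $1$ infinitely often, so every witness $A_f\in\scrA$ is of type~(a), giving $A_f\subseteq^* X_f$ and $m_1^{s''}\in A_f$ mod finite whenever $s''\append 1\prec f$. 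For $f\ne g$ first differing at~$\Delta$ with (WLOG) $f(\Delta)=0$, picking any $s''\prec f$ with $|s''|>\Delta$ and $f(|s''|)=1$ (infinitely many exist by the restriction) gives $m_1^{s''}\in A_f$ mod finite while $m_1^{s''}\notin X_g\supseteq^* A_g$, so $A_f\ne A_g$; this produces $\frakc$ distinct witnesses in $\scrA$. The main obstacle will be the Case~1 sub-case where the forced bit sequence is eventually~$0$, for which I expect a separate argument, possibly reducing to a Case~2-style perfect-tree construction at another node or combinatorially diagonalizing Player~I's free moves against $\scrA$.
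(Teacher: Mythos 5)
Your proposal has a genuine gap, and you have correctly located it yourself: the sub-case of your Case~1 in which the forced bit sequence $\langle i_r\rangle$ is eventually $0$ is left unresolved, and it does not resolve itself. The root of the problem is your choice of case decomposition. You index the forced bit by the \emph{position} $r$, uniformly over all sufficiently large values played there, so the thresholds ``for all but finitely many $m$'' grow with $r$; to force the response sequence $\langle i_r\rangle$ Player~I must play above these growing thresholds and therefore cannot enumerate a cofinite set. If instead Player~I plays consecutive integers, the moves may land in the exceptional finite sets $E_r$ infinitely often, the responses need not be eventually $0$, and no contradiction (nor a family of $\frakc$ witnesses) falls out. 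The paper avoids this entirely by indexing the forced bit by the \emph{value} played: its Case~1 asserts the existence of $\sigma\in T$ such that for every $m>\sigma(\abs{\sigma}-2)$ there is a single bit $i_m$ with $\tau(r)=m\implies\tau(r+1)=i_m$ for \emph{every} extension $\tau$ and \emph{every} position $r$. With that formulation the eventually-zero sub-case is immediately contradictory: Player~I plays all consecutive integers above the point where $i_m=0$, the play $\{n_k\}$ is cofinite, all responses are $0$, and no infinite $A$ can have finite intersection with a cofinite set. The non-eventually-zero sub-case then yields $\frakc$ witnesses exactly as in your type-(a) argument, by letting Player~I range over $[X]^\omega/\mathrm{fin}$ for an infinite set $X$ of values with $i_m=1$.

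The remainder of your argument is essentially sound and close in spirit to the paper's (which only sketches its Case~2). Your type-(a) analysis in Case~1 and your perfect-tree construction in Case~2 both work, though in Case~2 you should be careful about the bookkeeping behind the claim ``$m_1^{s'}\in X_f$ iff $s'\append\seq{1}\prec f$'': the intermediate moves of the extensions $\sigma_{s\append i}$ realizing the desired responses are not freely chosen by you, so you must reserve the values $m^s_i$ to avoid all moves already appearing in the finitely many nodes constructed so far, and note that $A_g\subseteq^* X_g$ only gives $m_1^{s''}\notin A_g$ for all but finitely many of the infinitely many relevant $s''$ (which still suffices). Also note that under exact equality $A\cap\{n_k\}=\{n_k: i_k=1\}$ and strictly increasing moves, membership of a reserved value in the witness is decided exactly, not just mod finite, which simplifies your distinctness argument. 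But none of this repairs Case~1: either adopt the paper's value-indexed decomposition, or supply the missing argument you promise, which I do not see how to complete in your position-indexed setting.
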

    \begin{proof}
		Fix $\scrA \subseteq [\omega]^\omega$ such that Player II has a winning strategy for the reaping game with respect to $\scrA$. Fix such a strategy. We shall show that $\scrA$ is of size $\frakc$.
		Consider the game tree $T \subseteq \omega^{<\omega}$ that the strategy determines.
  
		First, assume the following.
		\begin{itemize}
			\item (Case 1) There is a  $\sigma \in T$ of even length such that for every $m > \sigma(\abs{\sigma}-2)$ there is $i_m < 2$ such that for every $\tau \in T$ extending $\sigma$ and every $r \in [\abs{\sigma}, \abs{\tau})$, $\tau(r) = m$ implies $\tau(r+1) = i_m$.
		\end{itemize}

        Fix a witness $\sigma$ and $\seq{i_m : m \ge \sigma(\abs{\sigma}-2)}$ of Case 1.
        If $i_m$'s are eventually zero, clearly there is a play that Player II loses along the strategy, which is a contradiction.
        
        So $i_m$'s are not eventually zero. Take an infinite set $X \subseteq [\sigma(\abs{\sigma}-2), \omega)$ such that $i_m = 1$ for every $m \in X$.
        Considering Player I plays an arbitrary subset of $X$, Player II must accordingly produce an $A \in \mathcal{A}$ that is almost equal to this set.
        But the cardinality of $[X]^{\omega} / \mathsf{fin}$ is $\frakc$. So we have shown $\abs{\scrA} = \frakc$.

        Next, we assume the negation of Case 1.
        In this case we can construct a perfect subtree of $T$ whose different paths yield different members of $\scrA$.
    \end{proof}
	
	\begin{thm}
		$\rI \ge \frakr, \frakd$ holds.
	\end{thm}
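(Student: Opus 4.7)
The plan is to prove $\rI \ge \frakr$ and $\rI \ge \frakd$ separately, each by exhibiting an explicit winning strategy for Player I under the appropriate smallness hypothesis on $\scrA$. Throughout, $N := \{n_k : k \in \omega\}$ denotes the set played by Player I.

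For $\rI \ge \frakr$, I will use a trivial constant strategy. Assuming $\abs{\scrA} < \frakr$, fix $x \subseteq \omega$ that is not reaped by any $A \in \scrA$, so both $A \cap x$ and $A \setminus x$ are infinite for every $A \in \scrA$. Let Player I enumerate $x$ in increasing order, ignoring Player II's responses; then $N = x$, and by the choice of $x$ no $A \in \scrA$ reaps $N$, so the winning condition fails against every play of Player II.

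For $\rI \ge \frakd$, I will use an adaptive strategy. Let $e_A$ denote the increasing enumeration of $A$. Since $\abs{\scrA} \cdot \aleph_0 < \frakd$, the family $\{n \mapsto e_A(n+c) : A \in \scrA, c \in \omega\}$ is not dominating, so I can pick an increasing $g \in \omega^\omega$ with $g(n) > e_A(n+c)$ for infinitely many $n$, for every $A \in \scrA$ and every $c \in \omega$. Player I's strategy will be $n_0 = 0$ and, for $k \ge 1$, $n_k = n_{k-1}+1$ when $i_{k-1} = 0$ and $n_k = \max(n_{k-1}+1,\, g(k))$ when $i_{k-1} = 1$. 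The verification splits on Player II's pattern. If Player II plays $1$ only finitely often, then eventually $n_k = n_{k-1}+1$, so $N$ is cofinite while $\{n_k : i_k = 1\}$ is finite; neither $A \subseteq^* N$ (which would force $A =^* \{n_k : i_k = 1\}$ finite) nor $A \cap N =^* \emptyset$ (which would force $A$ almost contained in the finite set $\omega \setminus N$) is compatible with $A \in [\omega]^\omega$. If Player II plays $1$ infinitely often at positions $k_0 < k_1 < \cdots$, the almost-disjoint case is immediately excluded, so the only remaining possibility is $A = \{n_{k_m} : m \in \omega\} \cup F$ for some finite $F$ disjoint from $\{n_{k_m} : m \in \omega\}$.

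The main obstacle is the enumeration estimate in this last case. Because $i_{k_{m-1}} = 1$, the strategy forces $n_{k_{m-1}+1} \ge g(k_{m-1}+1)$, and since the sequence $(n_k)$ is increasing and $k_m \ge k_{m-1}+1 \ge m$ with $g$ increasing, we obtain $n_{k_m} \ge g(m)$ for every $m \ge 1$. Setting $c = \abs{F}$, for all sufficiently large $m$ the $F$-elements lie below $n_{k_{m-c}}$, so $e_A(m) = n_{k_{m-c}} \ge g(m-c)$; this says $g \le^* n \mapsto e_A(n+c)$, contradicting the choice of $g$. Allowing arbitrary shifts $c$ when selecting $g$ is precisely what absorbs the finite discrepancy $F = A \setminus \{n_{k_m} : m \in \omega\}$.
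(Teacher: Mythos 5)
Your proposal is correct and uses essentially the same ideas as the paper: the $\frakr$ bound via a non-reaped set played constantly, and for $\frakd$ the same adaptive strategy for Player I (increment by one on a $0$, jump ahead along a fast function on a $1$) with the same case split on whether Player II plays $1$ infinitely often. The only cosmetic differences are that you argue contrapositively and absorb the finite discrepancy $A \setminus N$ with shifted enumerations $n \mapsto e_A(n+c)$, where the paper instead closes $\scrA$ under almost-equality and shows the resulting family of enumerations is dominating.
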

    \begin{proof}
        That $\rI \ge \frakr$ is easy.
        We show $\rI \ge \frakd$.
        Fix a family $\mathcal{A}$ such that Player I has no winning strategy for the reaping game with respect to $\scrA$.
        For $A \in [\omega]^\omega$, let $e_A$ be the increasing enumeration of $A$.
        Put
        \[
        \mathcal{F} = \{ e_B : B \text{ is almost equal to some } A \in \scrA \}.
        \]
        Then we have $\abs{\mathcal{F}} = \abs{\scrA}$.
        
        We shall show that $\mathcal{F}$ is a dominating family.
        So we fix an arbitrary increasing function $g \in \omega^\omega$.
        Let us consider the following strategy of Player I:
        First play $f(0)$.
        If Player II responds $0$ then play $f(0)+1$, otherwise play $f(1)$.
        In general, if in the last time Player I played $f(l)+m$ and Player II responded $0$, then play $f(l)+m+1$, otherwise, play $f(l+1)+m$.

        By the assumption, this strategy is not a winning strategy, so there is a play of Player II $\bar{i} \in 2^\omega$ and $A \in \scrA$ such that $A$ witnesses Player II wins with $\bar{i}$ against the strategy.

        Let $\seq{n_k : k \in \omega}$ be the corresponding play of Player I.
        If $\bar{i}$ is eventually zero, then $\seq{n_k : k \in \omega}$ contains almost all integers in $\omega$.
        Moreover, by the rule of the game, $A$ is almost disjoint from this set. This cannot happen.

        So the digit $1$ appears infinitely often in $\bar{i}$.
        Then $A =^* \{ n_k : i_k = 1 \}$.
        Call the last set $B$.
        Then $e_B \in \mathcal{F}$ and $e_B$ dominates $f$ by the choice of the strategy.

        Therefore, $\mathcal{F}$ is a dominating family.
    \end{proof}

    Define a cardinal invariant $\frakr_\mathrm{simult}$ as follows:
    \begin{align*}
    \frakr_\mathrm{simult} &= \min \{ \mathcal{F} \subseteq ([\omega]^\omega)^\omega : (\forall \seq{A_n \in [\omega]^\omega : n \in \omega})(\exists \seq{B_n : n \in \omega} \in \mathcal{F}) \\
    &\hspace{5cm} [(\exists n)(B_0 \subseteq \omega \setminus A_n) \text{ or } (\forall n)(B_n \subseteq A_n)] \} 
    \end{align*}

    It can be easily seen that $\frakr, \frakd \le \frakr_\mathrm{simult}$.

    \begin{prop}
        $\frakr_\mathrm{simult} \le \max \{ \frakr_\sigma, \frakd \}$.
    \end{prop}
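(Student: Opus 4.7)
The plan is to build $\mathcal F$ out of a $\sigma$-reaping family $\mathcal R$ of size $\frakr_\sigma$ and a dominating family $\mathcal D$ of size $\frakd$, so that $|\mathcal F| \le \frakr_\sigma \cdot \frakd = \max\{\frakr_\sigma, \frakd\}$. Given any sequence $\seq{A_n : n \in \omega}$ of infinite subsets of $\omega$, $\sigma$-reaping will produce a single $R \in \mathcal R$ reaping every $A_n$; this gives almost-containment information, but the definition of $\frakr_\mathrm{simult}$ demands exact containment. The role of $\mathcal D$ is to absorb the finite noise.

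Concretely, I would first replace $\mathcal D$ by the closure under finite modifications, which keeps the cardinality equal to $\frakd$ and lets us assume that for every $g \in \omega^\omega$ there is an $f \in \mathcal D$ with $g(n) \le f(n)$ for \emph{all} $n$. Then, for each $R \in \mathcal R$ and $f \in \mathcal D$, define the sequence
\[
\seq{B_n^{R,f} : n \in \omega} \quad \text{by} \quad B_n^{R,f} = R \setminus [0, f(n)],
\]
which lies in $([\omega]^\omega)^\omega$ since $R$ is infinite. Let $\mathcal F$ be the collection of all such sequences.

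To verify $\mathcal F$ works, fix $\seq{A_n : n \in \omega}$ and pick $R \in \mathcal R$ reaping every $A_n$. Split into two cases according to whether $R \subseteq^* A_n$ for all $n$ or not. In the first case, set $g(n) = \max((R \setminus A_n) \cup \{0\}) + 1$, choose $f \in \mathcal D$ with $g \le f$ pointwise, and observe that any $r \in B_n^{R,f}$ lies in $R$ and exceeds $\max(R \setminus A_n)$, hence $r \in A_n$; so $B_n^{R,f} \subseteq A_n$ for every $n$. In the second case, pick $n_0$ with $R \cap A_{n_0}$ finite, choose $f \in \mathcal D$ with $f(0) \ge \max((R \cap A_{n_0}) \cup \{0\})$, and note that $B_0^{R,f}$ is then disjoint from $A_{n_0}$, giving $B_0^{R,f} \subseteq \omega \setminus A_{n_0}$.

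The main obstacle is the mismatch between $\subseteq^*$ (what $\sigma$-reaping delivers) and $\subseteq$ (what the definition of $\frakr_\mathrm{simult}$ demands); this is exactly what forces the use of a dominating family to control, uniformly in $n$, the point beyond which the finite exceptional set of the reaping relation has been truncated. Once the closure of $\mathcal D$ under finite modifications is in place, the two cases above go through directly and the bound $\frakr_\mathrm{simult} \le \max\{\frakr_\sigma, \frakd\}$ follows.
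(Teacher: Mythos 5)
Your proof is correct and is essentially the paper's own argument: the same family $B^{R,f}_n = R \setminus f(n)$ indexed by a $\sigma$-reaping family and a totally dominating family, with the same two-case verification (almost-containment everywhere upgraded to containment via total domination, versus a single almost-disjoint index handled through the $n=0$ coordinate). The only cosmetic difference is that you spell out how to obtain the totally dominating family by closing $\mathcal{D}$ under finite modifications, which the paper simply assumes.
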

    \begin{proof}
        Let $\mathcal{R}$ be a $\sigma$-reaping family of size $\frakr_\sigma$ and $\mathcal{D}$ be a totally dominating family of $\omega^\omega$ of size $\frakd$.
        For $(C, h) \in \mathcal{R} \times \mathcal{D}$, we let
        \[
        B^{C,h}_n = C \setminus h(n).
        \]

        We now show $\{ \seq{B^{C,h}_n : n \in \omega} : (C, h) \in \mathcal{R} \times \mathcal{D} \}$ is a witness of $\frakr_\mathrm{simult}$.
        Fix a sequence $\seq{A_n \in [\omega]^\omega : n \in \omega}$.
        Since $\mathcal{R}$ is a $\sigma$-reaping family, we can take $C \in \mathcal{R}$ such that
        \[
        (\forall n)(C \subseteq^* A_n \text{ or } C \subseteq^* \omega \setminus A_n).
        \]
        We first consider the case $C \subseteq^* \omega \setminus A_n$ for some $n$.
        Take $m$ such that $C \setminus m \subseteq \omega \setminus A_n$.
        Take $h \in \mathcal{D}$ such that $h(0) \ge m$.
        Then clearly, $\seq{B^{C,h}_n : n \in \omega}$ satisfies the condition of $\frakr_\mathrm{simult}$.

        We next consider the case $C \not \subseteq^* \omega \setminus A_n$ for every $n$. Then for every $n$, we have $C \subseteq^* A_n$.
        Let $f \in \omega^\omega$ be such that $C \setminus f(n) \subseteq A_n$.
        Take $h \in \mathcal{D}$ that totally dominates $f$.
        Then we also have $C \setminus h(n) \subseteq A_n$.
        Then $\seq{B^{C,h}_n : n \in \omega}$ satisfies the condition of $\frakr_\mathrm{simult}$.
    \end{proof}

    \begin{thm}
        $\rI \le \frakr_\mathrm{simult}$ holds.
    \end{thm}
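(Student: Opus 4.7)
The plan is to start from a witness $\mathcal{F} \subseteq ([\omega]^\omega)^\omega$ of $\frakr_\mathrm{simult}$ and set
\[
\scrA := \{B_n \triangle F : \vec{B} = \langle B_k : k \in \omega \rangle \in \mathcal{F},\ n \in \omega,\ F \in [\omega]^{<\omega}\},
\]
which has size at most $\frakr_\mathrm{simult} \cdot \omega \cdot \omega = \frakr_\mathrm{simult}$. For any infinite $A \subseteq \omega$, let $\bar{i}_A \in 2^\omega$ be the Player II play defined recursively by $\bar{i}_A(k) = 1 \iff \sigma_\mathrm{I}(\bar{i}_A \upharpoonright k) \in A$, and set $N^A := \{\sigma_\mathrm{I}(\bar{i}_A \upharpoonright k) : k \in \omega\}$; then Player II wins the reaping game by playing $A$ precisely when $A$ reaps $N^A$.

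Fix a Player I strategy $\sigma_\mathrm{I} \colon 2^{<\omega} \to \omega$, enumerate $2^{<\omega} = \{s_n : n \in \omega\}$ with $s_0 = \varnothing$, and set
\[
A_n := \{\sigma_\mathrm{I}(t) : t \in 2^{<\omega},\ s_n \subseteq t\},
\]
the infinite set of Player I moves occurring in the cone below $s_n$ in the strategy tree. Applying the $\frakr_\mathrm{simult}$ property of $\mathcal{F}$ to $\langle A_n \rangle$ yields some $\vec{B} \in \mathcal{F}$ satisfying either Case 1, i.e.\ $B_0 \subseteq \omega \setminus A_n$ for some $n$, or Case 2, i.e.\ $B_n \subseteq A_n$ for every $n$.

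In Case 1, I pick the witness $n$ and let $F := \{\sigma_\mathrm{I}(s_n \upharpoonright k) : k < |s_n|,\ s_n(k) = 1\}$. Setting $A := B_0 \triangle F \in \scrA$, since $B_0 \cap A_n = \varnothing$, I show by induction on $k < |s_n|$ that $\sigma_\mathrm{I}(s_n \upharpoonright k) \in A$ iff $s_n(k) = 1$, so $\bar{i}_A$ begins with $s_n$. Past that, every further play $\sigma_\mathrm{I}(s_n \append 0^{\ell})$ lies in $A_n$ but not in $B_0 \cup F$, hence not in $A$, forcing $\bar{i}_A = s_n \append \bar{0}$. Consequently $A \cap N^A = F$ is finite, so $A \subseteq^* \omega \setminus N^A$, $A$ reaps $N^A$, and Player II wins.

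The main obstacle will be Case 2, where each $B_n$ is forced into the cone $A_n$ but the various $B_n$'s need not interact in any prescribed way. I plan a K\"onig-style descent through the tree of $s_n$'s to select a branch $\bar{i}^* \in 2^\omega$ and an index $n^*$ with $s_{n^*} \subseteq \bar{i}^*$ such that a suitable finite modification $A := B_{n^*} \triangle F \in \scrA$ is almost contained in the plays along $\bar{i}^*$, yielding $A \subseteq^* N^A$ and Player II's win by almost containment. The delicate point is showing that such a branch and index always exist, particularly when $\sigma_\mathrm{I}$ is not injective: plays on distinct histories may then coincide as integers and create spurious elements in either $A \cap N^A$ or $A \setminus N^A$, so verifying this is where the full strength of $\frakr_\mathrm{simult}$---using the compatibility $B_n \subseteq A_n$ simultaneously for every $n$ rather than just $B_0 \subseteq A_0$---becomes essential.
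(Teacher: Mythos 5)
Your Case 2 is a genuine gap, and it is not a gap that your setup can close. You take $A_n$ to be the set of \emph{all} Player I moves in the entire cone of histories extending $s_n$, so the conclusion $B_n \subseteq A_n$ for every $n$ only tells you that each $B_n$ is scattered somewhere among the moves below $s_n$; it gives no control tying any single $B_{n^*}$ to a single branch. A K\"onig-style descent has nothing to descend on: the hypothesis $B_{n\append 0}\subseteq A_{n\append 0}$, $B_{n\append 1}\subseteq A_{n\append 1}$ does not relate $B_{n\append i}$ to $B_n$, so choosing a child at each level does not accumulate any information about a fixed member of your family $\scrA$. Worse, for Player II to win by containment the witness must equal $\{n_k : i_k=1\}$ exactly (up to the allowed finite modification), i.e.\ it must be the set of \emph{accepted} moves along one play; your $\scrA$ consists only of finite modifications of the $B_n$'s themselves, and no $B_n$ need be of that form. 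The paper's proof avoids exactly this: it indexes the simultaneous reaping family by $\omega^{<\omega}$, builds a tree $T$ whose node $t$ has successor set $A_t=\{\sigma(s\append\seq{1}\append\seq{0}^m):m\in\omega\}$ (the candidate ``next accepted'' values after the acceptance history coded by $t$), and then, from $\seq{B_t:t\in\omega^{<\omega}}$ with $B_t\subseteq A_t$ for all $t$, recursively forms $b_{n+1}=b_n\append\seq{\min B_{b_n}}$; crucially it also puts the derived sets $\varphi(\bar B)=\{\min B_{b_n}:n\}$ into $\scrA$, and these are by construction exactly the accepted sets of a legal play. Both the choice of the $A$'s (successor sets of the acceptance tree, not cones of the full strategy tree) and the enlargement of $\scrA$ by the $\varphi(\bar B)$'s are missing from your proposal, and without them Case 2 fails.

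Your Case 1 is essentially right in spirit but the specific correcting set $F=\{\sigma_\mathrm{I}(s_n\upharpoonright k): s_n(k)=1\}$ does not work as written: $B_0$ is only known to avoid the cone $A_n$ below $s_n$, so it may contain moves $\sigma_\mathrm{I}(s_n\upharpoonright k)$ made at proper initial segments, and then $B_0\triangle F$ has the wrong membership at those points (a point in both $B_0$ and $F$ is excluded from $A$ although $s_n(k)=1$ demands inclusion). Since your $\scrA$ allows arbitrary finite $F$, this is repairable by choosing $F$ to be the exact finite symmetric difference needed on the initial segment; it is a local fix, unlike the Case 2 problem.
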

    \begin{proof}
        Fix a witness $\mathcal{F}$ of $\frakr_\mathrm{simult}$.

        Using a bijection between $\omega$ and $\omega^{<\omega}$, we think $\mathcal{F}$ is a subset of $([\omega]^\omega)^{(\omega^{<\omega})}$. That is, $\mathcal{F}$ satisfies the following condition:
        \begin{align*}
        &(\forall \seq{A_t \in [\omega]^\omega : t \in \omega^{<\omega}})(\exists \seq{B_t : t \in \omega^{<\omega}} \in \mathcal{F}) \\
        &\hspace{3cm} [(\exists t)(B_\varnothing \subseteq \omega \setminus A_t) \text{ or } (\forall t)(B_t \subseteq A_t)].  \label{eq:simult} \tag{$*$}
        \end{align*}

        Fix $\bar{B} = \seq{B_t : t \in \omega^{<\omega}} \in \mathcal{F}$.
        We define $\seq{b^{\bar{B}}_n : n \in \omega}$ by
        \begin{align*}
            b^{\bar{B}}_0 &= \varnothing, \\
            b^{\bar{B}}_{n+1} &= b^{\bar{B}}_n \append \seq{\min B_{b^{\bar{B}}_n}}.
        \end{align*}
        Put $\varphi(\bar{B}) = \range \bigcup_n b^{\bar{B}}_n$.

        Define $\mathcal{A}$ by
        \[
        \mathcal{A} = \{ \varphi(\bar{B}) : \bar{B} \in \mathcal{F} \} \cup \{ X : \bar{B} \in \mathcal{F}, \text{$X$ and $B_0$ are almost equal} \}.
        \]
        Note that $\abs{\mathcal{A}} \le \abs{\mathcal{F}}$.

        We show that Player I has no winning strategy for the reaping game with respect to $\mathcal{A}$.

        Let $\sigma \colon 2^{<\omega} \to \omega$ be an arbitrary strategy of Player I.
        Consider the tree $T \subseteq \omega^\omega$ defined as follows.
        $T \cap \omega^1 = \{ \seq{\sigma(\varnothing)}, \seq{\sigma(0)}, , \seq{\sigma(00)}, \dots \}$.
        In general, the node whose label is $\sigma(s)$ has children whose labels are $\sigma(s \append \seq{1} \append \seq{0}^m)$ for $m \in \omega$.

        We now put for each $t \in \omega^{<\omega}$
        \[
        A_t = \begin{cases} \suc_{T_\sigma}(t) & \text{(if $t \in T$)} \\ \omega & \text{(otherwise)}. \end{cases}
        \]
        Then applying (\ref{eq:simult}), we can take $\seq{B_t : t \in \omega^{<\omega}} \in \mathcal{F}$ such that
        \[
        (\exists t)(B_\varnothing \subseteq \omega \setminus A_t) \text{ or } (\forall t)(B_t \subseteq A_t).
        \]
        
        Consider the former case: $(\exists t)(B_\varnothing \subseteq \omega \setminus A_t)$.
        Fix such a $t$. Then $t$ must be in $T$.
        If $t = \varnothing$, consider the following play:

    	\begin{table}[H]
    		\centering
    		\begin{tabular}{l|llllllll}
    			Player I & $\cdot$ & & $\cdot$ & & $\cdot$ & & $\dots$ & \\  \hline
    			Player II & & $0$ & & $0$ & & $0$ & & $\dots$
    		\end{tabular}
    	\end{table}
        The middle dots ($\cdot$) in the first row mean the play along $\sigma$.
        Then $B_\varnothing$, which is in $\mathcal{A}$ is a witness that Player II wins. Indeed, if $\seq{n_k : k \in \omega}$ is the play of Player I, then $B_\varnothing \subseteq \omega \setminus \{ n_k : k \in \omega \}$.

        If $t \ne \varnothing$, take $s \in 2^{<\omega}$ such that the label of $t$ is $\varphi(s)$.
        Consider the following play:

    	\begin{table}[H]
    		\centering
    		\begin{tabular}{l|lllllll|lllllll}
    			Player I & $\cdot$ & & $\cdot$ & & $\dots$ & $\cdot$ & & $\cdot$ & & $\cdot$ & & $\cdot$ & & $\dots$ \\  \hline
    			Player II & & $s(0)$ & & $s(1)$ & $\dots$ & & $s(\abs{s}-1)$ & & $1$ & & $0$ & & $0$ & $\dots$
    		\end{tabular}
    	\end{table}
     
        Then a real almost equal to $B_\varnothing$, which is in $\mathcal{A}$ is a witness that Player II wins.
        
        Consider the latter case $(\forall t)(B_t \subseteq A_t)$.
        Let $A = \varphi(\seq{B_t : t \in \omega^{<\omega}})$.
        Enumerate $A$ by $A = \{a_n : n \in \omega\}$ in ascending order.
        Take the unique $m_0$ such that $a_0 = \sigma(\seq{0}^{m_0})$ and put $s_0 = \seq{0}^{m_0} \append \seq{1}$.
        By induction on $k$, take the unique $m_k$ such that $a_k = \sigma(s_k \append \seq{0}^{m_k})$ and put $s_{k+1} = s_k \append \seq{0}^{m_k} \append \seq{1}$.
        Put $\bar{i} = \bigcup_k s_k$, which is a play of Player II.
        Let $\seq{n_k : k \in \omega}$ be the corresponding play of Player I. That is $n_k = \sigma(\bar{i} \upharpoonright k)$.
        Then we have $A \in \mathcal{A}$ and $A = \{ n_k : k \in \omega, i_k = 1 \}$. So Player II wins.

        Therefore, in either case, Player II wins.
        So $\sigma$ is not a winning strategy.
    \end{proof}

    \begin{question}
        Does $\ZFC$ prove that $\rI$ is equal to $\max \{ \frakr_\sigma, \frakd \}$?
    \end{question}

    Because of the following theorem, the cardinal invariants regarding reaping* games are not worth considering.

    \begin{thm}
        For every $\scrA \subseteq [\omega]^\omega$, Player I has a winning strategy for the reaping* game with respect to $\scrA$.
    \end{thm}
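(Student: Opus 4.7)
The plan is to exhibit an explicit winning strategy for Player I that works uniformly in $\scrA$. Since Player II is required to play $1$ infinitely often, the only real task is to arrange that, on the resulting play, the set $B=\{k:j_k=1\}$ fails to reap $A=\{k:i_k=1\}$: that is, Player I needs to split $B$ with $A$.

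The strategy I propose is the parity strategy: at round $k$, Player I plays $i_k=1$ if and only if the number of $1$s among $j_0,\dots,j_{k-1}$ is even. This is a legitimate strategy because the rules let Player I see all of Player II's prior moves before committing to $i_k$ (in particular $i_0=1$).

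To verify this is winning, I would argue as follows. If Player II plays $1$ only finitely often, then by definition Player II loses, so we may assume Player II plays $1$ infinitely often, and enumerate those rounds as $t_0<t_1<t_2<\cdots$. At the moment Player I commits to $i_{t_n}$, exactly $n$ of the values $j_0,\dots,j_{t_n-1}$ are $1$, so the strategy gives $i_{t_n}=1$ iff $n$ is even. Therefore $A\cap B=\{t_{2m}:m\in\omega\}$ and $B\setminus A=\{t_{2m+1}:m\in\omega\}$, and both are infinite; hence $A$ splits $B$, and the reaping condition fails regardless of whether $B\in\scrA$. So Player II loses on every play consistent with the strategy.

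There is essentially no obstacle beyond spotting the strategy; the main content of the theorem is the observation that Player I's right to react to the parity of Player II's $1$s before each move is already enough to split any infinite set Player II can produce. This is what makes the starred cardinal invariants of the reaping game collapse to $\infty$, as recorded in Figure~\ref{table:ourresults}, and it is the reason the authors dismiss them as uninteresting.
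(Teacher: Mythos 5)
Your proposal is correct and is essentially the paper's own argument: the paper's strategy (play $0$ first, then flip Player I's previous move exactly when Player II's previous move was $1$) is precisely the parity strategy $i_k=\bigoplus_{l<k} j_l$, differing from yours only in the initial parity, which is immaterial. Your verification that the rounds where Player II plays $1$ alternate between $A$ and $\omega\setminus A$ is exactly the content the paper leaves as ``easily seen.''
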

    \begin{proof}
        Consider the following strategy of Player I:
        \begin{itemize}
            \item Play $0$ first.
            \item If the previous play of Player II is $1$, change the move from the previous play of Player I.
            \item Otherwise, play the same move as the previous play of Player I.
        \end{itemize}
        It can be easily seen that this is a winning strategy.
    \end{proof}
 
	\section{Anti-localizing games}

    In this section, we consider games related to the cardinal invariant $\add(\nul)$.

    Let $\mathcal{C} = \{ \varphi : \varphi \text{ is a function with domain } \omega \text{ that satisfies } \varphi(n) \in [\omega]^{n+1} \text{ for every } n \in \omega \} $.
    We call elements in $\mathcal{C}$ slaloms.
	
	Fix a set $\scrA \subseteq \omega^\omega$.
	We call the following game the \textit{anti-localizing game} with respect to $\scrA$:
	
	\begin{table}[H]
		\centering
		\begin{tabular}{l|llllll}
			Player I  & $a_0$ &       & $a_1$ &       & $\dots$ &     \\  \hline
			Player II &       & $i_0$ &       & $i_1$ &         & $\dots$
		\end{tabular}
	\end{table}
	
	Here, $\seq{a_k : k  \in \omega}$ is a sequence with $a_k \in [\omega]^{k+1}$ for every $k$ and $\seq{i_k : k  \in \omega}$ is a sequence of numbers in $2$.
	Player II wins when Player II played $1$ infinitely often and there is $x \in \scrA$ such that 
	\begin{align*}
		\{ k \in \omega :  i_k = 1 \} = \{ k \in \omega : x(k) \not \in a_k \}.
	\end{align*}

	We call the following game the \textit{anti-localizing* game} with respect to $\scrA$:
	
	\begin{table}[H]
		\centering
		\begin{tabular}{l|llllll}
			Player I  & $a_0$ &       & $a_1$ &       & $\dots$ &     \\  \hline
			Player II &       & $n_0$ &       & $n_1$ &         & $\dots$
		\end{tabular}
	\end{table}
 
	Here, $\seq{a_k : k  \in \omega} \in \mathcal{C}$ and $\seq{n_k : k  \in \omega}$ is a sequence of numbers in $\omega$.
    Player II wins when
	\begin{align*}
        \seq{n_k : k  \in \omega} \in \mathcal{A} \text{ and } (\exists^\infty k)(n_k \notin a_k).
    \end{align*}
	
	We define $\aNI, \aNII, \aNIstar$ and $\aNIIstar$ using anti-localizing games and anti-localizing*-games in the same fashion as Definition \ref{defi:bIetc}.
	
	\begin{thm}\label{thm:ani}
		$\aNI = \add(\nul)$ holds.
	\end{thm}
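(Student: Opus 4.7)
The plan is to invoke Bartoszyński's characterization of $\add(\nul)$ via slaloms (see \cite{blass2010combinatorial}): $\add(\nul)$ equals the least cardinality of a family $\scrA \subseteq \omega^\omega$ that is not localized by any slalom, and this value is the same whether we require the $k$-th cell of the slalom to have size $\leq k+1$ or $\leq h(k)$ for any fixed $h$ with $\lim h(k) = \infty$. The proof then splits into the two inequalities, each of which converts a slalom into a Player I object or vice versa.

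For $\aNI \geq \add(\nul)$, I would argue contrapositively. Suppose $|\scrA| < \add(\nul)$. By the characterization, there is a slalom $\varphi \in \mathcal{C}$ such that for every $x \in \scrA$ one has $x(k) \in \varphi(k)$ for all but finitely many $k$. I would then let Player I use the constant strategy $a_k = \varphi(k)$, ignoring Player II's moves. For any play $\seq{i_k : k \in \omega}$ that wins for Player II, the witness $x \in \scrA$ would force $\{k : i_k = 1\} = \{k : x(k) \notin \varphi(k)\}$, a finite set, contradicting the requirement that $i_k = 1$ happens infinitely often. So $\varphi$ yields a winning strategy for Player I, contradicting our standing assumption about $\scrA$.

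For $\aNI \leq \add(\nul)$, fix $\scrA$ of size $\add(\nul)$ that is not localized by any slalom (in the flexible sense above), and let $\sigma \colon 2^{<\omega} \to [\omega]^{<\omega}$ be an arbitrary strategy of Player I with $|\sigma(s)| = |s|+1$. I would build the ``envelope'' slalom
\[
\varphi_\sigma(k) = \bigcup \{ \sigma(s) : s \in 2^k \},
\]
so $|\varphi_\sigma(k)| \leq (k+1) \cdot 2^k$. By the Bartoszyński characterization (with this growth rate), there is $x \in \scrA$ with $x(k) \notin \varphi_\sigma(k)$ for infinitely many $k$. Define Player II's response $\bar{i}^x \in 2^\omega$ recursively by $i_k^x = 1$ iff $x(k) \notin \sigma(\bar{i}^x \upharpoonright k)$; this is well-defined because $\sigma(\bar{i}^x \upharpoonright k)$ depends only on $\seq{i_\ell^x : \ell < k}$. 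Since any such $\sigma(\bar{i}^x \upharpoonright k)$ is a subset of $\varphi_\sigma(k)$, whenever $x(k) \notin \varphi_\sigma(k)$ we get $i_k^x = 1$, so Player II plays $1$ infinitely often, and by construction $\{ k : i_k^x = 1 \} = \{ k : x(k) \notin \sigma(\bar{i}^x \upharpoonright k) \}$, witnessing Player II's victory. Hence $\sigma$ is not winning.

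The main obstacle is the flexibility of the slalom width: the argument naturally produces slaloms of width $(k+1) \cdot 2^k$ rather than the canonical width $k+1$, so one has to cite (or sketch) the folklore fact that varying the width within the ``diverges to infinity'' regime does not change $\add(\nul)$. The rest is a clean exchange between slaloms and Player I strategies, with the $(\textup{*})$-style recursive definition of $\bar{i}^x$ being the only subtle point.
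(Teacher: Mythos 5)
Your proof is correct, and its core is the same as the paper's: in the hard direction both arguments define Player II's reply recursively by $i_k = 1$ iff $x(k) \notin \sigma(\bar{i} \upharpoonright k)$ and note that this index set is then exactly the winning set witnessed by $x$. The difference lies in how the escaping real $x$ is obtained. The paper uses the equivalent form of Bartoszy\'nski's characterization in which a single real escapes countably many width-$(n+1)$ slaloms simultaneously (item (3) of Fact \ref{fact:addnull}), applies it to the countably many slaloms $\seq{\sigma(\bar{i} \upharpoonright k) : k \in \omega}$ indexed by the eventually zero plays $\bar{i} \in \zero$, and then derives a contradiction from the assumption that the recursively defined play lies in $\zero$. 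You instead absorb all of Player I's possible stage-$k$ moves into one envelope slalom of width $(k+1)2^k$ and invoke the robustness of the characterization under changing the width function; this makes the ``infinitely many $1$'s'' conclusion immediate, with no detour through $\zero$ and no contradiction. Both auxiliary facts are standard (both are in Bartoszy\'nski--Judah), but be aware that the width-robustness you need is not literally the Fact quoted in the paper, whose class $\mathcal{C}$ has fixed width $n+1$, and it does not follow from it by splitting the fat slalom into $2^k$ thin ones (escaping each thin slalom infinitely often does not yield simultaneous escape), so it genuinely requires its own citation or a short measure-theoretic argument. The easy direction, via the constant strategy induced by a localizing slalom, is the same in both treatments.
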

    Before proving this theorem, we recall the relationship between $\add(\nul)$ and slaloms.

    \begin{fact}[{{\cite[Theorem 4.11]{bartoszynski2010invariants}}}]\label{fact:addnull}
        The following are equivalent.
        \begin{enumerate}
            \item $\add(\nul) \le \kappa$.
            \item There is a family $\scrA \subseteq \omega^\omega$ of size $\le \kappa$ such that $(\forall \varphi \in \mathcal{C})(\exists x \in \scrA)(\exists^\infty n)( x(n) \not \in \varphi(n))$ holds.
            \item There is a family $\scrA \subseteq \omega^\omega$ of size $\le \kappa$ such that $(\forall f \in \mathcal{C}^\omega)(\exists x \in \scrA)(\forall m)(\exists^\infty n)(x(n) \not \in f(m)(n))$ holds.
        \end{enumerate}
    \end{fact}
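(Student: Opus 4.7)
The plan is to prove (3) $\Rightarrow$ (2) $\Rightarrow$ (1) $\Rightarrow$ (3). The middle equivalence (1) $\Leftrightarrow$ (2) is the classical Bartoszyński dual characterization of $\add(\nul)$ in terms of slalom localization.

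The implication (3) $\Rightarrow$ (2) is immediate: apply (3) to the constant sequence $f(m) = \varphi$ for all $m$. For (1) $\Leftrightarrow$ (2), I would invoke the proof in \cite[Chapter 2]{bartoszynski2010invariants}, which goes through a nontrivial combinatorial-measure-theoretic correspondence between null $G_\delta$ subsets of $2^\omega$ and slalom-localization events on $\omega^\omega$: one partitions $\omega$ into blocks of carefully chosen lengths and converts small-measure clopen covers into slaloms, with careful size bookkeeping, and vice versa. A crucial by-product of this correspondence is \emph{growth-rate invariance}: the minimum family size in (2) is the same whether one requires $|\varphi(n)| = n+1$ or only $|\varphi(n)| \le h(n)$ for an arbitrary unbounded $h \colon \omega \to \omega$; in both cases the minimum equals $\add(\nul)$.

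For (1) $\Rightarrow$ (3), I would leverage the $\sigma$-additivity of $\nul$ together with this growth-rate invariance. Let $\scrA$ be a witness of (2) obtained from (1); I claim the same $\scrA$ already witnesses (3). Given $\seq{\varphi_m : m \in \omega} \in \mathcal{C}^\omega$, the corresponding slalom-localization events form countably many null sets (under the Bartoszyński encoding). Their union is null by $\sigma$-additivity of $\nul$, and by the capturing half of Bartoszyński's theorem (applied at an enlarged growth rate $h \colon \omega \to \omega$ with $h(n) \to \infty$) this union is contained in a single slalom-localization event associated with some slalom $\psi$ satisfying $|\psi(n)| \le h(n)$. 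Applying the growth-rate-invariant form of (2) to $\psi$ yields $x \in \scrA$ with $(\exists^\infty n)(x(n) \notin \psi(n))$, which by the way $\psi$ captures the union forces $(\forall m)(\exists^\infty n)(x(n) \notin \varphi_m(n))$, as required.

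The main obstacle is establishing the growth-rate invariance of the characterization: one must show that the minimum cardinality in (2) is the same for every growth bound $h \to \infty$, not merely for $h(n) = n+1$. This is genuinely nontrivial --- in contrast, the dual invariants analogous to $\non$ and $\cov$ of the slalom-localization relation do depend on $h$ --- and it is what allows countably many slaloms to be combined into one (with an enlarged growth rate) in the (1) $\Rightarrow$ (3) step. It is proved in Bartoszyński's analysis via ``spreading'' and ``contraction'' operations on slaloms that translate between different growth rates while preserving the relevant escape and localization content.
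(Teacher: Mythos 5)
The paper does not prove this statement at all: it is imported as a black box, with a citation to Bartoszy\'nski's Theorem 4.11, and nothing in the paper depends on how it is proved. So your proposal is by necessity a different route: you take the classical two-way characterization $(1)\Leftrightarrow(2)$ as the cited input and then manufacture the countable-sequence form $(3)$ yourself, via growth-rate invariance. That decomposition is sound, $(3)\Rightarrow(2)$ by constant sequences is fine, and you correctly identify width-invariance of the localization characterization as the one genuinely nontrivial ingredient beyond the classical theorem; what your approach buys is an explanation of why form $(3)$ (which is what the paper actually uses, in Theorem \ref{thm:ani}) follows from the form of the theorem most commonly stated in the literature.

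Two repairs are needed, though. First, ``the same $\scrA$ already witnesses $(3)$'' is wrong as literally stated: growth-rate invariance equates the \emph{minimum cardinalities} for different width bounds, not the witnessing families, and a family witnessing $(2)$ at width $n+1$ need not escape wider slaloms. The fix is to fix the width bound $h(n)=(n+1)^2$ \emph{in advance}, and use invariance together with $(1)$ to choose $\scrA$ of size $\le\kappa$ witnessing the width-$h$ form; that single family then witnesses both $(2)$ and $(3)$. Second, the detour through null sets and $\sigma$-additivity to combine the $\varphi_m$ is both imprecise (localization events live in $\omega^\omega$, and the sense in which they are ``null'' requires an encoding you never specify) and unnecessary: the slalom $\psi(n)=\bigcup_{m\le n}\varphi_m(n)$ satisfies $\abs{\psi(n)}\le (n+1)^2$, and if $x(n)\notin\psi(n)$ for infinitely many $n$, then for each fixed $m$ all but finitely many of those escape points $n$ satisfy $n\ge m$, whence $x(n)\notin\varphi_m(n)$ infinitely often. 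With these two repairs your argument is a correct derivation of the Fact from the standard cited results, and is more informative than the paper's bare citation.
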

    
    \begin{proof}[Proof of Theorem \ref{thm:ani}]
        $\aNI \ge \add(\nul)$ holds by Fact \ref{fact:addnull}.
        We prove $\aNI \le \add(\nul)$.
        Take a witness $\scrA$ for (3) of Fact \ref{fact:addnull}.
        Now we want to prove that Player I has no winning strategy for the anti-localizing game with respect to $\scrA$.
        Take a strategy $\sigma \colon 2^{<\omega} \to [\omega]^{<\omega}$ of Player I. 
        Since $\scrA$ satisfies the condition in (3) of Fact \ref{fact:addnull}, we can take $x \in \scrA$ such that $(\exists^\infty n)(x(k) \not \in \sigma(\bar{i} \upharpoonright k)))$ for every $\bar{i} \in \zero$.

		We now put $\bar{i} \in 2^\omega$ by
		\[
		i_k = \begin{cases} 1 & \text{(if $x(i) \not \in \sigma(\bar{i} \upharpoonright k)$)} \\ 0 & \text{(otherwise)} \end{cases}
		\]
		If $\bar{i} \in \zero$, then $(\exists^\infty n)(x(k) \not \in \sigma(\bar{i} \upharpoonright k))$ by the choice of $x$.
		But this fact and the choice of $\bar{i}$ imply $\bar{i} \not \in \zero$. It's a contradiction. So $\bar{i} \not \in \zero$. Therefore $\bar{i}$ is a play of Player II that wins against the strategy $\sigma$ of Player I.
    \end{proof}
	
	\begin{thm}\label{thm:aNII}
		$\aNII = \cov(\meager)$ holds.
	\end{thm}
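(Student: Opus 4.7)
The plan is to prove the two inequalities $\aNII \ge \cov(\meager)$ and $\aNII \le \cov(\meager)$ separately.

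For $\aNII \ge \cov(\meager)$, I will view $X = \prod_{k \in \omega} [\omega]^{k+1}$ as a perfect Polish space (each factor is discrete and countable, so $X$ is homeomorphic to Baire space). Given any $\scrA$ with a winning Player II strategy $\tau$, I will assign to each $x \in \scrA$ the closed set
\[
V_x = \{ \varphi \in X : \text{for every } k,\ \tau(\varphi)(k) = 1 \iff x(k) \notin \varphi(k) \}.
\]
Winningness yields $X = \bigcup_{x \in \scrA} V_x$, and the key step will be to show each $V_x$ is nowhere dense. Given any basic open $U \subseteq X$ fixing a prefix $\varphi \upharpoonright m$, I will extend it to some $\varphi^* \in U$ by choosing $\varphi^*(k) \ni x(k)$ for every $k \ge m$ (possible since $|\varphi^*(k)| = k+1 \ge 1$). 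Then $x(k) \in \varphi^*(k)$ for all $k \ge m$, so if $\varphi^* \in V_x$ then $\tau(\varphi^*)(k) = 0$ for all $k \ge m$, contradicting the winning requirement that $\tau(\varphi^*)$ contain infinitely many $1$'s. Hence $V_x$ is nowhere dense, $X$ is covered by $|\scrA|$ nowhere dense sets, and $|\scrA| \ge \cov(\meager)$.

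For $\aNII \le \cov(\meager)$, I will use the characterization $\cov(\meager) = \min\{|F| : F \subseteq \omega^\omega,\ (\forall g \in \omega^\omega)(\exists f \in F)(\exists^\infty k)(f(k) = g(k))\}$ (cf.\ \cite{bartoszynski2010invariants}). Transferring this along the homeomorphism $X \cong \omega^\omega$, I obtain a family $\{\psi_\alpha : \alpha < \cov(\meager)\} \subseteq X$ such that for every $\varphi \in X$ some $\alpha$ has $(\exists^\infty k)(\psi_\alpha(k) = \varphi(k))$. For each $\alpha$ I fix $y_\alpha \in \omega^\omega$ with $y_\alpha(k) \notin \psi_\alpha(k)$ for every $k$, and set
\[
\scrA = \{ x_{\alpha, m, s} : \alpha < \cov(\meager),\ m \in \omega,\ s \in \omega^m \},
\]
where $x_{\alpha, m, s}(k) = s(k)$ for $k < m$ and $x_{\alpha, m, s}(k) = y_\alpha(k)$ for $k \ge m$; this family has cardinality $\cov(\meager)$. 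Player II's strategy will use a bookkeeping rule that at stage $k$, after seeing $a_0, \dots, a_k$, computes a candidate $\alpha^*_k$ from the running match-counts $|\{j \le k : \psi_\alpha(j) = a_j\}|$, and plays $1$ iff $y_{\alpha^*_k}(k) \notin a_k$.

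The hard part will be designing the bookkeeping so that for each $\varphi$ the sequence $(\alpha^*_k)_k$ eventually stabilizes at some $\alpha$ with $(\exists^\infty k)(\psi_\alpha(k) = \varphi(k))$. Once stabilized at stage $m$, Player II's bits for $k \ge m$ coincide with the escape pattern of $y_\alpha$ against $\varphi$, and the finite disagreement on $[0, m)$ is absorbed by choosing the appropriate $s \in \omega^m$ (so that $s(k) \in \varphi(k)$ iff the played $i_k$ is $0$); then $x_{\alpha, m, s} \in \scrA$ witnesses the win, and infinitely many $1$'s arise from the infinite matches $\psi_\alpha(k) = \varphi(k)$. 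A naive ``least $\alpha$ above a growing match-count threshold'' rule may fail to stabilize when many $\alpha$'s have large but finite match totals, so a priority-style refinement (for instance, restricting candidates to those $\alpha$ currently matching at step $k$, or to those lying in an initial segment of an enumeration that grows with $k$) will be needed to guarantee stabilization.
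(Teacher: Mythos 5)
Your first inequality, $\aNII \ge \cov(\meager)$, is correct as written: each $V_x$ is closed because the defining condition at stage $k$ depends only on $\varphi \upharpoonright (k+1)$, the covering $X=\bigcup_{x\in\scrA}V_x$ is exactly the winningness of $\tau$, and your extension of a basic open set to a play capturing $x(k)$ for all $k\ge m$ shows each $V_x$ has empty interior, since Player II would then have to play $0$ from stage $m$ on. This is the paper's own argument for that inequality in different clothing: the paper assumes $\abs{\scrA}<\cov(\meager)$ and uses Martin's axiom for countable posets to produce a filter meeting the dense sets $D_x$ of finite Player I positions, and the resulting generic play is precisely your point of $X$ lying outside all the $V_x$.

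The second inequality is where the proposal fails, and the failure is structural, not a missing refinement of the bookkeeping. Any strategy for Player II is a function on the countable set $\bigcup_n \prod_{i<n}[\omega]^{i+1}$ of finite histories, so your guessing rule takes only countably many values; let $C$ be its range. Your plan demands that for every play $\varphi$ the guess stabilize at some $\alpha\in C$ and that the bits thereafter track $y_\alpha$, so every winning witness would lie in the countable family $\scrA_0=\{x_{\alpha,m,s} : \alpha\in C,\ m\in\omega,\ s\in\omega^m\}$; that is, the strategy would be a winning strategy with respect to $\scrA_0$. But no countable family admits a winning strategy for Player II: enumerate $\scrA_0=\{x_n : n\in\omega\}$ and let Player I play any $a_k\in[\omega]^{k+1}$ with $\{x_n(k) : n\le k\}\subseteq a_k$; then $\{k : x_n(k)\notin a_k\}$ is finite for every $n$, so Player II cannot both match some $x_n$'s escape pattern and play $1$ infinitely often. (Equivalently, a winning strategy for a countable family contradicts your own first inequality, since $\cov(\meager)>\aleph_0$.) Hence no priority-style rule can make the guesses stabilize usefully; the infinitely-often-equal characterization of $\cov(\meager)$ is the wrong tool here. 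The paper instead uses the characterization quoted as Fact \ref{fact:covmeager} (Bartoszynski--Judah): there is a family $\scrA\subseteq\omega^\omega$ of size $\cov(\meager)$ such that every slalom $\varphi$ admits $x\in\scrA$ with $x(n)\notin\varphi(n)$ for \emph{all} $n$; with such a family the constant-$1$ strategy wins outright and no guessing is needed.
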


    Before proving this theorem, we recall the relationship between $\cov(\meager)$ and slaloms.
 
    \begin{fact}[{{\cite[Lemma 2.4.2]{bartoszynski1995set}}}]\label{fact:covmeager}
        The following are equivalent.
        \begin{enumerate}
            \item $\cov(\meager) \le \kappa$.
            \item \label{item:totalavoid} There is a family $\scrA \subseteq \omega^\omega$ of size $\le \kappa$ such that $(\forall \varphi \in \mathcal{C})(\exists x \in \scrA)(\forall n)( x(n) \not \in \varphi(n))$ holds.
        \end{enumerate}
    \end{fact}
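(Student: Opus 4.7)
My plan is to prove the two implications separately.

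For the easier direction $(2)\Rightarrow(1)$, I will directly construct a meager cover from $\scrA$. For each $x \in \scrA$, consider
\[
    M_x = \{y \in \omega^\omega : (\forall n \in \omega)\, y(n) \ne x(n)\}.
\]
This set is closed, as the intersection of the clopen sets $\{y : y(n) \ne x(n)\}$, and nowhere dense, since for any $s \in \omega^{<\omega}$ the basic clopen $[s]$ contains the proper subclopen $[s \append \langle x(\abs{s})\rangle]$, which is disjoint from $M_x$; hence $M_x$ is meager. To see that $\{M_x : x \in \scrA\}$ covers $\omega^\omega$, fix $g \in \omega^\omega$ and pick any slalom $\varphi_g \in \mathcal{C}$ with $g(n) \in \varphi_g(n)$ for every $n$ (possible since $\abs{\varphi_g(n)} = n+1 \ge 1$); by the hypothesis, there is $x \in \scrA$ with $x(n) \notin \varphi_g(n)$, hence $x(n) \ne g(n)$, for every $n$, so $g \in M_x$. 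This exhibits a cover by $\le \kappa$ meager sets, whence $\cov(\meager) \le \kappa$.

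For the harder direction $(1) \Rightarrow (2)$, my plan is to invoke the classical Bartoszy\'nski characterization $\cov(\meager) = \mathfrak{d}(\omega^\omega, \omega^\omega, \ne^*)$, where $f \ne^* g$ abbreviates $(\forall^\infty n)\, f(n) \ne g(n)$, producing an eventually different family $F$ of size $\le \kappa$, and then amplify $F$ into a slalom-avoiding family by a block-encoding. Fix an increasing sequence $k_0 < k_1 < \cdots$ with $k_{n+1} - k_n$ growing fast enough relative to the slalom widths, set $I_n = [k_n, k_{n+1})$, and fix bijections $\pi_n \colon \omega \to \omega^{I_n}$. For each $f \in F$ define $\tilde f \in \omega^\omega$ by $\tilde f \upharpoonright I_n = \pi_n(f(n))$, and let $\scrA$ be the closure of $\{\tilde f : f \in F\}$ under finite modifications, still of size $\le \kappa$. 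Given a slalom $\varphi$, build an auxiliary target $g_\varphi \in \omega^\omega$ so that ``$f(n) \ne g_\varphi(n)$'' forces $\pi_n(f(n))$ to avoid $\varphi$ at every coordinate of $I_n$; then select $f \in F$ with $f \ne^* g_\varphi$ and correct the finitely many initial bad blocks via a finite modification of $\tilde f$.

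The main obstacle is the amplification step: the ``bad'' set of tuples in $\omega^{I_n}$ (those hitting $\varphi$ at some coordinate in $I_n$) is countably infinite, so a single value $g_\varphi(n)$ cannot exclude it outright. One must choose the bijections $\pi_n$ so that the $\pi_n$-preimages of good tuples form a large enough subset of $\omega$ in a slalom-dependent but uniformly manageable way---exploiting the gap between the block size $\abs{I_n}$ and the cumulative slalom width $\sum_{k \in I_n}\abs{\varphi(k)}$---or alternatively use several auxiliary target functions $g_\varphi^{(0)}, g_\varphi^{(1)}, \dots$ and iterate the amplification before closing $\scrA$ under finite modifications, keeping its cardinality at $\kappa$.
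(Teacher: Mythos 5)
First, note that the paper itself gives no proof of this Fact at all: it is quoted verbatim from \cite[Lemma 2.4.2]{bartoszynski1995set}. So the comparison below is against the standard argument, not an in-paper one.

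Your direction $(2)\Rightarrow(1)$ is correct and is the easy half: the sets $M_x$ are closed and nowhere dense, and the covering argument via a slalom $\varphi_g$ with $g(n)\in\varphi_g(n)$ works (one routine remark: the paper defines $\cov(\meager)$ on $2^\omega$ while you cover $\omega^\omega$, so you should invoke the standard fact that the covering number of the meager ideal is the same for all perfect Polish spaces).

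The direction $(1)\Rightarrow(2)$, however, contains a genuine gap --- one you flag yourself but do not close, and neither of your proposed repairs can close it. (i) The bijection-tuning idea is impossible: the bijections $\pi_n$ must be fixed before $\varphi$ is given, and for \emph{any} bijection $\pi_n\colon\omega\to\omega^{I_n}$ and any slalom $\varphi$, the set $B_n(\varphi)=\{b\in\omega : (\exists k\in I_n)\,\pi_n(b)(k)\in\varphi(k)\}$ of bad codes is infinite, since a single bad value at one coordinate of $I_n$ extends to infinitely many tuples. Your requirement that ``$f(n)\ne g_\varphi(n)$ forces $\pi_n(f(n))$ to avoid $\varphi$ on $I_n$'' says precisely that the infinite set $B_n(\varphi)$ is contained in the singleton $\{g_\varphi(n)\}$. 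No disparity between $\abs{I_n}$ and $\sum_{k\in I_n}\abs{\varphi(k)}$ is relevant; the obstruction is not a counting problem. (ii) The several-targets idea is circular or unavailable: the theorem $\cov(\meager)=\mathfrak{d}(\ne^*)$ gives, for each \emph{single} target $g$, some $f\in F$ with $f\ne^* g$, but different targets may be served by different members of $F$. What you would need is a single $f\in F$ eventually different from countably many targets \emph{simultaneously}, with control over the finitely many exceptions --- and that statement is essentially a reformulation of the slalom-avoiding property you are trying to prove (given such an $f$ for the targets enumerating the columns of $\varphi$, one still has to manage the thresholds $N_i$ uniformly in $n$, which is exactly where the width $n+1$ bites). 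Iterating the single-target property does not help, because each application may return a different $f$.

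This missing step is not a presentational detail: the passage from ``eventually different from each single function'' to ``totally avoiding every slalom of width $n+1$'' is the entire mathematical content of the cited lemma. The known proof does not factor through $\mathfrak{d}(\ne^*)$ plus a coding layer; it reruns Bartoszy\'nski's analysis of meager covers (via the interval-partition characterization of meager sets) directly for finite sets, obtaining a family that \emph{eventually} avoids every slalom, and only then applies the closure under finite modifications. That last step --- the one part of your amplification that is correct --- is indeed how one upgrades ``for all but finitely many $n$'' to ``for all $n$,'' but everything before it has to be rebuilt.
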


    In addition, the following characterization is well-known.

    \begin{fact}\label{fact:martinsaxiom}
        The following are equivalent.
        \begin{enumerate}
            \item $\kappa < \cov(\meager)$.
            \item Martin's axiom for countable posets with $\kappa$-many dense subsets.
        \end{enumerate}
    \end{fact}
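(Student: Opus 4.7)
The plan is to establish both directions by relating filters on countable posets to points in a canonical Polish space whose meager-cover number equals $\cov(\meager)$. Write $\mathrm{MA}_\kappa(\text{countable})$ for the assertion in (2): for every countable poset $P$ and every family $\{D_\alpha : \alpha < \kappa\}$ of dense subsets of $P$, there is a filter $G \subseteq P$ meeting each $D_\alpha$. Both statements are trivially true for finite $\kappa$ and for $\kappa = \aleph_0$ (the latter by a diagonal construction for the MA side and by the Baire category theorem for the covering-number side), so the substantive case is $\aleph_1 \le \kappa$.

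For the implication $(2) \Rightarrow (1)$, I fix meager sets $M_\alpha \subseteq 2^\omega$ for $\alpha < \kappa$ and write each $M_\alpha = \bigcup_{n \in \omega} N_{\alpha,n}$ with $N_{\alpha,n}$ closed nowhere dense. For Cohen forcing $\mathbb{C} = 2^{<\omega}$ ordered by reverse extension, the set
\[
D_{\alpha,n} = \{ s \in \mathbb{C} : [s] \cap N_{\alpha,n} = \varnothing \}
\]
is dense in $\mathbb{C}$ because $N_{\alpha,n}$ is nowhere dense. Applying $\mathrm{MA}_\kappa(\text{countable})$ to $\mathbb{C}$ with this family of $\kappa \cdot \aleph_0 = \kappa$ many dense subsets yields a filter $G \subseteq \mathbb{C}$; the associated real $x = \bigcup G \in 2^\omega$ avoids every $N_{\alpha,n}$, and hence every $M_\alpha$. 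Therefore $\bigcup_{\alpha < \kappa} M_\alpha \ne 2^\omega$, which gives $\cov(\meager) > \kappa$.

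For $(1) \Rightarrow (2)$, given a countable poset $P$ and dense subsets $\{D_\alpha : \alpha < \kappa\}$, I first pass to the separative quotient of $P$; the quotient is again countable, and filters meeting the images of the $D_\alpha$ pull back to filters meeting the $D_\alpha$. Atoms contribute trivially (any filter concentrated on an atom meets every dense set), so I may assume $P$ is countable, separative and atomless. Consider the Polish space
\[
X = \{ f \in P^\omega : (\forall n)(f(n+1) \le f(n)) \},
\]
a closed subspace of $P^\omega$ with the discrete topology on $P$. Under separativity and atomlessness, $X$ is a perfect Polish space, so its meager-cover number equals $\cov(\meager)$. For each $\alpha$, the set $U_\alpha = \{ f \in X : (\exists n)(f(n) \in D_\alpha) \}$ is open, and by density of $D_\alpha$ together with atomlessness also dense in $X$; hence $X \setminus U_\alpha$ is closed nowhere dense. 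Since $\kappa < \cov(\meager)$, I can pick $f \in X$ outside $\bigcup_{\alpha < \kappa} (X \setminus U_\alpha)$, and then $\{ p \in P : (\exists n)(f(n) \le p) \}$ is a filter on $P$ meeting every $D_\alpha$.

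The main technical obstacle is the invariance step used in the second direction: the claim that the covering number of the meager ideal on $X$ coincides with $\cov(\meager)$. This rests on the standard fact that the covering numbers of the meager ideals on any two uncountable perfect Polish spaces agree, which follows either from the Baire category isomorphism theorem or by constructing continuous open surjections in both directions between $X$ and $2^\omega$ and transporting meager sets through them. With that invariance in hand, the remainder is routine bookkeeping with dense sets and generic filters.
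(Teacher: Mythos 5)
The paper itself gives no proof of this Fact --- it is recorded as ``well-known'' and used as a black box --- so there is no internal argument to compare yours against; what you propose is the standard folklore proof (Cohen forcing for one direction, the space of descending sequences of a countable atomless poset for the other), and its overall route is the right one. There is, however, one step that is false as written: the separative-quotient reduction. A filter $H$ on the quotient $Q$ meeting the images $\pi[D_\alpha]$ need not pull back to a filter on $P$, because $\pi^{-1}[H]$ need not be directed \emph{within itself}. For a counterexample, let $P=\{p,q\}\cup\{r_n : n\in\omega\}$ where each $r_n$ lies below both $p$ and $q$ and the $r_n$ are pairwise incompatible; then $\pi(p)=\pi(q)$, the singleton $H=\{\pi(p)\}$ is a filter on $Q$ meeting the image of the dense set $\{p,q\}\cup\{r_n : n\in\omega\}$, yet $\pi^{-1}[H]=\{p,q\}$ contains no common extension of $p$ and $q$. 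The standard repair is to enlarge the family by the countably many compatibility-deciding dense sets $E_{p,q}=\{r : r\le p,\ r\le q\}\cup\{r : r\perp p\}\cup\{r : r\perp q\}$, which forces directedness of the pullback. But in your argument the cleanest fix is simply to delete the step: separativity is never actually used. Your space $X$ of descending sequences is perfect as soon as $P$ is atomless (every finite descending sequence then has two distinct one-step extensions), the sets $U_\alpha$ are dense open using only density of the $D_\alpha$, and the upward closure of the range of a descending sequence is a filter in \emph{any} poset. So handle atoms exactly as you do, and otherwise run the argument on $P$ itself.

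Two further, minor points. In the direction $(2)\Rightarrow(1)$, the union $\bigcup G$ of the filter you obtain is an element of $2^\omega$ only if $G$ contains conditions of arbitrary length, so the dense sets $\{s\in 2^{<\omega} : |s|\ge n\}$, $n\in\omega$, must also be put into the family (harmless, since the substantive case has $\kappa$ infinite). Finally, the invariance of $\cov(\meager)$ across perfect Polish spaces, which you correctly isolate as the technical crux of $(1)\Rightarrow(2)$, is indeed a standard fact (every perfect Polish space contains a dense $G_\delta$ subspace homeomorphic to $\omega^\omega$, and comeager subspaces do not change the covering number), so citing it is appropriate for a statement the paper itself records as known.
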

    
	\begin{proof}[Proof of Theorem \ref{thm:aNII}]
		We first prove $\aNII \le \cov(\meager)$.
        Take a family $\scrA \subseteq \omega^\omega$ of size $\cov(\meager)$ that satisfies (\ref{item:totalavoid}) of Fact \ref{fact:covmeager}. 
		Then the strategy that plays $1$ always is a winning strategy for Player II.

        We next prove $\cov(\meager) \le \aNII$.
        Assuming $\kappa < \cov(\meager)$, we shall prove $\kappa < \aNII$.
        Fix a family $\scrA$ of size $\kappa$.
        Take an arbitrary strategy $\tau$ of Player II.
        We show that $\tau$ is not a winning strategy.

        We may assume that Player II plays the digit $1$ infinitely often along $\tau$, otherwise, $\tau$ is clearly not a winning strategy.

        Set $P = \bigcup_n \prod_{i < n} [\omega]^{i+1}$.
        For each $x \in \scrA$, we define a set $D_x$ as follows:
        \[
        D_x = \{ p \in P : (\exists k \in \dom(p))(x(k) \in p(k) \text{ and } \tau(p \upharpoonright (k+1)) = 1 \}.
        \]
        Then each $D_x$ is a dense subset of $P$, using the above assumption.

        Therefore, by Fact \ref{fact:martinsaxiom}, we can take a filter $G \subseteq P$ that intersects with all $D_x$'s. Put $g = \bigcup G$. Then if Player I plays $g$, then Player I wins against Player II, who obeys the strategy $\tau$.
    \end{proof}

    \begin{thm}
        $\aNIstar = \add(\nul)$ holds.
    \end{thm}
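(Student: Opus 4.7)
The inequality $\aNIstar \ge \add(\nul)$ is straightforward. Given a family $\scrA$ for which Player I lacks a winning strategy in the anti-localizing* game, for any slalom $\varphi \in \mathcal{C}$ the non-adaptive strategy $s \mapsto \varphi(\abs{s})$ cannot be winning, so some play $\bar{n} \in \scrA$ of Player II satisfies $(\exists^\infty k)(\bar{n}(k) \notin \varphi(k))$. Thus $\scrA$ witnesses condition (2) of Fact \ref{fact:addnull}, giving $\abs{\scrA} \ge \add(\nul)$.

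For the reverse inequality, I plan to imitate the construction used for $\dIstar \le \frakd$. Fix a bijection $\pi : \omega \to \omega^{<\omega}$ with $\abs{\pi(n)} \le n$ for every $n$, take $\scrA_0$ a witness of (3) of Fact \ref{fact:addnull} of size $\add(\nul)$, and for each $x \in \scrA_0$ define $x^* \in \omega^\omega$ self-referentially by $x^*(k) = x(\pi^{-1}(x^* \upharpoonright k))$. Set $\scrA = \{x^* : x \in \scrA_0\}$, which has cardinality at most $\add(\nul)$. Given a strategy $\sigma$ of Player I, the task is to exhibit $x \in \scrA_0$ for which $x^*(k) \notin \sigma(x^* \upharpoonright k)$ holds for infinitely many $k$ --- equivalently $x(m_k) \notin \sigma(\pi(m_k))$ for infinitely many $k$, where $m_k := \pi^{-1}(x^* \upharpoonright k)$. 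To produce such $x$, I would encode $\sigma$ as a countable family $\{f_t : t \in \omega^{<\omega}\}$ of slaloms (allowing the relaxed size bound $\abs{f_t(n)} \le n+1$), where each $f_t$ records $\sigma(\pi(n))$ on the indices $n$ with $\pi(n) \supseteq t$ and uses controlled dummy values elsewhere, and then invoke (3) to obtain a single $x \in \scrA_0$ that simultaneously avoids every $f_t$ infinitely often.

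The main obstacle, which is the delicate technical heart of the argument, is bridging ``infinitely often'' avoidance to the self-referential trajectory. In the analogous dominating* argument the cofinite-domination condition transports effortlessly along the trajectory, since any cofinite set intersects any infinite set cofinitely; here only an \emph{infinite} set of good indices is provided by Fact \ref{fact:addnull}, and \emph{a priori} the trajectory $\{m_k\}_{k \in \omega}$ could miss this set entirely. The role of (3) rather than the weaker (2) of Fact \ref{fact:addnull} is precisely to supply simultaneous anti-localization across a countable family of slaloms --- parametrized by initial segments $t$ --- so that for \emph{every} finite initial segment $t$ of $x^*$, infinitely many trajectory indices beyond $t$ fall in the $\sigma$-good set, thereby handing Player II the winning play $x^*$. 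Calibrating the slaloms $f_t$ to avoid trivialization of the $\exists^\infty$ quantifier while still pinning down enough of $\sigma$'s behavior along extensions of $t$ is where the careful bookkeeping will lie.
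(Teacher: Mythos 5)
Your first inequality is correct: against the non-adaptive strategy $s \mapsto \varphi(\abs{s})$ some member of $\scrA$ must yield a winning play for Player II, so $\scrA$ witnesses condition (2) of Fact \ref{fact:addnull} and $\aNIstar \ge \add(\nul)$ follows. (For comparison, the paper does not argue this theorem directly at all: it identifies $\aNIstar$ with the global, adaptive, predefined-function version of the evasion number from \cite[Section 10]{blass2010combinatorial} and quotes the known equality with $\add(\nul)$ proved there.)

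The reverse inequality, however, has a genuine gap, located exactly where you say the ``delicate technical heart'' is --- and your proposed mechanism does not close it. Simultaneous $\exists^\infty$-avoidance of the slaloms $f_t$ gives, for each $t$, infinitely many indices $n$ with $\pi(n) \supseteq t$ at which $x(n) \notin \sigma(\pi(n))$; but the winning condition requires avoidance at indices on the single branch $\{\pi^{-1}(x^* \upharpoonright k) : k \in \omega\}$, a set containing one index per length and determined self-referentially by $x$. An $\exists^\infty$ guarantee over the whole subtree $\{n : \pi(n) \supseteq t\}$ transfers to no prescribed thin subset of it, so nothing forces even one good index onto the trajectory; your final sentence asserts the transfer rather than proving it. This is not mere bookkeeping: the analogous $\dIstar \le \frakd$ argument works because domination is a $\forall^\infty$ statement, which restricts automatically to the trajectory, whereas Fact \ref{fact:addnull} only provides $\exists^\infty$; upgrading to $\forall^\infty$- (or total) avoidance puts you in the $\cov(\meager)$ characterization of Fact \ref{fact:covmeager}, and since $\cov(\meager)$ can consistently exceed $\add(\nul)$ that route cannot produce a family of size $\add(\nul)$. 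The standard proof instead converts the adaptive predictor into non-adaptive slaloms by collecting, for each stem $s$ and each $n$, all values $\sigma$ can predict at stage $n$ along extensions of $s$ captured at every intermediate stage; these sets have size on the order of $(n+1)\prod_{k=\abs{s}}^{n-1}(k+1)$ rather than $n+1$, and controlling this width blow-up (using $\add(\nul) \le \frakb$ together with a reindexing along an interval partition) is the actual content of the cited result. None of that machinery appears in your sketch, so as written the $\aNIstar \le \add(\nul)$ direction is not established.
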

    \begin{proof}
        Using terminology in \cite[Section 10]{blass2010combinatorial}, $\aNIstar$ is equal to the global, adaptive, prediction specified by the predefined function version of evasion number. Moreover, in the article, it was shown that this invariant is equal to $\add(\nul)$.
    \end{proof}

    \begin{thm}
        $\aNIIstar = \frakc$ holds.
    \end{thm}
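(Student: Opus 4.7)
The plan is to adapt the proof of Theorem~\ref{thm:bIIstar} almost verbatim, with one new ingredient in Case~1. Fix $\scrA \subseteq \omega^\omega$ together with a winning strategy $\tau$ for Player II in the anti-localizing* game with respect to $\scrA$, and let $T$ be the game tree determined by $\tau$: at positions of even length all $a_k \in [\omega]^{k+1}$ are legal successors (Player I's turn), while positions of odd length have the unique successor prescribed by $\tau$ (Player II's turn). The goal is to show $|\scrA| \geq \frakc$, from which $\aNIIstar = \frakc$ follows since trivially $|\scrA| \leq \frakc$.

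I would split into the same two cases as in Theorem~\ref{thm:bIIstar}. In \textbf{Case~1}, there is $\sigma \in T$ of even length and, for every odd $k \geq |\sigma|$, a value $m_k \in \omega$ such that $\tau'(k) = m_k$ for every $\tau' \in T$ extending $\sigma$ with $|\tau'| > k$. That is, Player II's responses past $\sigma$ are completely determined regardless of Player I's future moves. Unlike in the bounding* game, Player I cannot copy these values directly (his moves live in $[\omega]^{k+1}$, not $\omega$), but the remedy is trivial: after the prefix $\sigma$, let Player I play any $a_j \in [\omega]^{j+1}$ that contains $m_{2j+1}$. This is legal since $|a_j| = j+1 \geq 1$, and it forces $n_j = m_{2j+1} \in a_j$ for every sufficiently large $j$, contradicting the winning condition $(\exists^\infty j)(n_j \notin a_j)$.

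In \textbf{Case~2}, for every $\sigma \in T$ there is an odd $k \geq |\sigma|$ and extensions $\tau_0, \tau_1 \supseteq \sigma$ in $T$ with $\tau_0(k) \neq \tau_1(k)$. The perfect-subtree construction from Theorem~\ref{thm:bIIstar} transfers without change: one builds $\{\sigma_s : s \in 2^{<\omega}\} \subseteq T$ together with odd numbers $k_s \geq |\sigma_s|$ such that $\sigma_{s \append 0}(k_s) \neq \sigma_{s \append 1}(k_s)$. For each $f \in 2^\omega$, the branch $\sigma_f = \bigcup_n \sigma_{f \upharpoonright n}$ belongs to $[T]$, so since Player II wins along $\sigma_f$ the sequence $x_f := \langle \sigma_f(2j+1) : j \in \omega \rangle$ lies in $\scrA$. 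For distinct $f \neq g$ with largest common prefix $s$, the coordinate $j = (k_s - 1)/2$ witnesses $x_f(j) \neq x_g(j)$, so we obtain $\frakc$ many distinct members of $\scrA$.

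The only substantive new work beyond Theorem~\ref{thm:bIIstar} is the Case~1 contradiction, and I expect it to be essentially immediate: Player I's freedom to pick a whole set $a_j$ of size $j+1$ rather than a single natural number more than compensates for the advantage Player II would gain by committing to deterministic responses. Thus there is no real obstacle; the proof is essentially a reskinning of Theorem~\ref{thm:bIIstar}, and the only bookkeeping needed is the translation between positions $2j+1$ in the tree and the $j$-th coordinate of the resulting element of $\scrA$.
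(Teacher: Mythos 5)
Your proposal is correct and follows essentially the same two-case decomposition as the paper's proof: in Case 1 the paper has Player I play the singleton $\{n_k\}$ of the forced response (your version, padding to a legal set of size $j+1$ containing the forced value, is if anything slightly more careful about the rule $a_j \in [\omega]^{j+1}$), and in Case 2 both arguments build the same perfect subtree yielding $\frakc$ many distinct elements of $\scrA$. No gaps.
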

	\begin{proof}
		Fix $\scrA \subseteq \omega^\omega$ such that Player II has a winning strategy $\tau$ for the anti-localizing* game with respect to $\scrA$. We shall show that $\scrA$ is of size $\frakc$.
		Consider the game tree $T \subseteq \prod_{n < \omega} X(n)$ that $\tau$ determines, where $X(2n) = [ \omega ]^{< n + 1}$ and $X(2n + 1) = \omega$.
		
		First, assume the following.
		\begin{itemize}
			\item (Case 1) There is a $\sigma \in T$ such that for every odd $k \ge |\sigma|$, there is an $n_k < \omega$ such that for every $\tau \in T$ extending $\sigma$ with $|\tau| > k$, we have $\tau(k) = m_k$.
		\end{itemize}
		
		Fix the witness $\sigma, \seq{n_k : k \ge |\sigma|}$ for Case 1.
		
		Consider the next play.
		\begin{table}[H]
			\centering
			\begin{tabular}{l|llllllllllll}
				Player I  & $\sigma(0)$ &              & $\dots$ & $\sigma(|\sigma|-2)$ &               & $\{ n_{|\sigma|} \}$ &       & $\{ n_{|\sigma| + 2} \}$ &       & $\dots$ \\ \hline
				Player II &             & $\sigma(1)$ &       $\dots$    &                & $\sigma(|\sigma|-1)$ &       & $n_{|\sigma|}$ &       & $n_{|\sigma| + 2}$ &        
			\end{tabular}
		\end{table}
		
		Then the sequence defined by the play of Player II does not avoid the slalom defined by the play of Player I. So Player II loses. This is a contradiction.
		
		So Case 1 is false.
		Thus we have
		\begin{itemize}
			\item (Case 2) For every $\sigma \in T$, there is an odd number $k \ge |\sigma|$ such that for every $n < \omega$, there is $\tau \in T$ extending $\sigma$ with $|\tau| > k$ such that $\tau(k) \neq n$.
		\end{itemize}
        Note that there are $\tau_0, \tau_1 \supseteq \sigma$ with $|\tau_0|, |\tau_1| > k$ such that $\tau_0(k) \neq \tau_1(k)$ in Case 2.
  
		Now we can construct a subtree of $T$ in the following manner.
		First we put $\sigma_\varnothing = \varnothing$.
		Suppose we have $\seq{\sigma_s : s \in 2^{\le l}}$.
		Then for each $s \in 2^l$, we can take $\sigma_{s \append 0}, \sigma_{s \append 1} \supseteq \sigma_s$ and $k_s \ge |\sigma_s|$ such that $\sigma_{s \append 0}(k_s) \neq \sigma_{s \append 1}(k_s)$.
		
		Now for each $f \in 2^\omega$, we put $\sigma_f$ by $\sigma_f = \bigcup_{n \in \omega} \sigma_{f \upharpoonright n}$.
		
		For each $f \in 2^\omega$, we have $\sigma_f \in [T]$. So Player II wins at the play $\sigma_f$.
		So by the definition of the game, we can take $x_f \in \scrA$ such that $x_f(k) = \sigma_f(2k + 1)$.
		  It should be clear that if $f$ and $g$ are distinct elements of $2^\omega$, then we have $x_f \ne x_g$.
		Therefore we have $\abs{\scrA} = \frakc$.
    \end{proof}

    \section{Tallness games}\label{sec:tallness}
    In this section we introduce two game theoretic versions of the tallness property for ideals on countable sets, namely the tallness game and the tallness$^*$ game.
    We show that such games are related to the two well-known ideals $\mathcal{ED}_{\mathrm{fin}}$ and $\mathcal{ED}$ via the Katetov order.
    This will be done by proving that winning strategies in such games correspond either to  winning strategies in the HMM game or to a certain tree related property, both of which were studied by M. Hru\v{s}\'ak, D. Meza-Alc\'antara, and H. Minami in \cite{HMM2010} and by M. Hru\v{s}\'ak in \cite{katetovorderhrusak} respectively. Games related to filters and ideals have also played an important role in the analysis of cardinal invariants of the continuum. An example of this can be found in \cite{MR4270048}, where games related to filters are used to show the consistency of $\omega_1=\mathfrak{u}<\mathfrak{a}.$
    
    Recall that an ideal $\idealI$ on $\omega$ is said to be \textit{tall} if for every infinite set of $\omega$ has an infinite subset in $\idealI$.

    First we recall the definition of the ideals $\mathcal{ED}$ and $\mathcal{ED}_{\mathrm{fin}}$, the Katětov-Blass ordering $\le_{\mathrm{KB}}$ and the uniformity of ideals on $\omega$.

    \begin{defi}
        \begin{enumerate}
            \item $\mathcal{ED}$ is the ideal on $\omega \times \omega$ that is generated by vertical lines and graphs of functions from $\omega$ to $\omega$.
            \item $\mathcal{ED}_{\mathrm{fin}} = \{ A \cap \Delta : A \in \mathcal{ED} \}$, where $\Delta = \{ (m, n) \in \omega^2 : n \le m \}$.
            \item For ideals $\idealI, \idealJ$ on $X, Y$ respectively, we denote $\idealI \le_{\mathrm{K}} \idealJ$ iff there is a  function $f \colon Y \to X$ such that $f^{-1}(I) \in \idealJ$ for every $I \in \idealI$. We call the order $\le_{\mathrm{K}}$ the Katětov ordering.
            \item For ideals $\idealI, \idealJ$ on $X, Y$ respectively, we denote $\idealI \le_{\mathrm{KB}} \idealJ$ iff there is a finite-to-one function $f \colon Y \to X$ such that $f^{-1}(I) \in \idealJ$ for every $I \in \idealI$. We call the order $\le_{\mathrm{K}}$ the Katětov-Blass ordering.
            \item For an ideal on $\omega$, put $\non^*(\idealI) = \min \{ \abs{\mathcal{A}} : \mathcal{A} \subseteq [\omega]^\omega, (\forall I \in \idealI)(\exists A \in \mathcal{A}) \abs{A \cap I} < \aleph_0) \}$.
        \end{enumerate}
    \end{defi}

    Let $\idealI$ be an ideal on $\omega$.
	We call the following game the \textit{tallness game} with respect to $\idealI$:
	
	\begin{table}[H]
		\centering
		\begin{tabular}{l|llllll}
			Player I  & $n_0$ &       & $n_1$ &       & $\dots$ &     \\  \hline
			Player II &       & $i_0$ &       & $i_1$ &         & $\dots$
		\end{tabular}
	\end{table}

	Here, $\seq{n_k : k  \in \omega}$ is an increasing sequence of numbers in $\omega$ and $\seq{i_k : k  \in \omega}$ is a sequence of numbers in $2$.
	Player II wins when
	\begin{align*}
		\{ n_k :  k \in \omega, i_k = 1 \} \in \idealI \cap [\omega]^\omega.
	\end{align*}
The $tallness^*$ \textit{game} is defined in a similar way, but in this case Player II wins when at least one of the following two conditions holds:
\begin{itemize}
    \item $\{ n_k : k\in\omega\}\in \idealI^+$ and $\{ n_k :  k \in \omega, i_k = 1 \} \in \idealI \cap [\omega]^\omega$, or 
    \item $\{n_k : k\in\omega\}\in \idealI$.
\end{itemize}

    We can easily see that if Player I has no winning strategy for the tallness game with respect to $\idealI$, then $\idealI$ is a tall ideal.

    In the following proposition, we fix some bijection between $\omega$ and $\Delta$ and use this bijection implicitly.

    \begin{prop}
        Player II has a winning strategy for the tallness game with respect to $\mathcal{ED}_\mathrm{fin}$.
    \end{prop}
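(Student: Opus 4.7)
My plan is to give an explicit winning strategy for Player II that exploits the triangular structure of $\Delta$. Identify each $n_k$ played by Player I with its image $(m_k,l_k)\in\Delta$ under the fixed bijection between $\omega$ and $\Delta$. Player II's strategy is then: at stage $k$, play $i_k=1$ iff $m_k\notin\{m_0,\dots,m_{k-1}\}$. In other words, Player II selects precisely those moves of Player I that reveal a previously unseen first coordinate.

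To verify this is winning, I would check two things. \textbf{First}, that Player II plays $1$ infinitely often: each column $\{m\}\times\omega$ meets $\Delta$ in only $m+1$ points, so any bounded initial segment of columns catches only finitely many points of $\Delta$; since Player I's moves are distinct and infinite, the set of visited columns $\{m_k:k\in\omega\}$ cannot be bounded, and is therefore infinite, yielding infinitely many ``new column'' stages. \textbf{Second}, that the selected set $S=\{n_k:i_k=1\}$ lies in $\mathcal{ED}_\mathrm{fin}\cap[\omega]^\omega$: its infiniteness follows from the first point, and by construction $S$ has pairwise distinct first coordinates in $\Delta$, so it is contained in the graph of a partial function $\omega\to\omega$; any such graph extends to the graph of a total function, which is a generator of $\mathcal{ED}$, whence $S\in\mathcal{ED}$, and since $S\subseteq\Delta$ we conclude $S\in\mathcal{ED}_\mathrm{fin}$.

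I do not anticipate any real obstacle here: the only substantive ingredient is the finiteness of each column of $\Delta$, which forces Player I to use infinitely many distinct columns no matter how she plays, and this is exactly what makes the ``new column'' rule produce an infinite selection lying in $\mathcal{ED}_\mathrm{fin}$.
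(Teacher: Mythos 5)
Your proposal is correct and is essentially the paper's own proof: the paper uses exactly the same strategy (play $1$ iff the first coordinate of the $\Delta$-point is new), noting that the selected set is the graph of a partial function and hence in $\mathcal{ED}_{\mathrm{fin}}$. Your write-up merely adds the explicit verification, omitted in the paper, that the finiteness of each column of $\Delta$ forces infinitely many new-column stages, so the selected set is indeed infinite.
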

    \begin{proof}
        Consider the following strategy:
        \begin{itemize}
            \item if Player I played $(x, y)$ last and the first coordinate $x$ has appeared so far then return $0$.
            \item Otherwise, return $1$.
        \end{itemize}
        This is a winning strategy since the result is the graph of a single function.
    \end{proof}

    \begin{prop}
        Let $\idealI$ and $\idealJ$ be two ideals and suppose $\idealJ \le_\mathrm{KB} \idealI$.
        Then if Player II has a winning strategy for the tallness game with respect to $\idealJ$, then Player II also has a winning strategy for the tallness game with respect to $\idealI$. 
    \end{prop}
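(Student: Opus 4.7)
The plan is to simulate a play of the tallness game with respect to $\idealJ$ inside a play of the tallness game with respect to $\idealI$. Fix a finite-to-one function $f \colon \omega \to \omega$ witnessing $\idealJ \le_{\mathrm{KB}} \idealI$, together with a winning strategy $\tau$ for Player II in the tallness game with respect to $\idealJ$.

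I would define Player II's strategy in the $\idealI$-game as follows. The strategy maintains, as side information, a strictly increasing sequence $m_0 < m_1 < \cdots$ of virtual moves of Player I in a simulated $\idealJ$-game, together with the $\tau$-responses $j_0, j_1, \dots$. When Player I plays $n_k$ in the real game, compute $f(n_k)$. If $f(n_k)$ strictly exceeds every $m_l$ recorded so far, append it as the next virtual move $m_l$, query $\tau$ on $\seq{m_0, j_0, \dots, m_{l-1}, j_{l-1}, m_l}$ to obtain $j_l$, and have Player II play $i_k = j_l$; otherwise have Player II play $i_k = 0$.

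Next I would verify that the simulated sequence is infinite. Since $f$ is finite-to-one and $n_0 < n_1 < \cdots$ tends to infinity, we have $\sup_k f(n_k) = \infty$, so infinitely many $k$ produce a new maximum and advance the simulated sequence. Hence the simulated play is a legal infinite play in the $\idealJ$-game, and because $\tau$ is winning, the set $B = \{ m_l : j_l = 1 \}$ belongs to $\idealJ \cap [\omega]^\omega$.

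Finally I would analyze the output $A = \{ n_k : i_k = 1 \}$ in the real game. Each virtual move $m_l$ is produced by a unique index $k_l$ with $f(n_{k_l}) = m_l$ and $i_{k_l} = j_l$; all other $k$'s are assigned $i_k = 0$. Therefore $f(A) = B$, which gives $A \subseteq f^{-1}(B) \in \idealI$, while $f \upharpoonright A$ is a bijection onto the infinite set $B$, so $A$ is infinite. Thus Player II wins, and the described strategy is winning in the $\idealI$-game. There is no serious obstacle; the only point that needs attention is that the finite-to-one hypothesis on $f$ is exactly what guarantees that the simulated $\idealJ$-game receives infinitely many (necessarily increasing) Player~I moves.
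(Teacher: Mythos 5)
Your proposal is correct and follows essentially the same route as the paper: both simulate the $\idealJ$-game by passing along $f(n_k)$ exactly when it exceeds all previously recorded values, answering with $\tau$'s response there and $0$ elsewhere, and then pull the winning set back through $f^{-1}$. Your write-up is in fact slightly more explicit than the paper's on two points the paper leaves implicit — that finite-to-oneness guarantees the simulated play is infinite, and that $f$ restricted to the output set is a bijection onto an infinite member of $\idealJ$, so the output is an infinite member of $\idealI$.
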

    \begin{proof}
        Let $f \colon \omega \to \omega$ be a witness of $\idealJ \le_\mathrm{KB} \idealI$; a finite-to-one function such that $f^{-1}(J) \in \idealI$ for every $J \in \idealJ$.
        Fix a winning strategy $\tau$ of Player II for the tallness game with respect to $\idealJ$. Then we shall construct a winning strategy of Player II for the tallness game with respect to $\idealI$.

        For an increasing sequence $\bar{m} = \seq{m_0, m_1, \dots, m_k}$ of natural numbers, $\bar{m}^*$ denotes the subsequence of $\bar{m}$ obtained by the following manner:
        if $\bar{m} = \bar{n} \append \seq{m_k}$ and $f(m_k) > f(a)$ for a component $a$ of $\bar{n}^*$, then $\bar{m}^* = \bar{n}^* \append \seq{m_k}$, otherwise $\bar{m}^* = \bar{n}^*$.

        Define a strategy $\tau^*$ of Player II for the tallness game with respect to $\idealI$ as follows: if $\bar{m} = \bar{n} \append \seq{m_k}$ and $\bar{m}^* = \seq{m_{k_0}, m_{k_1}, \dots, m_{k_j}}$.
        \[
        \tau^*(\bar{m}) = \begin{cases} \tau(\seq{f(m_{k_0}), f(m_{k_1}), \dots, f(m_{k_j})}) & \text{(if $\bar{m}^* \ne \bar{n}^*$ holds)} \\ 0 & \text{(if $\bar{m}^* = \bar{n}^*$ holds)} \end{cases}
        \]

        Consider the play $\seq{m_0, m_1, \dots}$ of Player I.
        Let $\seq{m_{k_0}, m_{k_1}, \dots}$ be the subsequence obtained by the above fashion so that $f(m_{k_0}) < f(m_{k_1}) < \dots$.

        In the game with respect to $\idealJ$, Player II wins the following match:
        
         \begin{table}[H]
			\centering
			\begin{tabular}{l|lllll}
				Player I  & $f(m_{k_0})$ & & $f(m_{k_1})$ & & $\dots$
                \\ \hline
				Player II & & $\tau(f(m_{k_0}))$ & & $\tau(f(m_{k_0}), f(m_{k_1}))$ & $\dots$
			\end{tabular}
		\end{table}

        So we have $X := \{ f(m_{k_i}) : \tau(f(m_{k_0}), \dots, f(m_{k_i})) = 1 \} \in \idealJ$.
        By the choice of $f$, we have $f^{-1}(X) \in \idealI$.
        But $f^{-1}(X)$ contains the set $\{ m_{k_i} : \tau(f(m_{k_0}), \dots, f(m_{k_i})) = 1 \}$ and thus the last set is in $\idealI$.
        So $\tau^*$ is a winning strategy.
    \end{proof}

    \begin{cor}\label{cor:winstrofII}
        Let $\idealI$ be an ideal and suppose $\mathcal{ED}_\mathrm{fin} \le_\mathrm{KB} \idealI$. Then Player II has a winning strategy for the tallness game with respect to $\idealI$. \qed
    \end{cor}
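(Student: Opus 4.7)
The plan is to observe that this corollary is a direct combination of the two immediately preceding propositions and requires essentially no additional work. First I would invoke the earlier proposition which established that Player II has a winning strategy for the tallness game with respect to $\mathcal{ED}_\mathrm{fin}$ (via the strategy that outputs $1$ precisely when the first coordinate of the new point has not appeared before, ensuring the resulting set is the graph of a function and hence in $\mathcal{ED}_\mathrm{fin}$).

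Next I would apply the monotonicity proposition with $\idealJ = \mathcal{ED}_\mathrm{fin}$: since the hypothesis of the corollary is exactly $\mathcal{ED}_\mathrm{fin} \le_\mathrm{KB} \idealI$, and Player II has a winning strategy in the tallness game with respect to $\mathcal{ED}_\mathrm{fin}$, the monotonicity result transfers that winning strategy to one for the tallness game with respect to $\idealI$. No obstacle is expected here, since the two ingredients have already been established and chain together immediately; this is why the statement carries a \qed directly in its declaration rather than a separate proof block.
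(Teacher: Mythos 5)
Your proposal is correct and is exactly the intended derivation: the corollary follows by chaining the proposition that Player II wins the tallness game for $\mathcal{ED}_\mathrm{fin}$ with the Kat\v{e}tov--Blass monotonicity proposition applied with $\idealJ = \mathcal{ED}_\mathrm{fin}$. This matches why the paper marks the statement with \qed and gives no separate proof.
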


    \begin{lem}\label{lem:treeextending}
        Fix an ideal $\idealI$ on $\omega$ and suppose that Player II has a winning strategy for the tallness game with respect to $\idealI$ and fix such a strategy.
        Consider the game tree $T$ that the strategy determines.
        Then, for every $\sigma \in T$, there is $\tau \supseteq \sigma$ of even length such that for every $n > \tau(\abs{\tau}-2)$, we have $\tau \append \seq{n, 1} \in T$.
    \end{lem}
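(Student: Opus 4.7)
The plan is to argue by contradiction. Suppose the conclusion fails. Then there exists $\sigma \in T$ such that for every even-length $\tau \supseteq \sigma$ with $\tau \in T$, one can find an $n > \tau(\abs{\tau}-2)$ with $\tau \append \seq{n,1} \notin T$, which (since $T$ is the strategy tree) is equivalent to saying that the strategy at the position $\tau \append \seq{n}$ returns $0$.

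The first step is a normalization: by extending $\sigma$ along the strategy by at most one move, I may assume $\abs{\sigma}$ is even. Any even-length extension of this new $\sigma$ is still an even-length extension of the original one, so the failure hypothesis is preserved.

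Next I would iteratively construct a play of Player I that defeats the strategy. Set $\tau_0 := \sigma$. Given $\tau_k \in T$ of even length extending $\sigma$, apply the failure hypothesis to obtain some $n_k > \tau_k(\abs{\tau_k}-2)$ (or arbitrary $n_k$ if $\abs{\tau_k}=0$) such that the strategy responds with $0$, that is, $\tau_k \append \seq{n_k, 0} \in T$; then set $\tau_{k+1} := \tau_k \append \seq{n_k, 0}$. By construction the $n_k$'s form a strictly increasing sequence, so they constitute a legal continuation of the initial Player I moves inside $\sigma$.

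The finishing step is to read off the resulting infinite play. Player II's responses from stage $\abs{\sigma}/2$ onward are all $0$, so the entire set $\{n_k : i_k = 1\}$ consists only of the $1$-responses appearing in $\sigma$ itself, hence is finite. Thus this set is not in $[\omega]^\omega$, so Player II loses, contradicting the hypothesis that the strategy is winning. The main thing to be careful about is bookkeeping around the parity of $\abs{\sigma}$ and the edge case $\abs{\sigma}<2$; the combinatorial core of the argument is simply that a failure of the lemma lets Player I route every play into an infinite tail of $0$-responses, which trivially defeats the tallness winning condition.
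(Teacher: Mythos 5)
Your proof is correct and follows essentially the same route as the paper's: negate the conclusion, use the fact that in the strategy tree $\tau\append\seq{n,1}\notin T$ forces $\tau\append\seq{n,0}\in T$, and iterate this to build a legal increasing play of Player I against which Player II answers $0$ from some point on, so that $\{n_k : i_k=1\}$ is finite and the winning condition fails. Your extra bookkeeping (normalizing $\abs{\sigma}$ to be even, handling $\abs{\tau_k}=0$) only makes explicit what the paper leaves implicit.
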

    \begin{proof}
        Let us deny the conclusion.
        Then we have the following.
        \begin{itemize}
            \item There is a $\sigma \in T$ such that for every $\tau \supseteq \sigma$ with even length there is some $n_{\tau} > \tau(\abs{\tau} - 1)$ such that $\tau \append \seq{n_\tau, 0} \in T$.
        \end{itemize}
         
         So we fix the witnesses $\sigma\in T$ and $\{ n_\tau \mid \tau \supseteq \sigma \text{ of even length} \}$. We consider the following play.

         \begin{table}[H]
			\centering
			\begin{tabular}{l|llllllllllll}
				Player I  & $\sigma(0)$ &              & $\dots$ & $\sigma(\abs{\sigma}-2)$ &               & $n_{\sigma}$ &       & $n_{\sigma \append \seq{n_{\sigma}, 0}}$ &       & $\dots$ \\ \hline
				Player II &             & $\sigma(1)$ &       $\dots$      &               & $\sigma(\abs{\sigma}-1)$ &       & $0$ &       & $0$ &        
			\end{tabular}
		\end{table}

        Player II plays 0 all but finitely many, so this is a contradiction.
    \end{proof}

    Let $\idealI$ be an ideal on $\omega$.
	Call the following game the \textit{HMM game} (short for Hrušák--Meza--Minami game) with respect to $\idealI$:
	
	\begin{table}[H]
		\centering
		\begin{tabular}{l|llllll}
			Player I  & $F_0$ &       & $F_1$ &       & $\dots$ &     \\  \hline
			Player II &       & $n_0$ &       & $n_1$ &         & $\dots$
		\end{tabular}
	\end{table}

	Here, $\seq{F_k : k  \in \omega}$ is a sequence of finite sets of elements in $\omega$ and $\seq{n_k : k \in \omega}$ is a sequence of numbers in $\omega$ such that $n_k \not \in F_k$.
	Player I wins when
	\begin{align*}
		\{ n_k : k \in \omega \} \in \mathcal{I}.
	\end{align*}

    This game was invented in \cite{HMM2010}.
    We will show that our tallness games and HMM games are equivalent.
    
    \begin{thm}For an ideal $\mathcal{I}$ in $\omega,$ the following are equivalent:
    \begin{enumerate}
    \item Player I has a winning strategy for the HMM game with respect to $\mathcal{I}$.
    \item Player II has a winning strategy for the tallness game with respect to $\mathcal{I}$.
        \item $\mathcal{ED}_{\mathrm{fin}}\leq_{\mathrm{KB}}\mathcal{I}.$
    \end{enumerate}
    \end{thm}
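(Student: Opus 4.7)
The plan is to close the loop $(3) \Rightarrow (2) \Rightarrow (1) \Rightarrow (3)$. The first implication is exactly Corollary~\ref{cor:winstrofII}. The last implication $(1) \Rightarrow (3)$ is part of the characterization obtained by Hru\v{s}\'ak, Meza-Alc\'antara, and Minami in \cite{HMM2010}, so I would simply cite it. Thus the only new work is the implication $(2) \Rightarrow (1)$, for which Lemma~\ref{lem:treeextending} has been explicitly prepared.

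For $(2) \Rightarrow (1)$, fix a winning strategy $\tau$ for tallness Player II and let $T$ be the associated game tree. I would describe a winning strategy for HMM Player I by recursively building a branch of $T$ in which every pick $n_k$ of HMM Player II appears as a Player~I move of the simulated tallness game to which $\tau$ responds $1$. Start with $p_0 := \varnothing$. Given $p_k \in T$ of even length, apply Lemma~\ref{lem:treeextending} to obtain $\sigma_k \supseteq p_k$ of even length together with threshold $m_k := \sigma_k(\abs{\sigma_k}-2)$ such that $\sigma_k \append \seq{n,1} \in T$ for every $n > m_k$. HMM Player I then plays $F_k := \{0, 1, \dots, m_k\} \cup \{n_0, \dots, n_{k-1}\}$; whatever $n_k \notin F_k$ HMM Player II chooses satisfies $n_k > m_k$, so we may set $p_{k+1} := \sigma_k \append \seq{n_k, 1} \in T$. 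The inclusion of $\{n_0, \dots, n_{k-1}\}$ in $F_k$ guarantees that the Player~I moves recorded in $p_\infty := \bigcup_k p_k$ stay strictly increasing, so $p_\infty$ is a legitimate infinite run of the tallness game consistent with $\tau$.

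Because $\tau$ is winning, the set $S$ of Player~I moves along $p_\infty$ that receive response $1$ belongs to $\idealI \cap [\omega]^\omega$. By construction $\{n_k : k \in \omega\} \subseteq S$, hence $\{n_k : k \in \omega\} \in \idealI$, so the strategy just described is winning for HMM Player~I. The only place any care is needed is the bookkeeping that forces $n_k > m_k$ and preserves monotonicity of the tallness Player~I moves inside $p_\infty$; everything else is delivered directly by Lemma~\ref{lem:treeextending}.
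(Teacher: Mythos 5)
Your proposal is correct and takes essentially the same approach as the paper: $(3)\Rightarrow(2)$ is Corollary~\ref{cor:winstrofII}, $(1)\Rightarrow(3)$ is cited from \cite{HMM2010}, and your $(2)\Rightarrow(1)$ is the paper's construction via Lemma~\ref{lem:treeextending}. The only (harmless) differences are that the paper additionally arranges the intermediate responses to be $0$ so that the winning set equals $\{n_k : k\in\omega\}$ exactly, while you settle for $\{n_k : k\in\omega\}\subseteq S$ and invoke downward closure of the ideal, and that the paper also proves $(1)\Rightarrow(2)$ directly, which your three-implication cycle renders unnecessary.
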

    
    \begin{proof} First, we prove (1) $\to$ (2).
    Fix a strategy $\tau \colon \omega^{<\omega}\longrightarrow [\omega]^{<\omega}$ for Player I in the HMM game testifying this fact. We define a strategy $\tau^*$ for Player II in tallness game as follows; Before the game starts, Player II will define $\overline{m}_0$ as the empty sequence. Now, suppose that we are on the $k$-th turn and Player II has already defined a finite sequence $\overline{m}_k\in\omega^{<\omega}$. For a given play $n_k$ of Player I, if $n_k\leq \max \tau(\overline{m}_k)$ then Player II answers with $0$ and puts $\overline{m}_{k+1}=\overline{m}$. Otherwise, Player II answers with $1$ and defines $\overline{m}_{k+1}$ as $\overline{m}_{k}^\frown \langle n_k\rangle$.
    
    Consider a match for the tallness game in which Player II played by following $\tau^*$ and let $\langle \overline{m}_k\rangle_{k\in\omega}$ be the associated sequence constructed throughout that match.
    \begin{table}[H]
    		\centering
    		\begin{tabular}{l|llllll}
    			Player I  & $n_0$ &       & $n_1$ &       & $\dots$ &     \\  \hline
    			Player II &       & $i_0$ &       & $i_1$ &         & $\dots$
    		\end{tabular}
    	\end{table}
    From construction, it should be clear that $\langle n_k :  i_k=1\rangle=\overline{m}:=\bigcup\limits_{k\in\omega}\overline{m}_k$. Furthermore, the sequence $\overline{m}$ codes the plays of  Player II in a match of the HMM game in which Player I played by following $\tau$. As $\tau$ is a winning strategy, it must happen that $\{n_k : i_k=1\}\in \mathcal{I}.$
    
    Next, we prove (2) $\to$ (1).
    Fix a game tree $T\subseteq \omega^{<\omega}$ associated with a winning strategy for Player II in the tallness game. We define a strategy for Player II in the HMM game as follows;  Before the game starts, Player I defines $\sigma_{-1}$ as the empty sequence. Now, suppose that we are $k$-th  turn Player I has already defined $\sigma_{k-1}\in T$ to be a sequence of odd length.  Player I now defines $\sigma'_k\in T$ to be a sequence of even length extending $\sigma'_k$ and such that for any $n>\sigma'_k(|\sigma'_k|-2)$ we have that $\sigma'_k {}^\frown \langle n,1\rangle \in T$. This is possible due to Lemma \ref{lem:treeextending}. It is easy to see that such $\sigma'_k$ can be taken in such way that $\sigma'_k(j)=0$ for each odd $|\sigma_{k-1}|<j<|\sigma'_k|.$ Now, Player I plays $\sigma'_k(|\sigma'_k|-2)$ (considered a finite subset of $\omega)$ and when Player II responds with some $n_k\in \omega$, Player I defines $\sigma_k$ as $\sigma'_k {}^\frown \langle n_k,1\rangle$.
    Consider a match for the HMM game in which Player I played by following the previously defined strategy and let $\langle\sigma_k\rangle_{k\in\omega}$ be the sequence constructed along the game. 
    
    \begin{table}[H]
    		\centering
    		\begin{tabular}{l|llllll}
    			Player I  & $F_0$ &       & $F_1$ &       & $\dots$ &     \\  \hline
    			Player II &       & $n_0$ &       & $n_1$ &         & $\dots$
    		\end{tabular}
    	\end{table}
    From the construction, it should be clear that $\sigma=\bigcup\limits_{k\in\omega}\sigma_k$ is a branch through $T$, which means that $\{\sigma(j) : j\text{ is even and }\sigma(j+1)=1\}$ is an infinite member of $\mathcal{I}$. By the way in which  the strategy was defined, this element of the ideal must be equal to the set $\{n_k : k\in\omega\}$ which means that the match was won by Player II.

    (1) $\to$ (3) was proven in \cite{HMM2010}.

    (3) $\to$ (2) is just Corollary \ref{cor:winstrofII}.
    \end{proof}
    
    \begin{thm}\label{theoremtallness}For an ideal $\mathcal{I}$ in $\omega$, the following are equivalent:
    \begin{enumerate}
    \item Player II has a winning strategy for the HMM game with respect to $\mathcal{I}$.
    \item Player I has a winning strategy for the tallness game with respect to $\mathcal{I}$.
    \item There is an infinite branching tree $T \subseteq \omega^{<\omega}$ such that $f[\omega] \in \mathcal{I}^+$ for every $f \in [T]$.
    \item $\non^*(\mathcal{I}) = \aleph_0$.
    \end{enumerate}
    \end{thm}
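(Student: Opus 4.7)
My plan is to establish the four-way equivalence via the cycle $(1) \Rightarrow (3) \Rightarrow (4) \Rightarrow (1)$ together with the equivalence $(2) \Leftrightarrow (3)$, using condition (3) as the tree-theoretic bridge between the two games and the cardinal characteristic $\non^*(\mathcal{I})$.

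For $(1) \Rightarrow (3)$, I would fix a winning Player II HMM strategy $\tau$ and let $T \subseteq \omega^{<\omega}$ consist of all $\seq{a_0,\ldots,a_{\ell-1}}$ for which some $N_0 < N_1 < \cdots < N_{\ell-1}$ gives $a_j = \tau(\{0,\ldots,N_0\},\ldots,\{0,\ldots,N_j\})$ for each $j < \ell$. Infinite branching is immediate: letting $N_\ell$ range above $N_{\ell-1}$ forces $\tau(\ldots,\{0,\ldots,N_\ell\}) > N_\ell$ and so produces infinitely many distinct labels. For a branch $f \in [T]$, the auxiliary tree of all $\bar{N}$ witnessing initial segments of $f$ is finitely branching at each level (since the constraint $\tau(\ldots,\{0,\ldots,N_\ell\}) = f(\ell)$ forces $N_\ell < f(\ell)$), so K\"onig's lemma lifts $f$ to a full HMM play whose $\tau$-responses are exactly $f$, forcing $f[\omega] \in \mathcal{I}^+$. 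For $(3) \Rightarrow (4)$, given such a $T$ I take $\mathcal{A} = \{\suc_T(\sigma) : \sigma \in T\}$, which is countable and consists of infinite sets; if some $I \in \mathcal{I}$ met every $\suc_T(\sigma)$ in an infinite set, a straightforward recursion picking $a_{k+1} \in \suc_T(\sigma_k) \cap I$ would build a branch $f$ with $f[\omega] \subseteq I \in \mathcal{I}$, contradicting $f[\omega] \in \mathcal{I}^+$. For $(4) \Rightarrow (1)$, given $\{A_n : n \in \omega\}$ witnessing $\non^*(\mathcal{I}) = \aleph_0$, I would have Player II cycle through indices $n(k)$ so that each $n$ recurs infinitely often and play at turn $k$ any element of $A_{n(k)} \setminus (F_k \cup \{n_0,\ldots,n_{k-1}\})$, which is nonempty by infinitude of $A_{n(k)}$; then $\{n_k\}$ meets every $A_n$ in an infinite set, contradicting $\non^*$ if $\{n_k\}$ were to lie in $\mathcal{I}$.

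For $(2) \Rightarrow (3)$, I would fix a winning Player I tallness strategy $\sigma$ and let $T$ consist of the sequences of ``selected'' plays (the values $n_k$ at positions where $i_k = 1$) realized by partial plays against $\sigma$; infinite branching is forced by letting Player II play $0$ for varying $p \in \omega$ before the next $1$, giving strictly increasing next values $\sigma(\bar{i}\append\seq{0}^p)$ via monotonicity of $\sigma$. Strict monotonicity also forces any branch $f \in [T]$ to admit a uniquely reconstructible binary realization (the $p$'s are forced one by one), so $f[\omega]$ equals the selected set of that play and is therefore in $\mathcal{I}^+$. For $(3) \Rightarrow (2)$, Player I walks the tree: maintain a current node $\sigma_k \in T$, at turn $k$ pick any child-label $n_k > n_{k-1}$ (possible by infinite branching) and play it, then set $\sigma_{k+1}=\sigma_k\append\seq{n_k}$ when $i_k=1$ and $\sigma_{k+1}=\sigma_k$ otherwise. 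If Player II selects infinitely often the walked branch is in $[T]$ with range precisely $\{n_k : i_k = 1\} \in \mathcal{I}^+$; otherwise the selected set is finite, so Player II loses anyway.

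The main obstacle I anticipate lies in the lifting step of the two tree constructions $(1) \Rightarrow (3)$ and $(2) \Rightarrow (3)$: one needs the branches of $T$ to correspond to actual game plays against the given strategies, not merely to combinatorial paths. In both cases this is handled by a monotonicity trick baked into the definition of $T$---restricting to initial-segment Player I moves $F_j = \{0,\ldots,N_j\}$ in the HMM case so that the tree of realizations becomes finitely branching (putting K\"onig within reach), and exploiting the strict monotonicity of Player I's plays in the tallness case so that each binary realization is forced to be unique. Once these trees are shown to be well-behaved, the remaining steps are essentially bookkeeping.
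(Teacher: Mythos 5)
Your proposal is correct, and it reaches the theorem by a genuinely different decomposition than the paper. The paper imports (1) $\leftrightarrow$ (3) and (3) $\to$ (4) from \cite{HMM2010}, and then supplies three implications of its own: (4) $\to$ (3) by building a uniform tree whose $k$-th level successor sets are the tails $X_n \setminus m$ of the witnessing family; (3) $\to$ (2) by having Player I walk the tree exactly as in your last paragraph; and (2) $\to$ (1) by a direct game simulation in which Player II, running the tallness-game strategy on the side, answers $0$ until Player I's increasing moves exceed $\max F_k$ and then copies that move into the HMM game. You instead close the cycle (1) $\to$ (3) $\to$ (4) $\to$ (1) and attach (2) through (2) $\leftrightarrow$ (3): you prove (1) $\to$ (3) from scratch by restricting to initial-segment moves $\{0,\dots,N_j\}$ and applying K\"onig's lemma, you prove (2) $\to$ (3) by extracting the tree of selected values from Player I's tallness strategy, and you prove (4) $\to$ (1) by letting Player II diagonalize against the $\non^*$ witness inside the HMM game. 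The two delicate points you flag are indeed the ones that need care, and both of your arguments for them are sound: the auxiliary tree of witnesses is finitely branching because the constraint $f(\ell) = \tau(\dots,\{0,\dots,N_\ell\})$ forces $N_\ell < f(\ell)$, and strict monotonicity of Player I's moves in the tallness game pins down a unique binary realization of each node, so branches of your tree really are plays against the given strategies. The net effect is that your proof is self-contained where the paper cites \cite{HMM2010}, at the cost of two extra lifting arguments that the paper's direct game simulations avoid; the (3) $\to$ (2) direction is essentially identical in both.
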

    \begin{proof}
        The paper \cite{HMM2010} mentioned (1) $\leftrightarrow$ (3) $\rightarrow$ (4).
        
        We first show (3) $\to$ (2).
        Fix a tree $T$ witnessing (3).
        In the tallness game, first play the smallest child of the root. Until Player II plays $1$, play children of the root in ascending order.
        When Player II plays $1$, move to the children of the node Player I said previously. Repeat this process.
        This strategy is a winning strategy of Player I for the tallness game with respect $\idealI$.

        We next prove (2) $\to$ (1).
        Fix a winning strategy for the tallness game with respect to $\mathcal{I}$.
        Suppose Player I plays $F_0 \in [\omega]^{<\omega}$ for the HMM game.
        Until the play of Player I is greater than $\max F_0$, let Player II play $0$ in tallness game.
        When the play $n_{k_0}$ of Player I is greater than $\max F_0$, then let Player II play $1$ in tallness game and copy $n_{k_0}$ into the play of Player II in HMM game.
        Repeat this process.
        This strategy is a winning strategy of Player II for the HMM game with respect $\idealI$.
        This is because $\{ n_{k_i} : i \in \omega \} \in \idealI^+$, since Player I wins the tallness game.
        
        Finally, we prove (4) $\to$ (3).
        Let $\seq{X_n : n \in \omega}$ be a witness of $\non^*(\idealI) = \aleph_0$. That is, for every $I \in \idealI$, there is $n$ such that $\abs{X_n \cap I} < \aleph_0$.
        Consider the family $\seq{X_n \setminus m : n, m \in \omega}$ and rearrange this family into a sequence $\seq{Y_k : k \in \omega}$  of order type $\omega$.
        Then we have for every $I \in \idealI$, there is $k$ such that $Y_k \cap I = \varnothing$.
        Consider the uniform tree whose nodes in $k$-th level have successors $Y_k$.
        Then for every $f \in [T]$, we have $f[\omega] \in \idealI^+$.
    \end{proof}

    \begin{thm}For an ideal $\idealI$ in $\omega$, the following are equivalent:
    \begin{enumerate}
    \item Player I does not have winning strategy for the tallness* game with respect to $\idealI$.
    \item For any sequence $\langle X_n\,:n<\omega\rangle \subseteq \idealI^+$ there is $I\in \mathcal{I}$ such that $|I\cap X_n|=\omega$ for any $n\in\omega$.
    \end{enumerate}
    \begin{proof}
To see that $(1)$ implies $(2)$, let $\langle X_n\,:n<\omega\rangle \subseteq \idealI^+$ be a sequence of positive sets, and for each $n\in\omega$, increasingly enumerate $X_n$ as $\langle x_n^j\rangle_{j\in\omega}$. Finally, let $\phi:\omega\longrightarrow \omega$ be such that $\phi^{-1}[\{n\}]$ is infinite for each $n$. We define a strategy for Player I as follows: At the first turn, Player I defines $s_0=0$ and plays $\min X_{\phi(s_0)}$.  Now suppose that the $k$-th turn of the given match is finished and both players played according to the following table:
\begin{table}[H]
    		\centering
    		\begin{tabular}{l|llllll}
    			Player I  & $n_0$ &       & $\dots$ &       & $n_k $&     \\  \hline
    			Player II &       & $i_0$ &       & $\dots$ &         & $i_k$
    		\end{tabular}
    	\end{table}

If $i_k=0$, Player I defines $s_{k+1}$ as $s_k$. Otherwise, Player I let $s_{k+1}=s_k+1.$ Then Player I plays $n_{k+1}=\min\{m\in X_{\phi(s_{k+1})}\,:\,m>n_k\}$. This ends the definition of the strategy.\\
By the hypothesis, there is a match in which Player I played according to the previous strategy but Player I lost.

\begin{table}[H]
    		\centering
    		\begin{tabular}{l|llllll}
    			Player I  & $n_0$ &       & $n_1$ &       & $\dots$ &     \\  \hline
    			Player II &       & $i_0$ &       & $i_1$ &         & $\dots$
    		\end{tabular}
    	\end{table}
Observe that Player II must have played $1$ infinitely many times.
Otherwise, the sequence $\langle s_k \rangle_{k\in \omega}$ would be eventually constant, play with value $m$.
This would mean that $\{ n_k\,:\,k\in\omega\}=^* X_m$ which is a contradiction to the winning condition of Player II. 
We conclude $I=\{ n_k\,:\,i_k=1\}$ is an element of $\idealI$ and by definition of $\phi$ and the previous observation it is straightforward that $I\cap X_n$ is infinite for each $n$.

We now show (2) implies (1).
For this, suppose towards a contradiction that $\sigma\colon \omega^{<\omega}\to \omega$ is a winning strategy for Player I.
Let $M$ be a countable elementary submodel of a large enough $H(\lambda)$ with $\sigma,\idealI\in M$.
Applying the hypothesis (2) to the countable set $M \cap \idealI$, we can get $I \in \idealI$ such that $I\cap X$ is infinite for any $X\in M\cap \idealI^+$.
We define a play of Player II as follows: Suppose that we are at the $k$ turn and Player I has played $n_k$ in this turn. If $n_k\in I$, Player II plays $1$. Otherwise, Player II plays $0$.

Consider the match of the game in the previous paragraph.
\begin{table}[H]
    		\centering
    		\begin{tabular}{l|llllll}
    			Player I  & $n_0$ &       & $n_1$ &       & $\dots$ &     \\  \hline
    			Player II &       & $i_0$ &       & $i_1$ &         & $\dots$
    		\end{tabular}
    	\end{table}
As $\sigma$ is a winning strategy, it must be the case that $\{ n_k\,:\,k\in\omega\}\in \idealI^+$. 
Furthermore, since $\{ n_k : i_k = 1\}\subseteq I \in \idealI$, we have $\{n_k\,:i_k=1\}$ is finite. This means that $\{ n_k\,: i_k=1\}$ belongs to $M$.
Also, since $\sigma$ and $\idealI$ are in $M$, we have $\{n_k\,:k\in\omega\}$ also belongs to $M$. 
Hence $I\cap \{ n_k\,:\,k\in\omega\}$ is infinite but this set is equal to $\{n_k\,:i_k=1\}$. This contradiction finishes the proof.
\end{proof}
\end{thm}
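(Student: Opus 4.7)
My plan for $(1) \Rightarrow (2)$ is to design, from a given sequence $\langle X_n : n \in \omega\rangle \subseteq \idealI^+$, a strategy for Player I whose Player II response is forced to produce the desired $I$. Fix a surjection $\phi \colon \omega \to \omega$ with each fibre $\phi^{-1}(\{n\})$ infinite, and let Player I maintain a counter $s_k$ starting at $0$ which increments exactly when Player II answers $1$; while the counter is at $s$, Player I draws the next element of $X_{\phi(s)}$ in increasing order past the previous play. By hypothesis (1), some match played this way is won by Player II. First I would rule out the case ``Player II plays $1$ only finitely often'': then the counter stabilises at some $s^*$, forcing $\{n_k : k \in \omega\} \supseteq^* X_{\phi(s^*)} \in \idealI^+$ while $\{n_k : i_k = 1\}$ is finite, which would make Player I win. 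So Player II plays $1$ infinitely often, the counter passes through every value of $\omega$, and the $1$-play at the end of each constant-counter block is an element of $X_{\phi(s)}$; since each fibre of $\phi$ is infinite, $I := \{n_k : i_k = 1\}$ meets every $X_m$ infinitely often. Either disjunct of the tallness* winning condition then places $I$ inside $\idealI$.

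For $(2) \Rightarrow (1)$, my plan is an elementary submodel argument. Assume for contradiction that $\sigma$ is a winning strategy for Player I and fix a countable $M \prec H(\lambda)$ with $\sigma, \idealI \in M$. Applying hypothesis (2) to a fixed enumeration of the countable set $M \cap \idealI^+$ yields $I \in \idealI$ with $|I \cap X| = \omega$ for every positive $X \in M$. Let Player II respond $1$ iff $n_k \in I$. Since $\sigma$ wins, $\{n_k\} \in \idealI^+$; and since $\{n_k : i_k = 1\} \subseteq I \in \idealI$, the winning condition forces $\{n_k : i_k = 1\}$ to be finite, for otherwise Player II would have won by the first disjunct. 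But every finite subset of $\omega$ lies in $M$, so Player II's entire response is coded by finite data in $M$, and together with $\sigma \in M$ this places the play $\{n_k\}$ itself in $M$. Then $\{n_k\}$ is a positive set in $M$, so $|I \cap \{n_k\}| = \omega$, contradicting that $I \cap \{n_k\} = \{n_k : i_k = 1\}$ is finite.

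The main obstacle I anticipate is the bookkeeping in $(1) \Rightarrow (2)$: carefully identifying, inside each constant-counter block, the index at which Player II switches to $1$ and checking that the attached play lies in the intended $X_{\phi(s)}$, then cleanly splitting into the two disjuncts of the tallness* winning condition to conclude $I \in \idealI$. The reverse direction is conceptually simpler once one recognises that every finite object is absorbed into a countable elementary submodel, so that the full Player II response and the entire play $\{n_k\}$ both land in $M$ and the property of $I$ can be applied directly.
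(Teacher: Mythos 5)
Your proof is correct and takes essentially the same route as the paper: the same counter-and-infinite-fibre strategy for Player I in $(1)\Rightarrow(2)$, and the same countable elementary submodel argument for $(2)\Rightarrow(1)$. You even make explicit a point the paper leaves implicit, namely that Player II's eventually-zero response is coded by finite data and hence lies in $M$, which is what places the play $\{n_k : k\in\omega\}$ inside $M$.
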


\begin{lem}\label{lemmatallness*}For an ideal $\idealI$ in $\omega$, the following are equivalent:
\begin{enumerate}
\item Player II has a winning strategy for the tallness* game with respect to $\idealI$.
\item There is an $\idealI^*$-branching tree $T\subseteq \omega^{<\omega}$ such that $f[\omega]\in \idealI\cap [\omega]^{\omega}$ for every $f\in [T]$.
\end{enumerate} 
\begin{proof} First we prove that $(1)$ implies $(2)$. For this, take a winning strategy for Player II, play $\sigma:\omega^{<\omega}\longrightarrow 2$. Given $s\in \omega^{<\omega}$, let $$M_s=\{t\in \omega^{<\omega}\,:\, s\subsetneq t,\,\sigma(t)=1\text{ and }\sigma(t \upharpoonright j)=0\text{ for all }|s|\leq j<|t|\}$$
$$H_s=\{t(|t|-1)\,:\,t\in M_s\}.$$

We claim $H_s\in \idealI^*$.
To see this,  increasingly enumerate $\omega\backslash H_s$ as $\langle x_j \rangle _{\in \omega}$.
Let $j_0$ be such that $x_{j_0}>s(|s|-1)$ and consider the function $f= s^\frown \langle x_j\rangle_{j\geq j_0}$.
Observe that $f[\omega]=^* \omega\backslash H_s$.
Furthermore, as $\sigma$ is a winning strategy for Player II, we have either that $f[\omega]\in \idealI$ or $\{f(n):\sigma(f\upharpoonright(n+1))=1\}\in \idealI\cap[\omega]^{\omega}$.
It is easy to see that the second case would imply that $(\omega\backslash H_s)\cap H_s\not=\varnothing$, which is impossible.
Hence $f[\omega]\in \idealI$ and consequently $H_s\in \idealI^*$.

For each $s\in \omega^{<\omega}$ and $n\in H_s$ let $t^n_s\in M_s$ be such that $t^n_s(|t^n_s|-1)=n$.
Finally, define $T\subseteq \omega^{<\omega}$ together with a function $\phi:T\longrightarrow \omega^{<\omega}$ as follows: $\varnothing \in T$ and $\phi(\varnothing)=\varnothing$.
Having defined some $x\in T$ and $\phi(x)$, let $\suc_T(x)=H_{\phi(x)}$ and for each $n\in \suc_T(x)$, define $\phi(x^\frown n)$ as $t^n_{\phi(x)}$. It is straightforward that such $T$ works.

Now we prove that $(2)$ implies $(1)$.
For this, let $T\subseteq \omega^{<\omega}$ be as in the hypotheses. We define a strategy for Player II as follows: Before the game starts, Player II defines $s_0=\varnothing \in T$.
Suppose that we are in the $k$-th turn and Player II has already defined some $s_k\in T$.
If Player I plays some $n_k\in \suc_T(s_k)$ then Player II answers with $1$ and defines $s_{k+1}$ as $s_k \append n$.
Otherwise, Player II answers with $0$ and puts $s_{k+1}=s_k$.

We claim that the previous strategy is a winning one.
Indeed, consider a match of the game in which Player II played according to such strategy and let $\langle s_k\rangle_{k\in\omega}$ be the sequence in $T$ constructed along the way.
\begin{table}[H]
    		\centering
    		\begin{tabular}{l|llllll}
    			Player I  & $n_0$ &       & $n_1$ &       & $\dots$ &     \\  \hline
    			Player II &       & $i_0$ &       & $i_1$ &         & $\dots$
    		\end{tabular}
    	\end{table}

If $\{n_k\,:\, k\in\omega\}\in \idealI$ we are done, so suppose that this set belongs to $\idealI^+$. As the tree $T$ is $\idealI^*$-branching, it is easy to see that $f=\bigcup\limits_{k\in\omega}s_{k}\in[T]$ Furthermore, by definition of the strategy $f[\omega]$ is equal $\{n_k\,:\,i_k=1\}$. As $f[\omega]\in \mathcal{I}$ and is infinite, we are done.

\end{proof}
\end{lem}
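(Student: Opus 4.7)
My plan is to translate between winning strategies for Player II and $\idealI^*$-branching trees, echoing the structure of Theorem \ref{theoremtallness} but adapted to the star-version's winning condition. A node in the tree will track the finite sequence of Player I moves at which Player II has responded $1$, and each node's successor set will need to lie in $\idealI^*$ (not merely be infinite), so that Player II can fall back on the clause ``$\{n_k\}\in\idealI$'' whenever Player I plays outside it.

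For $(2)\Rightarrow(1)$, given such a tree $T$, Player II maintains a node $s_k\in T$ (starting from $s_0=\varnothing$), playing $1$ and setting $s_{k+1}=s_k\append n_k$ whenever $n_k\in\suc_T(s_k)$, and otherwise playing $0$ and keeping $s_{k+1}=s_k$. If $s_k$ eventually stabilizes at some $s_\infty\in T$, then almost all of Player I's moves lie in $\omega\setminus\suc_T(s_\infty)\in\idealI$, so $\{n_k:k\in\omega\}\in\idealI$ and Player II wins via the second clause. If $s_k$ grows without bound, then $f:=\bigcup_k s_k\in[T]$ and the hypothesis gives $\{n_k:i_k=1\}=f[\omega]\in\idealI\cap[\omega]^\omega$, so Player II wins via the first clause.

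For $(1)\Rightarrow(2)$, fix a winning strategy $\sigma\colon\omega^{<\omega}\to 2$ and for each $s\in\omega^{<\omega}$ set
\[
M_s=\{t\supsetneq s:\sigma(t)=1\text{ and }\sigma(t\upharpoonright j)=0\text{ for }|s|\le j<|t|\},\qquad H_s=\{t(|t|-1):t\in M_s\}.
\]
The crux is to show $H_s\in\idealI^*$: otherwise $\omega\setminus H_s\in\idealI^+$, and extending $s$ by an increasing enumeration of $\omega\setminus H_s$ forces $\sigma$ to answer $0$ at every step past $|s|$ (a first $1$-response would place its triggering move in $H_s$). Thus the $1$-set in this play stays finite, and $\sigma$ can win only via $\{n_k:k\in\omega\}\in\idealI$; but the tail of $\{n_k:k\in\omega\}$ is $\omega\setminus H_s\in\idealI^+$, a contradiction.

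Once $H_s\in\idealI^*$ is established, I build $T$ together with an embedding $\phi\colon T\to\omega^{<\omega}$ by setting $\phi(\varnothing)=\varnothing$, $\suc_T(x)=H_{\phi(x)}$, and for each $n\in\suc_T(x)$ choosing $\phi(x\append n)\in M_{\phi(x)}$ with last coordinate $n$. Any $f\in[T]$ then lifts via $\phi$ to a branch $g$ of the strategy-tree, i.e.\ a full play of Player I against $\sigma$ whose $1$-response set is exactly $f[\omega]$; invoking either clause of $\sigma$'s winning condition yields $f[\omega]\in\idealI$, and infinity comes free because every element of $H_s$ exceeds $s(|s|-1)$. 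The main obstacle I anticipate is the $H_s\in\idealI^*$ step, since this is the one place where the two-case structure of the tallness* winning condition has to be deployed carefully and simultaneously.
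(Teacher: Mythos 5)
Your proposal is correct and follows essentially the same route as the paper's proof: the same sets $M_s$ and $H_s$ with the same argument (via the two-case winning condition) that $H_s\in\idealI^*$, the same tree built through the map $\phi$, and the same node-tracking strategy with the same dichotomy (stabilizing node gives $\{n_k:k\in\omega\}\in\idealI$; otherwise a branch of $T$ gives an infinite $1$-set in $\idealI$) for the converse. No substantive differences to report.
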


\begin{lem}\label{IIstrategyed}Let $\idealI$ be an ideal over $\omega$. If $\idealI\geq_K \mathcal{ED}$ then Player II has a winning strategy for the tallness* game with respect to $\idealI$.
\begin{proof}
Let $\phi\colon\omega\to \omega\times \omega$ be such that $\phi^{-1}[I]\in \idealI$ for any $I\in \mathcal{ED}$.
The following is an $\mathcal{ED}^*$-branching tree: $$T=\{\langle (n_0,m_0),\dots, (n_{k-1},m_{k-1})\rangle \in (\omega\times \omega)^{<\omega} \, :\, k\in\omega\text{ and }n_i<n_j\text{ for all }i<j<k\, \}$$
This is because the set of successors of any element of $T$ is of the form $(\omega\backslash n)\times \omega $ for some $n\in\omega$.
Note that for any $f\in [T]$ it holds that $f[\omega]$ is the graph of a partial function which means that $f[\omega]\in \mathcal{ED}$.
To finish, just define a tree $T_{\phi}\subseteq \omega^{\omega}$ as follows:
$$\langle n_0,\dots,n_{k-1}\rangle \in T\text{ if and only if }\langle \phi(n_0),\dots, \phi(n_{k-1})\rangle\in T.$$

It should be clear that $T_{\phi}$ is an $\idealI^*$ branching tree because $\phi^{-1}[U]\in \idealI^*$ for any $U\in \mathcal{ED}^*$.
Furthermore,  $\idealI$ for any $f\in [T_\phi]$ we have that $\phi\circ f\in [T]$.
This means that $(\phi\circ f)[ \omega]\in \mathcal{ED} $.
Since  $f[\omega]\subseteq \phi^{-1}[\phi\circ f[\omega]]$ it follows that $f[\omega]\in \idealI$.
By Lemma \ref{lemmatallness*}, the proof is over.

\end{proof}
\end{lem}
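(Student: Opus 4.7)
The plan is to invoke Lemma \ref{lemmatallness*}, which reduces producing a winning strategy for Player II in the tallness* game with respect to $\idealI$ to the construction of an $\idealI^*$-branching tree $T \subseteq \omega^{<\omega}$ such that $f[\omega] \in \idealI \cap [\omega]^\omega$ for every $f \in [T]$. So my objective is to manufacture such a $T$ out of a Katětov reduction $\phi \colon \omega \to \omega \times \omega$ witnessing $\idealI \geq_{\mathrm{K}} \mathcal{ED}$.

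First I would build a natural template tree $S \subseteq (\omega \times \omega)^{<\omega}$ that is $\mathcal{ED}^*$-branching and whose branches have range in $\mathcal{ED}$. A clean choice is $S = \{ \seq{(n_0, m_0), \dots, (n_{k-1}, m_{k-1})} : n_0 < n_1 < \dots < n_{k-1} \}$. The set of successors of any node in $S$ is of the form $(\omega \setminus N) \times \omega$ for some finite $N \subseteq \omega$, whose complement $N \times \omega$ is a finite union of vertical lines and hence lies in $\mathcal{ED}$; thus $S$ is $\mathcal{ED}^*$-branching. Any branch $f \in [S]$ enumerates the graph of an injective partial function from $\omega$ to $\omega$, so $f[\omega] \in \mathcal{ED}$ and is infinite.

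Next I would pull $S$ back along $\phi$ by defining $T \subseteq \omega^{<\omega}$ via $\seq{n_0, \dots, n_{k-1}} \in T$ iff $\seq{\phi(n_0), \dots, \phi(n_{k-1})} \in S$. The successor set of a node $s \in T$ is exactly $\phi^{-1}[\suc_S(\phi \circ s)]$, which is in $\idealI^*$ because $\suc_S(\phi \circ s) \in \mathcal{ED}^*$ and $\phi$ is a Katětov reduction (preimages of $\mathcal{ED}$-sets lie in $\idealI$, equivalently preimages of $\mathcal{ED}^*$-sets lie in $\idealI^*$). For any $f \in [T]$, the composition $\phi \circ f$ is a branch of $S$, so $(\phi \circ f)[\omega] \in \mathcal{ED}$; since $f[\omega] \subseteq \phi^{-1}[(\phi \circ f)[\omega]]$, the Katětov property gives $f[\omega] \in \idealI$, while the injectivity of $\phi \circ f$ forces $f$ itself to be injective, so $f[\omega]$ is infinite. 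This is exactly the input Lemma \ref{lemmatallness*} demands, and the conclusion follows.

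I do not anticipate a genuine obstacle; the only subtle ingredient is choosing the template tree $S$ so that the branching condition (successor sets in $\mathcal{ED}^*$) and the branch condition (ranges in $\mathcal{ED}$) hold simultaneously, which is why I insist on strictly increasing first coordinates rather than the weaker "distinct pairs" condition. Once $S$ is in hand, the pullback under $\phi$ and the appeal to Lemma \ref{lemmatallness*} are automatic.
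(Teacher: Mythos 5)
Your proposal is correct and follows essentially the same route as the paper: build the $\mathcal{ED}^*$-branching template tree of sequences with strictly increasing first coordinates, pull it back along the Kat\v{e}tov reduction $\phi$ to get an $\idealI^*$-branching tree whose branches have range in $\idealI$, and apply Lemma \ref{lemmatallness*}. Your explicit remark that injectivity of $\phi\circ f$ guarantees $f[\omega]$ is infinite is a small detail the paper leaves implicit, but the argument is the same.
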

 
As a corollary of Lemmas \ref{lemmatallness*}, \ref{IIstrategyed} in this article, and the discussion after Claim 3.3 in \cite{katetovorderhrusak} we have the following theorem.
\begin{thm}Let $\idealI$ be an ideal over $\omega$. The following are equivalent:
\begin{enumerate}
\item There is $X\in \idealI^+$ for which Player II has a winning strategy in the tallness* game with respect to $\idealI|_X$.
\item There is an $X\in \idealI^+$ for which $\idealI|_X\geq_K\mathcal{ED}$.
\end{enumerate}
\end{thm}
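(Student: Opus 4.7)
The plan is to prove the two implications separately, leaning on Lemma \ref{lemmatallness*} and Lemma \ref{IIstrategyed} together with the material recalled from \cite{katetovorderhrusak}. Throughout, for $X \in \idealI^+$ I implicitly identify $X$ with $\omega$ via a bijection so that the previous lemmas, which are stated for ideals on $\omega$, can be applied directly to $\idealI|_X$.

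For $(2) \Rightarrow (1)$, suppose $X \in \idealI^+$ satisfies $\idealI|_X \geq_K \mathcal{ED}$. Applying Lemma \ref{IIstrategyed} to the ideal $\idealI|_X$ (viewed as an ideal on $\omega$) yields a winning strategy for Player II in the tallness* game with respect to $\idealI|_X$, which is exactly what (1) asserts.

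For $(1) \Rightarrow (2)$, fix $X \in \idealI^+$ witnessing (1) and apply Lemma \ref{lemmatallness*} to $\idealI|_X$ to obtain an $(\idealI|_X)^*$-branching tree $T \subseteq \omega^{<\omega}$ such that $f[\omega] \in \idealI|_X \cap [\omega]^\omega$ for every $f \in [T]$. Now I would invoke the argument in the discussion after Claim 3.3 of \cite{katetovorderhrusak}: from the existence of such a tree $T$ one extracts a positive set $Y \in (\idealI|_X)^+$ together with a function witnessing $\idealI|_Y \geq_K \mathcal{ED}$. Since $Y$ is $(\idealI|_X)$-positive, it is automatically $\idealI$-positive and $\idealI|_Y = (\idealI|_X)|_Y$, so $Y$ is the desired witness.

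The main obstacle is not formal but locating and reproducing the correct extraction step from \cite{katetovorderhrusak}: one has to see how the $\idealI^*$-branching structure of $T$ combined with the fact that each branch's range belongs to the ideal lets one carve out a positive set $Y$ and organize it into ``columns'' and ``graphs'' that mimic the structure of $\mathcal{ED}$, so that a natural map $Y \to \omega \times \omega$ becomes the required Katětov reduction. Once this step is granted, the rest is just bookkeeping between $\idealI$ and its restrictions, and the corollary follows by chaining the two lemmas with this reduction.
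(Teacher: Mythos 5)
Your proposal is correct and follows essentially the same route as the paper, which itself gives no written proof but derives the theorem exactly as you do: Lemma \ref{IIstrategyed} (applied to $\idealI|_X$) for $(2)\Rightarrow(1)$, and Lemma \ref{lemmatallness*} combined with the discussion after Claim 3.3 of \cite{katetovorderhrusak} to turn the $(\idealI|_X)^*$-branching tree into a Kat\v{e}tov reduction above a positive set for $(1)\Rightarrow(2)$. The one step you flag as unverified --- extracting the positive set $Y$ and the reduction from the tree --- is precisely the step the paper also outsources to \cite{katetovorderhrusak}, so no gap relative to the paper's own argument.
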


\section{Two versions for games on ideals}\label{sec:twoversions}

In this section, we generalize games in Section \ref{sec:tallness} by means of Definition \ref{defBgame}. At the same time, in Definition \ref{defGgame} we introduce the game $\mathfrak{G}(\idealI,\idealJ)$. Particular instances of this game correspond to the filter games introduced by C. Laflamme in \cite{Laflamme}. As communicated by M. Hru\v{s}\'ak, the Game $\mathfrak{G}(\idealI,\idealJ)$ was used by him and A. Shibakov in \cite{hrusakzindulka} in order to prove what they call \textit{the Definable ideal dichotomy}. The main results of this section are Theorems \ref{winningIBG} and \ref{winningIIBG}. In there, we show that winning strategies for one player in the game $\mathfrak{G}(\idealI,\idealJ)$ correspond to winning strategies for the other player in the game $\mathfrak{B}(\idealI,\idealJ)$. As a corollary of both theorems, we also prove the Definable ideal dichotomy.

 \begin{defi}\label{defGgame}Let $\idealI$ and $\idealJ$ be two ideals over $\omega$. We define the game $\mathfrak{G}(\idealI,\idealJ)$ as follows:
\begin{table}[H]
		\centering
		\begin{tabular}{l|llllll}
			Player I  & $I_0\in \idealI$ &       & $ I_1\in\idealI$ &       & $\dots$ &     \\  \hline
			Player II &       & $n_0\not\in I_0$ &       & $n_0<n_1\not\in I_1$ &         & $\dots$
		\end{tabular}
	\end{table}

  Here, Player II wins whenever $\langle n_i\rangle_{i\in\omega}\in \idealJ^+.$ 
 \end{defi}
Note that $\mathfrak{G}(\text{Fin},\idealJ)$ is just the HMM game for the ideal $\idealJ$ and that $\mathfrak{G}(\idealI,\idealI)$ is just one instance of the games defined by Laflamme in \cite{Laflamme}.
 \begin{defi}\label{defBgame}
 Let $\idealI$ and $\idealJ$ be two ideals over $\omega$. We define the game $\mathfrak{B}(\idealI,\idealJ)$ as follows:
\begin{table}[H]
		\centering
		\begin{tabular}{l|llllll}
			Player I  & $n_0\in\omega $ &       & $n_0<n_1\in\omega$ &       & $\dots$ &     \\  \hline
			Player II &       & $i_0\in 2$ &       & $i_1\in 2$ &         & $\dots$
		\end{tabular}
	\end{table}
Here, Player II wins whenever $\langle n_k\rangle_{k\in\omega}\in \idealI$ or $\{ n_k\,:\,i_k=1\}\in \idealJ\cap [\omega]^\omega$.
 \end{defi}
Note that $\mathfrak{B}(\mathsf{Fin},\idealJ)$ is the tallness game with respect to $\idealJ$ and $\mathfrak{B}(\idealI,\idealI)$ is the tallness* game with respect to $\idealI$
The following theorem is proved in the same way as  Lemma \ref{lemmatallness*}. 

\begin{thm}\label{winningIBG}Let $\idealI$ and $\idealJ$ be two ideals over $\omega$. The following are equivalent:
\begin{enumerate}
\item Player I has a winning strategy in $\mathfrak{G}(\idealI,\idealJ)$.
\item Player II has a winning strategy in $\mathfrak{B}(\idealI,\idealJ).$

\item There is an $\idealI^*$-branching tree $T\subseteq \omega^{<\omega}$ such that $f[\omega]\in \idealJ\cap [\omega]^{\omega}$ for every $f\in [T]$.
\end{enumerate}
\begin{proof}
First we prove that (1) implies (2). For this, let $\sigma: \omega^{<\omega}\longrightarrow \idealI$ be a winning strategy for Player I in $\mathfrak{G}(\idealI,\idealJ)$. We define a strategy for Player II in $\mathfrak{B}(\idealI,\idealJ)$ together with a sequence $\langle t_k\rangle_{k\in\omega}$ of elements 
of $\omega^{<\omega}$ as follows:  At the first turn Player I plays some $n_0\in\omega$. If $n_0\not\in \sigma(\varnothing)$ then Player II defines $t_0$ as $\langle n_0 \rangle$ and plays $i_0=1$. Otherwise, Player II defines $t_0$ as the empty sequence and plays $i_0=0$. In 
general, if we are on the $k+1$ turn and Player I plays some $n_{k+1}\in\omega$ then Player II defines $t_{k+1}$ as $t_k^\frown \langle n_{k+1}\rangle$ if $n_{k+1}\not\in \sigma(t_k)$ and plays $i_{k+1}=1$. Otherwise, Player II defines $t_{k+1}$ as $t_k$ and plays $i_{k+1}=0$. 

The previously defined strategy is a winning one. For if $\langle t_k\rangle_{k\in\omega}$ is an eventually constant sequence, play with value $t$, then $\{ n_k\,:\,k\in\omega\}\subseteq^*\sigma(t)\in \idealI$. Otherwise, $$f=\bigcup\limits_{k\in\omega}t_k\in \omega^\omega$$ represents a match of the game $\mathfrak{G}(\idealI,\idealJ)$ in which Player I played according to $\sigma$. As this is a winning strategy for Player I, it should happen that $\{ n_k\,:\,i_k=1\}=f[\omega]\in \mathcal{J}\cap[\omega]^\omega$.

Now we prove that (2) implies (3). For this, take a winning strategy $\sigma:\omega^{<\omega}\longrightarrow 2$ for Player II in $\mathfrak{B}(\idealI,\idealJ)$. Given $s\in \omega^{<\omega}$, let $$M_s=\{t\in \omega^{<\omega}\,:\, s\subsetneq t,\,\sigma(t) = 1\text{ and }\sigma(t \upharpoonright j) = 0 \text{ for all }|s|\leq j<|t|\}$$
$$H_s=\{t(|t|-1)\,:\,t\in M_s\}.$$
We claim $H_s\in \idealI^*$. To see this,  increasingly enumerate $\omega\backslash H_s$ as $\langle x_j \rangle _{\in \omega}$. Let $j_0$ be such that $x_{j_0}>s(|s|-1)$ and consider the function $f= s^\frown \langle x_j\rangle_{j\geq j_0}$. Observe that $f[\omega]=^* \omega\backslash H_s$. Furthermore, as $\sigma$ is a winning strategy for Player II, we have either that $f[\omega]\in \idealI$ or $\{f(n)\,:\sigma(f \upharpoonright (n+1))=1\}\in \idealJ\cap[\omega]^{\omega}$. It is easy to see that the second case would imply that $(\omega\backslash H_s)\cap H_s\not=\varnothing$, which is impossible. Hence $f[\omega]\in \idealI$ and consequently $H_s\in \idealI^*$.

For each $s\in \omega^{<\omega}$ and $n\in H_s$ let $t^n_s\in  M_s$ be such that $t^n_s(|t^n_s|-1)=n$. Finally, define $T\subseteq \omega^{<\omega}$ together with a function $\phi:T\longrightarrow \omega^{<\omega}$ as follows: $\varnothing \in T$ and $\phi(\varnothing)=\varnothing$. Having defined some $x\in T$ and $\phi(x)$, let $\suc_T(x)=H_{\phi(x)}$ and for each $n\in \suc_T(x)$, define $\phi(x^\frown n)$ as $t^n_{\phi(x)}$. It is straightforward that such $T$ works.

Finally, we prove that (3) implies (1). For this, let $T$ be a tree as in the hypotheses. We define a strategy for Player I in $\mathfrak{G}(\mathcal{I},\mathcal{J})$ as follows: On the first turn Player I plays $I_0= \omega\backslash \suc_T(\varnothing)$. Now suppose that we are on the $k+1$ turn and that both players have played according to the following table:

\begin{table}[H]
		\centering
		\begin{tabular}{l|llllll}
			Player I  & $I_0\in \idealI$ &       & $\dots$ &       & $I_k\in \idealI$ &     \\  \hline
			Player II &       & $n_0\not\in I_0$ &       & $\dots$ &         & $n_k\notin I_k$
		\end{tabular}
	\end{table}

Furthermore, suppose that $\langle n_0,\dots n_k\rangle\in T$. Then Player I plays \linebreak
$I_{k+1}=\omega\setminus \suc_T(\langle n_0,\dots,n_k\rangle)$. Observe that Player II is forced to play some $n_{k+1}\in \suc_T(\langle n_0,\dots,n_k\rangle)$. In other words, $\langle n_0,\dots,n_k,n_{k+1}\rangle\in T$. This shows that Player I can play the next turn following the strategy defined above.

By definition, for any match played according to the previously defined strategy, it holds that $\langle n_k\rangle_{k\in\omega}\in [T]$. That is, $\{n_k\,:\,k\in\omega\}\in \mathcal{J}$. This finishes the proof.
\end{proof}
\end{thm}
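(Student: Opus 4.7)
The plan is to prove (1) $\Rightarrow$ (2) $\Rightarrow$ (3) $\Rightarrow$ (1), closely mirroring the three-way equivalence in Lemma \ref{lemmatallness*}. For (1) $\Rightarrow$ (2), given a winning strategy $\sigma$ of Player I in $\mathfrak{G}(\idealI,\idealJ)$, I would have Player II in $\mathfrak{B}(\idealI,\idealJ)$ maintain an auxiliary sequence $t_k \in \omega^{<\omega}$ of ``accepted'' moves of Player I. On Player I's move $n_k$, Player II plays $i_k = 1$ and sets $t_{k+1} = t_k \append \langle n_k \rangle$ if $n_k \notin \sigma(t_k)$, and otherwise plays $i_k = 0$ with $t_{k+1} = t_k$. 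If $\langle t_k \rangle$ eventually stabilizes at some $t$, almost every $n_k$ lies in $\sigma(t) \in \idealI$, so $\{n_k : k \in \omega\} \in \idealI$ and the first disjunct of Player II's winning condition holds. Otherwise $\bigcup_k t_k \in \omega^\omega$ is a legal infinite play in $\mathfrak{G}(\idealI,\idealJ)$ against $\sigma$, whose range $\{n_k : i_k = 1\}$ is then in $\idealJ \cap [\omega]^\omega$, giving the second disjunct.

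For (2) $\Rightarrow$ (3), given a winning strategy $\sigma : \omega^{<\omega} \to 2$ of Player II, I would set up the sets $M_s$ and $H_s$ from Lemma \ref{lemmatallness*}. The same argument as there shows $H_s \in \idealI^*$: were $\omega \setminus H_s$ infinite, enumerating it as $s \append \langle x_j \rangle_{j \ge j_0}$ would force $\sigma$ to answer $0$ past $|s|$ on every prefix of the play, eliminating the second winning disjunct and thereby forcing $\omega \setminus H_s =^* f[\omega] \in \idealI$ via the first. Then build $T$ and $\phi : T \to \omega^{<\omega}$ recursively by $\phi(\varnothing) = \varnothing$, $\suc_T(x) = H_{\phi(x)}$, and $\phi(x \append \langle n \rangle) = t^n_{\phi(x)}$ with $t^n_{\phi(x)} \in M_{\phi(x)}$ chosen to end in $n$. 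For a branch $f \in [T]$, $\phi(f) = \bigcup_k \phi(f \upharpoonright k)$ is a legal infinite play along $\sigma$ in which Player II answers $1$ exactly at the coordinates whose values enumerate $f[\omega]$; since $T$ is $\idealI^*$-branching, $f[\omega]$ is infinite, and Player II's winning condition on $\phi(f)$ yields $f[\omega] \in \idealJ \cap [\omega]^\omega$.

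For (3) $\Rightarrow$ (1), given an $\idealI^*$-branching tree $T$ as in (3), Player I's strategy in $\mathfrak{G}(\idealI,\idealJ)$ maintains inductively that the partial play $\langle n_0, \dots, n_{k-1} \rangle$ lies in $T$ and plays $I_k = \omega \setminus \suc_T(\langle n_0, \dots, n_{k-1} \rangle)$, which is in $\idealI$ by the $\idealI^*$-branching hypothesis. This forces Player II to play $n_k \in \suc_T(\langle n_0, \dots, n_{k-1}\rangle)$, so the induction passes and $\langle n_k : k \in \omega \rangle \in [T]$, whence $\{n_k : k \in \omega\} \in \idealJ$ and Player II loses.

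The point calling for extra care relative to Lemma \ref{lemmatallness*} is in (2) $\Rightarrow$ (3): Player II's winning condition in $\mathfrak{B}(\idealI,\idealJ)$ splits into an $\idealI$-disjunct ($\phi(f)[\omega] \in \idealI$) and a $\idealJ$-disjunct ($f[\omega] \in \idealJ \cap [\omega]^\omega$), and only the latter directly furnishes the conclusion needed for~(3). In Lemma \ref{lemmatallness*} the two disjuncts coincide, so no separation is needed, but here one must ensure that on the play $\phi(f)$ built from a branch $f \in [T]$ it is the $\idealJ$-disjunct that is activated, using the infinitude of $f[\omega]$ provided by the $\idealI^*$-branching of~$T$.
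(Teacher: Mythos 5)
Your proposal follows the paper's proof step for step: the same auxiliary sequence $\langle t_k\rangle$ of accepted moves for (1)$\Rightarrow$(2), the same sets $M_s$, $H_s$ and the tree $(T,\phi)$ for (2)$\Rightarrow$(3), and the same ``play the complement of the successor set'' strategy for (3)$\Rightarrow$(1). The first and third implications are fine. You are also right to single out (2)$\Rightarrow$(3) as the delicate point, but your proposed resolution does not close it. For a branch $f\in[T]$ the associated play is $g=\bigcup_k\phi(f\upharpoonright k)$, and what the winning condition gives is the disjunction ``$g[\omega]\in\idealI$ or $f[\omega]\in\idealJ\cap[\omega]^\omega$''. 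The infinitude of $f[\omega]$ only rules out the selected set being finite; it does nothing to exclude the first disjunct, which concerns the \emph{whole} play $g$ (including the $0$-answered intermediate values), and $g[\omega]\in\idealI$ yields at best $f[\omega]\in\idealI$, not $f[\omega]\in\idealJ$. Concretely, split $\omega=X\sqcup Y$ into two infinite pieces, let $\idealI$ be generated by $X$ together with the finite sets and $\idealJ$ by $Y$ together with the finite sets, and let $\sigma$ answer $1$ at $n_k$ iff $n_k\in Y$ or no element of $Y$ has appeared so far. This $\sigma$ is a winning strategy for Player II in $\mathfrak{B}(\idealI,\idealJ)$, but $H_s=(\max s,\omega)$ for every $s\in X^{<\omega}$, so the constructed $T$ has a branch $f$ with $f[\omega]$ an infinite subset of $X$; then $f[\omega]\in\idealI\setminus\idealJ$, and this $T$ is not a witness for (3) (even though a different tree is).

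To be fair, this gap is inherited from the paper itself, whose proof of this step ends with ``it is straightforward that such $T$ works.'' The argument as written does go through whenever $\idealI\subseteq\idealJ$, since then $g[\omega]\in\idealI$ gives $f[\omega]\subseteq g[\omega]$, hence $f[\omega]\in\idealJ\cap[\omega]^\omega$ anyway; this covers the two instances $\mathfrak{B}(\mathrm{Fin},\idealJ)$ and $\mathfrak{B}(\idealI,\idealI)$ of Theorem \ref{theoremtallness} and Lemma \ref{lemmatallness*}, which is presumably why the issue went unnoticed when the lemma was generalized. For arbitrary $\idealI,\idealJ$ one must either prune $T$ or choose the witnesses $t^n_s$ (and the sets $H_s$) more carefully so that the plays attached to branches are forced to be $\idealI$-positive; neither your proposal nor the paper does this, so the step needs a genuinely new idea rather than the appeal to the infinitude of $f[\omega]$.
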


The proof of the next theorem is analogous to one of Theorem \ref{theoremtallness}. We already showed how to translate the proof of Lemma \ref{lemmatallness*} in its full generality in Theorem \ref{winningIBG} and the translation of Theorem \ref{theoremtallness} to Theorem \ref{winningIIBG} is carried in a similar manner.

\begin{thm}\label{winningIIBG}If $\idealI$ and $\idealJ$ are two Borel ideals over $\omega$, the following are equivalent:
\begin{enumerate}
\item Player II has a winning strategy in $\mathfrak{G}(\idealI,\idealJ)$.
\item Player I has a winning strategy in $\mathfrak{B}(\idealI,\idealJ).$
\item There is an $\idealI^+$-branching tree $T \subseteq \omega^{<\omega}$ such that $f[\omega] \in \mathcal{J}^+$ for every $f \in [T]$.
\end{enumerate}

\end{thm}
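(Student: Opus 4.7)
The plan is to follow the template of Theorem~\ref{theoremtallness}, replacing the roles of $\text{Fin}$ by $\idealI$ and of $\idealI$ by $\idealJ$ throughout, exactly the same generalization that carried Lemma~\ref{lemmatallness*} into Theorem~\ref{winningIBG}. I will prove the cycle (1)$\Rightarrow$(3)$\Rightarrow$(2)$\Rightarrow$(1), and the Borel hypothesis will intervene only at one explicit step.

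First I would handle (1)$\Rightarrow$(3) in the manner of Theorem~\ref{winningIBG}(1)$\Rightarrow$(2). Starting from a winning strategy $\tau$ for Player~II in $\mathfrak{G}(\idealI,\idealJ)$, I would recursively define $T$ together with an assignment $\phi\colon T\to$ (partial histories of $\mathfrak{G}$), with $\phi(\varnothing)=\varnothing$, $\suc_T(s)=\{\tau(\phi(s)\append\langle I\rangle):I\in\idealI\}$, and $\phi(s\append n)$ extending $\phi(s)$ by the pair $(I,n)$ witnessing $n\in\suc_T(s)$. The $\idealI^+$-branching follows from a diagonal argument: if $\suc_T(s)$ were in $\idealI$, Player~I could legally play $I=\suc_T(s)$, forcing $\tau(\phi(s)\append\langle I\rangle)\in\suc_T(s)\subseteq I$, contradicting the game rule. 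Since each branch $f\in[T]$ codes a legal $\mathfrak{G}$-play in which Player~II follows $\tau$, the winning condition yields $f[\omega]\in\idealJ^+$.

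Next I would handle (2)$\Rightarrow$(1) via the simulation argument analogous to Theorem~\ref{theoremtallness}(2)$\Rightarrow$(1). Given a winning strategy $\sigma$ for Player~I in $\mathfrak{B}(\idealI,\idealJ)$, Player~II of $\mathfrak{G}$ simulates $\mathfrak{B}$ internally: after Player~I in $\mathfrak{G}$ plays $I_k\in\idealI$, the simulated Player~II of $\mathfrak{B}$ responds $0$ while $\sigma$'s next output lies in $I_k\cup[0,n_{k-1}]$, and responds $1$ at the first output $m\notin I_k$ with $m>n_{k-1}$; this $m$ is returned as $n_k$ in $\mathfrak{G}$. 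Termination is guaranteed because $\{\sigma\text{-outputs}\}\in\idealI^+$ by Player~I's winning condition in $\mathfrak{B}$, so its intersection with $\omega\setminus I_k$ is infinite. The resulting $\{n_k\}$ coincides with the simulated set of $1$-marks, is infinite, and therefore lies in $\idealJ^+$ by the winning condition for $\sigma$, which is exactly Player~II's winning condition in $\mathfrak{G}$.

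Finally, (3)$\Rightarrow$(2) will be the main obstacle. Player~I would try the branch-searching strategy from Theorem~\ref{theoremtallness}(3)$\Rightarrow$(2): maintain an active node $s$, play the least unused element of $\suc_T(s)$ greater than the previous move, and update $s\mapsto s\append n_k$ after each $i_k=1$. The easy case is when Player~II plays $1$ only finitely often: Player~I then eventually enumerates a cofinite part of some $\suc_T(s)\in\idealI^+$, so $\{n_k\}\in\idealI^+$ with a finite marked set. In the remaining case Player~II plays $1$ infinitely often, the marked plays trace a branch $f\in[T]$, and $\{n_k:i_k=1\}=f[\omega]\in\idealJ^+$; here I must also ensure $\{n_k\}\in\idealI^+$, and this is where the Borelness of $\idealI$ and $\idealJ$ enters. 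Using that $\{f\in[T]:f[\omega]\in\idealI\}$ is Borel in $[T]$, I would carry out a fusion argument refining $T$ to a subtree $T'$ still $\idealI^+$-branching with $\idealJ^+$-branches and additionally with $f[\omega]\in\idealI^+$ for every $f\in[T']$; running the strategy on $T'$ then yields $\{n_k\}\supseteq f[\omega]\in\idealI^+$ also in the infinite-$1$ case. Without Borelness the naive strategy can be defeated on trees whose branches all live in $\idealI$, so it is precisely the descriptive-set-theoretic pruning that rules this out and completes the cycle.
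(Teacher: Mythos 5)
Your implications (1)$\Rightarrow$(3) and (2)$\Rightarrow$(1) are correct, and they are the natural translations of the corresponding steps of Theorem \ref{theoremtallness}. The gap is in (3)$\Rightarrow$(2): the refinement lemma you rely on --- that an $\idealI^+$-branching tree with all branches in $\idealJ^+$ can be pruned to one that is still $\idealI^+$-branching and whose branches are moreover in $\idealI^+$ --- is false. Take $\idealI=\mathcal{Z}$, the Borel ideal of density-zero sets, and $\idealJ=\mathrm{Fin}$. Condition (3) holds trivially: the tree of all increasing finite sequences is $\mathcal{Z}^+$-branching and every branch has infinite range. But any $\mathcal{Z}^+$-branching tree is in particular infinite-branching, so by the implication (3)$\rightarrow$(4) of Theorem \ref{theoremtallness} a tree all of whose branches lie in $\mathcal{Z}^+$ would give $\non^*(\mathcal{Z})=\aleph_0$, which fails: given countably many infinite sets one can diagonalize a single density-zero set meeting each of them infinitely. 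So no fusion argument can produce your $T'$, even though the theorem holds in this example (Player I simply plays $0,1,2,\dots$). This also shows that Player I's winning strategy cannot in general be extracted from a single tree whose branches are $\idealI^+$.

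The Borel hypothesis is not there to enable a pruning of $T$; it is there to make $\mathfrak{B}(\idealI,\idealJ)$ determined (it is a game on $\omega$ whose payoff set is Borel when $\idealI$ and $\idealJ$ are Borel). The correct third leg is: assume (3) and suppose Player I had no winning strategy in $\mathfrak{B}(\idealI,\idealJ)$; by determinacy Player II would have one, and Theorem \ref{winningIBG} would then yield an $\idealI^*$-branching tree $S$ with $f[\omega]\in\idealJ\cap[\omega]^\omega$ for every $f\in[S]$. This is incompatible with the tree $T$ of (3): at every node $u$ lying in both trees, $\suc_T(u)\in\idealI^+$ and $\suc_S(u)\in\idealI^*$ have infinite intersection, so one can build a common branch $f\in[T]\cap[S]$, whose range would then be simultaneously in $\idealJ^+$ and in $\idealJ$. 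Hence Player I has a winning strategy, proving (3)$\Rightarrow$(2). Your (1)$\Rightarrow$(3) and (2)$\Rightarrow$(1) can be kept; replace the refinement step by this determinacy argument.
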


 As a corollary, we have the following:
 \begin{thm}[Definable Ideal Dichotomy]Given a filter $\mathcal{F}$ and a Borel ideal $\mathcal{I}$, the following are equivalent:
 \begin{enumerate}
    \item There is an $\mathcal{F}^+$-branching tree $T \subseteq \omega^{<\omega}$ such that $f[\omega] \in \mathcal{I}^+$ for every $f \in [T]$.
    
\item There is an $\mathcal{F}^*$-branching tree $T\subseteq \omega^{<\omega}$ such that $f[\omega]\in \idealI\cap [\omega]^{\omega}$ for every $f\in [T]$.

 \end{enumerate}
 \begin{proof}Even though the filter $\mathcal{F}$ may not be definable the game $\mathfrak{G}(\mathcal{F}^*,\mathcal{I})$ is a determined game. To see this, note that the tree $G$ associated to the game can be viewed as the subtree of $A^{<\omega}$ where $A=\mathcal{F}\cup \omega$ and $\sigma\in G $ if and only if for any $n\in \dom(\sigma)$ it happens that $\sigma(n)\in \mathcal{F}$ whenever $n$ is even and $\sigma(n-2)<\sigma(n)\in\omega\backslash \sigma(n-1)$  whenever $n$ is odd. Consider the  function $\pi:[G]\longrightarrow [\omega]^{\omega}$ given by $\pi(f)=f[\{2n+1\,:\,n\in \omega\}]$. It is direct that $\pi$ is continuous and that the payoff set for Player $I$ is equal to $\pi^{-1}[\mathcal{I}^+]$. Since $\mathcal{I}$ is Borel,  we are done.
 \end{proof}
\end{thm}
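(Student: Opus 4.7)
My plan is to reduce the dichotomy to Borel determinacy of the game $\mathfrak{G}(\mathcal{F}^*, \idealI)$. I would first apply Theorem \ref{winningIBG} to the pair of ideals $(\mathcal{F}^*, \idealI)$: since $(\mathcal{F}^*)^* = \mathcal{F}$, its condition~(3) reads ``there exists an $\mathcal{F}$-branching tree $T \subseteq \omega^{<\omega}$ with $f[\omega] \in \idealI \cap [\omega]^\omega$ for every $f \in [T]$,'' which is condition~(2) of the dichotomy. Hence (2) is equivalent to Player~I having a winning strategy in $\mathfrak{G}(\mathcal{F}^*, \idealI)$. In parallel, applying Theorem \ref{winningIIBG} to the same pair and using $(\mathcal{F}^*)^+ = \mathcal{F}^+$ shows that (1) of the dichotomy is equivalent to Player~II having a winning strategy in $\mathfrak{G}(\mathcal{F}^*, \idealI)$; here one should check that the direction ``Player~II WS $\Rightarrow$ tree'' of Theorem \ref{winningIIBG}, which parallels the proof of Theorem \ref{theoremtallness}, needs Borelness only of $\idealI$ and not of $\mathcal{F}^*$.

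The central step is then to prove that $\mathfrak{G}(\mathcal{F}^*, \idealI)$ is determined, despite $\mathcal{F}$ possibly being undefinable. I would view the game tree as a subtree $G$ of $A^{<\omega}$ with $A = \mathcal{F} \cup \omega$ carrying the Polish topology obtained by combining the subspace topology on $\mathcal{F} \subseteq 2^\omega$ with the discrete topology on $\omega$. The rules of the game cut $[G]$ out as a closed subset of $A^\omega$. Let $\pi \colon [G] \to \omega^\omega$ be the continuous projection $\pi(f)(k) = f(2k+1)$ extracting Player~II's moves. The payoff set for Player~I equals $\pi^{-1}\bigl(\{g \in \omega^\omega : \range(g) \in \idealI\}\bigr)$; since $\idealI$ is Borel in $2^\omega$ and the range map is Borel measurable, this payoff set is Borel in $[G]$. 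Martin's Borel Determinacy then yields that exactly one of the two players has a winning strategy.

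Combining the two steps, exactly one of conditions (1) and (2) of the dichotomy holds, which gives the theorem. The main obstacle is the determinacy step: one must observe that although $\mathcal{F}$ is a priori only a filter on $\omega$ and enters the alphabet on Player~I's side, the payoff set depends only on Player~II's $\omega$-valued subsequence and on the Borel ideal $\idealI$, so Martin's theorem still applies. The remainder is routine bookkeeping with Theorems \ref{winningIBG} and \ref{winningIIBG}.
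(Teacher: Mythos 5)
Your proposal follows the paper's argument essentially step for step: identify conditions (1) and (2) with winning strategies for Player II and Player I respectively in $\mathfrak{G}(\mathcal{F}^*,\idealI)$ via Theorems \ref{winningIIBG} and \ref{winningIBG}, and then obtain determinacy of that game from Martin's theorem by observing that the payoff set is the preimage of a Borel set under the continuous map extracting Player II's moves, so that Borelness of $\idealI$ alone suffices even though $\mathcal{F}$ need not be definable. One small remark: what your argument (and the paper's) actually delivers is that \emph{exactly one} of (1) and (2) holds --- the intended dichotomy --- rather than the literal ``equivalence'' asserted in the printed statement.
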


\section{Acknowledgements}

    This paper is developed for the proceedings of the RIMS Set Theory Workshop 2024 \textit{Recent Developments in Axiomatic Set Theory} held at RIMS,  Kyoto University. The authors thank the organizer,   Masahiro Shioya and the co-organizer, Kenta Tsukuura  for allowing them to give a talk at the workshop and to submit a paper to this proceedings.

    The authors are grateful to J\"{o}rg Brendle, Osvaldo G\'uzman and Michael Hru\v{s}\'ak for their  helpful comments.
    This work was supported by JSPS KAKENHI Grant Number JP22J20021.

	\printbibliography
\end{document}